\numberwithin{equation}{section}
\theoremstyle{plain}
\newtheorem*{theorem*}{Theorem}
\newtheorem{theorem}{Theorem}
\numberwithin{theorem}{section}
\newtheorem{proposition}[theorem]{Proposition}
\newtheorem{lemma}[theorem]{Lemma}
\newtheorem{corollary}[theorem]{Corollary}
\theoremstyle{definition}
\newtheorem{definition}[theorem]{Definition}
\newtheorem{remark}[theorem]{Remark}
\newtheorem{example}[theorem]{Example}
\newcommand{\C}{\mathbb{C}}
\newcommand{\PP}{\mathbb{P}}
\newcommand{\R}{\mathbb{R}}
\newcommand{\arxiv}[1]{\href{http://arxiv.org/abs/#1}{{\tt arXiv:#1}}}
\newcommand*{\rom}[1]{\expandafter\@slowromancap\romannumeral #1@}
\date{}
\begin{document}

\author{Giorgio Ottaviani}
\address{Dipartimento di Matematica e Informatica ``Ulisse Dini'',  University of Florence, viale Morgagni 67/A, I-50134, Florence, Italy}
\email{giorgio.ottaviani@unifi.it}

\author{Luca Sodomaco}
\address{Department of Mathematics and Systems Analysis, Aalto University, Espoo, Finland}
\email{luca.sodomaco@aalto.fi}

\subjclass[2020]{14C17,14N07,14P05,15A72,58K05,90C26}

\title{The Distance Function from a Real Algebraic Variety}

\begin{abstract}For any (real) algebraic variety $X$ in a Euclidean space $V$ endowed with a nondegenerate quadratic form $q$, we introduce a polynomial $\mathrm{EDpoly}_{X,u}(t^2)$ which, for any $u\in V$, has among its roots the distance from $u$ to $X$. The degree of $\mathrm{EDpoly}_{X,u}$ is the {\em Euclidean Distance degree} of $X$.
We prove a duality property when $X$ is a projective variety, namely $\mathrm{EDpoly}_{X,u}(t^2)=\mathrm{EDpoly}_{X^\vee,u}(q(u)-t^2)$ where $X^\vee$ is the dual variety of $X$. When $X$ is transversal to the isotropic quadric $Q$, we prove that the ED polynomial of $X$ is monic and the zero locus of its lower term is $X\cup(X^\vee\cap Q)^\vee$. 
\end{abstract}

\maketitle

\section{Introduction}
Let $X_{\R}$ be a real algebraic variety in a Euclidean space $V_{\R}$ endowed with a positive definite quadratic form $q\colon V_{\R}\to\R$. Given a data point $u\in V_{\R}$, it is useful for many applications to look for the closest point $x\in X_{\R}$ from $u$. To achieve this goal, we consider the {\it squared Euclidean distance} from $u$ to $X_\R$
\[
t^2(u)\coloneqq \min_{x\in X_{\R}}q(u-x)\,.
\]
Since $X_\R$ is defined by polynomial equations, it turns out that the distance function $t(u)$ depends algebraically on $u$. 

The starting point of this paper is that $t$ satisfies an equation of the form
\begin{equation}\label{eq:algfunction}
\sum_{i=0}^kp_i(u)t^{2i}=0\,,
\end{equation}
where $p_i(u)$ are polynomials on $V_{\R}$. Indeed, the knowledge of (\ref{eq:algfunction}) furnishes several metric properties of $X_\R$ which are not apparent from its equations. On the one hand, the explicit determination of (\ref{eq:algfunction}) with computer algebra systems requires elimination theory and is expensive even in basic examples. On the other hand, it is possible to describe a priori some properties of (\ref{eq:algfunction}). First, the degree in $t^2$ of (\ref{eq:algfunction}), which was called {\em Euclidean Distance degree} in \cite{DHOST}, measures the complexity of computing the distance from $X_\R$. In this paper, we go further in this investigation and we describe completely the geometry of the extreme coefficients $p_0(u)$ and $p_k(u)$ in (\ref{eq:algfunction}).

For example, let $X_\R\subset M_{m,n,\R}$ be the variety of real $m\times n$ matrices of corank one, with $m\le n$. The matrix space $M_{m,n,\R}$ is endowed with the $L_2$-form $q(U)=\mathrm{tr}(UU^\mathsmaller{T})$ for any $U\in M_{m,n,\R}$. In this case, (\ref{eq:algfunction}) reduces to
\begin{equation}\label{eq: matrix case algebraic rel}
\det\left(t^2 I_m-UU^\mathsmaller{T}\right)=0\,.
\end{equation}
Note that the polynomial in \eqref{eq: matrix case algebraic rel} is monic in $t$ and its lowest coefficient is $p_0(U)=\det(UU^\mathsmaller{T})$, which is a sum of squares by the Cauchy-Binet formula and vanishes (on the real numbers) exactly on $X_\R$. In the square case $m=n$, the coefficient $p_0(U)$ simplifies to $\det(U)^2$. The roots of \eqref{eq: matrix case algebraic rel} are all real and correspond to the squared singular values of $U$. This example shows another basic feature of the problem. More precisely, the distance from $U$ to $X_\R$ is computed by the minimum singular value, but this cannot be considered alone without referring to the totality of singular values. Indeed, in the algebraic closure of the coefficients of $U$, the singular values are permuted by each other when $U$ moves along a loop.

More generally, the roots of (\ref{eq:algfunction}) may be complex. For this reason, in this paper we consider the complex vector space $V\coloneqq V_\R\otimes\C$ and define $X$ to be the complex zero set of the equations of $X_\R$.
 
Our description generalizes the fact that \eqref{eq:algfunction} is monic in $t$ to any variety $X$, with mild transversality assumptions, see Corollary \ref{cor: degree constant term}. For most varieties $X$ an additional factor appears in the lowest term, see Corollary \ref{cor:constterm for cones}.
The polynomial on the left-hand side of (\ref{eq:algfunction}) defines an algebraic extension of the field of rational functions. Conversely, such algebraic extension allows to recover $X$. When (\ref{eq:algfunction}) is not monic in $t$, the algebraic function $t$ is not {\it integral}. This means geometrically that a root goes to infinity. 
An interesting example is when $X$ is a cone with vertex in the origin, corresponding to a projective variety of dimension $m$, like in the matrix example mentioned above. When $X$ is smooth and transversal to the isotropic quadric $Q$, a formula for the {\it degree} $k$ of the polynomial in (\ref{eq:algfunction}) (the ED degree in \cite{DHOST} denoted by $\mathrm{EDdegree}(X)$) was proved in 2000 by Catanese and Trifogli \cite{CT} and it is
\begin{equation}\label{eq:ct}
\sum_{i=0}^{m}(-1)^i(2^{m+1-i}-1)\deg c_i(X)\,.
\end{equation}
In \cite{DHOST} it was proved that $\mathrm{EDdegree}(X)=\mathrm{EDdegree}(X^\vee)$,
where $X^\vee$ is the projective dual of $X$. For every $u\in V$, the (complexified) distance from $u$ to $X$ is one of the roots of the polynomial in (\ref{eq:algfunction}), while the other roots correspond to the values assumed on $X$ by the other critical points of the distance function from $u$. Indeed, $\mathrm{EDdegree}(X)$ counts the number of these critical points. We call the left-hand side of (\ref{eq:algfunction}) the {\it ED polynomial} of $X$ and we denote it by $\mathrm{EDpoly}_{X,u}(t^2)$.

A formidable insight into the problem has been achieved by P. Aluffi in \cite[Prop. 2.9]{Alu}, who proved that the same formula (\ref{eq:ct})
holds allowing $X$ to be singular, when Chern classes $c_i(X)$ are replaced by Chern-Mather classes $c_i^\mathsmaller{M}(X)$, with the assumption on transversality refined by using a Whitney stratification of $X$. The work of Aluffi gives new tools to work with dual varieties and we report
a general consequence in Corollary \ref{cor:hypersurface}. A very explicit description of Chern-Mather classes in the toric case has been obtained by Helmer and Sturmfels in \cite{HS} and by Piene in \cite{Pie16}.

The content of the paper is the following.

In Section 2 we give the main definitions and show the first properties of the ED polynomial.

In Section 3 we show that the ED polynomial of a projective variety $X$ and its dual $X^\vee$ are linked by the following formula which enhances \cite[Theorem 5.2]{DHOST}.
\newtheorem*{thm:EDduality}{Theorem \ref{thm:EDduality}}
\begin{thm:EDduality}
{\it Let $X$ be a projective variety and $X^\vee$ its projective dual. Then for any data point $u\in V$
\[
\mathrm{EDpoly}_{X,u}(t^2)=\mathrm{EDpoly}_{X^\vee,u}(q(u)-t^2)\,.
\]
}
\end{thm:EDduality}
Theorem \ref{thm:EDduality} means that projective duality corresponds to variable reflection for the ED polynomial.

In Section 4, we recall the basics of Whitney stratifications and define transversality. We show that with the transversality condition, the ED polynomial is monic.
\newtheorem*{cor: degree constant term}{Corollary \ref{cor: degree constant term}}
\begin{cor: degree constant term}
{\it Let $X\subset\PP(V)$ be a projective variety. If $X$ is transversal to the isotropic quadric $Q$, then for any data point $u\in V$
\[
\mathrm{EDpoly}_{X,u}(t^2)=\sum_{i=0}^{d}p_i(u)t^{2i}\,,
\]
where $d=\mathrm{EDdegree}(X)$ and $p_i(u)$ is a homogeneous polynomial in the coordinates of $u$ of degree $2d-2i$. In particular the ED polynomial of $X$ is monic, up to a scalar factor.}
\end{cor: degree constant term}

In Section 5 we describe the lowest term of the ED polynomial.

\newtheorem*{cor:constterm for cones}{Corollary \ref{cor:constterm for cones}}
\begin{cor:constterm for cones}
{\it Let $X\subset\PP(V)$ be a projective variety. Assume that $X^\vee\cap Q$ is a reduced variety. Then the locus of zeroes $u\in V$ of $\mathrm{EDpoly}_{X,u}(0)$ is
\[
X\cup(X^\vee\cap Q)^\vee\,.
\]
In particular, at least one between $X$ and $(X^\vee\cap Q)^\vee$ is a hypersurface.}
\end{cor:constterm for cones}

In Sections 6 and 7 we recall Chern-Mather classes and we express the ED degree of a projective variety in terms of dual varieties, as in the following result.

\newtheorem*{thm:2EDdegree}{Theorem \ref{thm:2EDdegree}}
\begin{thm:2EDdegree}
{\it Assume that $X^\vee$ is transversal to $Q$. If $X$ is not a hypersurface, then 
\[
2\mathrm{EDdegree}(X)=2\mathrm{EDdegree}(X^\vee)=\mathrm{deg}((X^\vee\cap Q)^\vee)\,.
\]
Otherwise if $X$ is a hypersurface, then
\[
2\mathrm{EDdegree}(X)=2\mathrm{EDdegree}(X^\vee)=2\deg(X)+\mathrm{deg}((X^\vee\cap Q)^\vee)\,.
\]
In particular, $(X^\vee\cap Q)^\vee$ has always even degree.
}
\end{thm:2EDdegree}

In the projective case, Theorem \ref{thm:2EDdegree} leads us to a more precise description of the lowest term of the ED polynomial, with reasonable transversality assumptions. In particular, the factor corresponding to $(X^\vee\cap Q)^\vee$ is always present. If $X$ is a hypersurface, an additional factor corresponding to $X$ appears, see the following Theorem.

\begin{theorem*}
Let $X\subset\PP(V)$ be an irreducible variety and suppose that $X$ and $X^\vee$ are transversal to $Q$. Let $u\in V$ be a data point.
\begin{enumerate}
\item $\mathrm{[{\bf Theorem}\ \ref{thm:eqdual codim X greater than 1}]}$ If $\mathrm{codim}(X)\ge 2$, then $(X^\vee\cap Q)^\vee$ is a hypersurface and
\[
\mathrm{EDpoly}_{X,u}(0)=g
\]
up to a scalar factor, where $g$ is the equation of $(X^\vee\cap Q)^\vee$. Moreover $X\subset (X^\vee\cap Q)^\vee$. 
\item $\mathrm{[{\bf Theorem}\ \ref{thm:eqdual}]}$ If $X$ is a hypersurface, then
\[
\mathrm{EDpoly}_{X,u}(0)=f^2g
\]
up to a scalar factor, where $f$ is the equation of $X$ and $g$ is the equation of $(X^\vee\cap Q)^\vee$.
\end{enumerate}
\end{theorem*}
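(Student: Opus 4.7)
The plan is to combine three ingredients already at hand. By Corollary~\ref{cor: degree constant term}, the constant term $p_0(u):=\mathrm{EDpoly}_{X,u}(0)$ is a homogeneous polynomial in $u$ of degree $2d$ with $d=\mathrm{EDdegree}(X)$, and $\mathrm{EDpoly}_{X,u}(t^2)$ factors up to a scalar as $\prod_{i=1}^d(t^2-\tau_i(u))$, where $\tau_i$ are the squared critical values of the distance from $u$ to $X$. By Corollary~\ref{cor:constterm for cones}, the set-theoretic zero locus of $p_0$ is $X\cup(X^\vee\cap Q)^\vee$. By Theorem~\ref{thm:2EDdegree}, $2d=\deg((X^\vee\cap Q)^\vee)$ in case (1) and $2d=2\deg X+\deg((X^\vee\cap Q)^\vee)$ in case (2).

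For Part~(1) I would close the argument by dimension counting. Since the nonzero polynomial $p_0$ has zero locus pure of codimension $1$ and $X$ has codimension $\geq 2$, $X$ cannot be a component; being irreducible and contained in $X\cup(X^\vee\cap Q)^\vee$, it must lie inside some hypersurface component of the union, giving $X\subset (X^\vee\cap Q)^\vee$ and showing that $(X^\vee\cap Q)^\vee$ is a hypersurface. Reducedness of its equation $g$ follows from $X^\vee\pitchfork Q$, which forces $X^\vee\cap Q$ to be reduced, so the duals of its distinct irreducible components are distinct irreducible hypersurfaces in $V$ and their product equation $g$ is squarefree. The Nullstellensatz then yields $p_0=c\,g_1^{m_1}\cdots g_r^{m_r}$ with all $m_i\geq 1$; comparing degrees, $\sum m_i\deg g_i=2d=\sum\deg g_i$ forces each $m_i=1$, hence $p_0=c\,g$.

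For Part~(2) the same two corollaries yield $g\mid p_0$ and $\deg p_0=2\deg f+\deg g$, and the heart of the matter is to show that $f$ divides $p_0$ with multiplicity exactly $2$. I would analyze the Lagrange system
\[
u-x=\lambda\,\widetilde\nabla f(x),\qquad f(x)=0,
\]
where $\widetilde\nabla f$ denotes the $q$-gradient of $f$, at a general smooth point $x_0\in X$ for which $q(\widetilde\nabla f(x_0))\neq 0$; such points form a dense open of $X$, because $X^\vee\pitchfork Q$ implies $X^\vee\not\subset Q$, i.e.\ the Gauss image of $X$ is not contained in the isotropic quadric. The Jacobian of the system with respect to $(x,\lambda)$ at $(x_0,0)$ has determinant a nonzero multiple of $q(\widetilde\nabla f(x_0))$, so the implicit function theorem provides analytic branches $(x(u),\lambda(u))$ near $(x_0,0)$. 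A first-order expansion gives $\lambda(u)=f(u)/q(\widetilde\nabla f(x_0))+O(f^2)$, and therefore the nearby branch of the ED polynomial satisfies
\[
\tau_{\min}(u)=\lambda(u)^2\,q(\widetilde\nabla f(x(u)))=\frac{f(u)^2}{q(\widetilde\nabla f(x_0))}+O(f(u)^3).
\]
Thus $\tau_{\min}$ vanishes to order exactly $2$ along $X$, while the remaining $d-1$ branches $\tau_i$ correspond to critical points at bounded positive distance from $u$ and so do not vanish generically on $X$. Consequently $p_0=c\prod_i\tau_i$ vanishes to order exactly $2$ along $X$: $f^2\mid p_0$ while $f^3\nmid p_0$. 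Because $X$ is not a component of $(X^\vee\cap Q)^\vee$ (that would require $X^\vee\subset Q$, excluded by $X^\vee\pitchfork Q$), $f$ and $g$ are coprime, so $f^2g\mid p_0$, and the degree identity $\deg(f^2g)=2\deg f+\deg g=2d=\deg p_0$ forces $p_0=c\,f^2g$ up to a scalar.

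The main obstacle is the multiplicity analysis of Part~(2): the first-order Lagrange computation for $\tau_{\min}$ is straightforward given non-isotropy of the $q$-gradient, but one must also verify that no other branch $\tau_i$ collapses onto $X$ (so that the vanishing order stays exactly $2$) and that $\gcd(f,g)=1$. Both verifications depend on the transversality hypotheses $X\pitchfork Q$ and $X^\vee\pitchfork Q$, which together keep the relevant Lagrange correspondences reduced and geometrically generic.
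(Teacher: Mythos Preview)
Part~(1) of your argument coincides with the paper's: Corollary~\ref{cor:constterm for cones} gives the zero locus, Theorem~\ref{thm:2EDdegree} gives the degree, and the codimension hypothesis forces $X\subset(X^\vee\cap Q)^\vee$; you add the helpful detail on why $g$ is squarefree.

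For Part~(2) you take a genuinely different route. The paper's proof is a pure degree count: writing $p_0=f^h g^k$ with $h,k\ge 1$ from Corollary~\ref{cor:constterm for cones}, it asserts $h\ge 2$ (citing Corollary~\ref{cor: degree constant term} and Theorem~\ref{thm:2EDdegree}; the implicit reason is that monicity makes $p_0$ equal to $\pm$ the product of the squared critical distances, which has constant sign on $V_\R$, so the real factor $f$ must occur to an even power), after which the identity $h\deg f+k\deg g=2\deg f+\deg g$ forces $h=2$, $k=1$. You instead establish $f^2\parallel p_0$ directly by an implicit-function analysis of the Lagrange system near a generic smooth $x_0\in X$ with non-isotropic gradient: the nearest branch satisfies $\tau_{\min}=f^2\cdot(\text{unit})$, while the remaining branches stay nonzero on $X$ generically because $X$ is not a component of $(X^\vee\cap Q)^\vee$ (equivalently $X^\vee\not\subset Q$), whence $\gcd(f,g)=1$ and the degree identity finishes. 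Your approach makes the exponent $2$ geometrically transparent and does not rely on the real-sign trick; the paper's is shorter but leaves that step to the reader. One wording quibble: ``at bounded positive distance'' should read ``with nonzero $\tau_i$'', since an isotropic difference $u-x_i$ can give $\tau_i=0$ even when $x_i$ is far from $u$; but your coprimality argument already addresses exactly this possibility.
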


The transversality assumption is necessary in Corollary \ref{cor: degree constant term}, as it is shown by Example \ref{ex: ED poly dual Veronese} dealing with the Veronese variety, studied by the second author in \cite{Sod}. In this case the degree of $X^\vee$ is greater than $\mathrm{EDdegree}(X)$, so that a formula like in Theorem \ref{thm:2EDdegree} cannot hold. Section 7 closes with other considerations regarding tensor spaces.

Last Section 8 considers the case of matrices. In Theorem \ref{thm: EDpoly matrices} we interpret the polynomial $\det(UU^\mathsmaller{T}-t^2I_m)$ as ED polynomial of the variety of corank one matrices.
Example \ref{exa:ortoinvariant} regards orthogonally invariant matrix varieties studied in \cite{DLOT}.

\section{Definition and first properties}
In the following, we consider $x_1,\ldots,x_n$ as coordinates in $V$. We may assume that the quadratic form $q$ has the Euclidean expression $q(x)=\sum_{i=1}^nx_i^2$. By abuse of notation, we denote with the same letter the quadratic form $q$ and its associated bilinear form, in other words we have $q(x,x)=q(x)$. We denote by $Q$ the isotropic quadric with equation $q(x)=0$, whose only real point is the origin. The quadric $Q$ has no real points in the projective setting. We fix $I_X\coloneqq\langle f_1,\ldots,f_s\rangle\subset\R[x_1,\ldots, x_n]$ a radical ideal. Let $X_{\R}\subset V_{\R}$ be the real zero locus of $I_X$, a real algebraic variety of codimension $c$, and let $X\subset V$ be the complex zero locus of $I_X$. Often it is convenient to see $I_X$ in the larger ring $\C[x_1,\ldots, x_n]$, as the ideal generated by the same polynomials $f_i$.

We denote by $J(f)$ the $s\times n$ Jacobian matrix, whose entry in row $i$ and column $j$ is the partial derivative $\partial f_i/\partial x_j$. The singular locus $X_{\mathrm{sing}}$ of $X$ is defined by
\[
I_{X_{\mathrm{sing}}}\coloneqq I_X+\langle c\times c \mbox{ minors of } J(f)\rangle\,.
\]
The critical ideal $I_u\subset\C[x_1,\ldots, x_n]$ is defined as (see the formula (2.1) in \cite{DHOST})
\begin{equation}\label{eq: critical ideal}
I_u\coloneqq\left(I_{X}+\left\langle (c+1)\times(c+1)-\mbox{minors of}
\begin{pmatrix}
u-x\\
J(f)
\end{pmatrix}
\right\rangle\right):(I_{X_{\mathrm{sing}}})^{\infty}\,.
\end{equation}

\begin{lemma}{\cite[Lemma 2.1]{DHOST}}\label{lem:critical ideal}
For general $u\in V$, the variety of the critical ideal $I_u$ is finite. It consists precisely of the critical points of the squared distance function $d_u$ on the manifold $ X_{\mathrm{sm}}\coloneqq X\setminus X_{\mathrm{sing}}$.
\end{lemma}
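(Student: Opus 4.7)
The plan is to give a geometric interpretation of the $(c+1)$-minor condition via Lagrange multipliers, to explain carefully why the saturation removes only spurious components, and to deduce generic finiteness from a dimension count on an incidence variety.

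First I would recall that at a smooth point $x \in X_{\mathrm{sm}}$, the Jacobian $J(f)(x)$ has rank exactly $c$, and its rows form a basis of the conormal space; via the Euclidean form $q$ this is identified with the normal space $N_x X$. By the Lagrange multiplier criterion, $x$ is a critical point of $d_u(x)=q(u-x)$ restricted to $X_{\mathrm{sm}}$ precisely when $\nabla d_u(x) = -2(u-x)$ lies in $N_x X$, i.e.\ when the augmented matrix $\bigl(\begin{smallmatrix} u-x \\ J(f)(x) \end{smallmatrix}\bigr)$ still has rank $c$. Equivalently, all its $(c+1)\times(c+1)$ minors vanish. This identifies, on the open subset $X_{\mathrm{sm}}$, the vanishing locus of the critical system with the set of critical points of $d_u$.

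Second, at any point of $X_{\mathrm{sing}}$ the Jacobian already has rank strictly less than $c$, so the $(c+1)$-minor condition is automatically satisfied there. Consequently the scheme cut out by $I_X$ together with the $(c+1)$-minors always contains $X_{\mathrm{sing}}$ as a spurious component, in addition to the honest critical locus on $X_{\mathrm{sm}}$. Saturation by $(I_{X_{\mathrm{sing}}})^\infty$ removes exactly the components supported on $X_{\mathrm{sing}}$, leaving $V(I_u)$ equal to the closure of the critical points of $d_u$ on $X_{\mathrm{sm}}$, which is the second assertion.

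For generic finiteness I would introduce the incidence variety
\[
\mathcal{I} \coloneqq \overline{\{(x,u) \in X_{\mathrm{sm}} \times V \,:\, u - x \in N_x X\}} \subset X \times V.
\]
The first projection $\pi_1:\mathcal{I}\to X$ realises $\mathcal{I}$ generically as a rank-$c$ affine bundle over $X_{\mathrm{sm}}$ (the fibre over $x$ being the $c$-dimensional affine normal space $x + N_x X$), so $\dim\mathcal{I} = (n-c)+c = n = \dim V$. The second projection $\pi_2:\mathcal{I}\to V$ is dominant, since for $x \in X_{\mathrm{sm}}$ the pair $(x,x)$ lies in $\mathcal{I}$ and the differential of $\pi_2$ there is surjective. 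Hence $\pi_2$ is generically finite, and for $u$ outside a proper Zariski-closed subset of $V$ the fibre $\pi_2^{-1}(u)$ — which by the previous paragraph is $V(I_u)$ — is finite.

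The main obstacle is the saturation step: in principle the ideal $I_X + \langle(c+1)\text{-minors}\rangle$ may carry embedded components or non-reduced structure along $X_{\mathrm{sing}}$, and one needs to confirm both that no genuine critical point on $X_{\mathrm{sm}}$ is accidentally removed and that every component entirely contained in $X_{\mathrm{sing}}$ is killed. The former follows because critical points in $X_{\mathrm{sm}}$ are isolated from $X_{\mathrm{sing}}$ (for general $u$), so they are not in the vanishing locus of any element of $I_{X_{\mathrm{sing}}}$; the latter is the defining property of the $\infty$-saturation. Using the radicality of $I_X$ is what allows one to conclude the equality on the level of the underlying varieties, which is what the statement asserts.
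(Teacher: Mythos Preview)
The paper does not supply a proof of this lemma at all; it is quoted verbatim as \cite[Lemma 2.1]{DHOST} and used as a black box. So there is no ``paper's own proof'' to compare against here. That said, your argument is the standard one and is essentially what appears in \cite{DHOST}: interpret the $(c{+}1)$-minor condition via Lagrange multipliers on $X_{\mathrm{sm}}$, saturate to strip off the automatic component along $X_{\mathrm{sing}}$, and obtain generic finiteness from the equality $\dim\mathcal{I}=n$ for the incidence (ED-) correspondence.

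One small point deserves tightening. Your claim that $d\pi_2$ is surjective at $(x,x)$ uses implicitly that $T_xX\oplus N_xX=V$, i.e.\ that the restriction of $q$ to $T_xX$ is nondegenerate. Over $\C$ this can fail at special smooth points (those where $T_xX$ is tangent to $Q$), so the submersion argument as stated only works at a \emph{general} $x\in X_{\mathrm{sm}}$. This is harmless for the conclusion: you only need $\pi_2$ to be dominant, and surjectivity of $d\pi_2$ at one point suffices; alternatively, dominance is not even required for the bare finiteness statement, since an empty fibre is finite. Everything else in your outline is correct, including the description of $V(I_u)$ as the closure of the critical locus on $X_{\mathrm{sm}}$ and the observation that for general $u$ this closure adds nothing because the fibre is already finite.
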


\begin{definition}\label{def:psi}
Assume $X\not\subset Q$, which is satisfied if $X_{\R}$ is not empty. The ideal $I_u+(t^2-q(u-x))$ in the ring $\C[x_1,\ldots, x_n,u_1,\ldots, u_n,t]$ defines a variety of dimension $n$ (see \cite[Theorem 4.1]{DHOST}). Since the polynomial ring is a UFD, its projection (eliminating $x_i$) in $\C[u_1,\ldots, u_n,t]$ is generated by a single polynomial in $t^2$ . We denote this generator (defined up to a scalar factor) by $\mathrm{EDpoly}_{X,u}(t^2)$ and we call it the {\em Euclidean Distance polynomial (ED polynomial)} of $X$ at $u$. When $f_i$ have real (resp. rational) coefficients, the elimination procedure gives $\mathrm{EDpoly}_{X,u}(t^2)$
with real (resp. rational) coefficients.
\end{definition}

For any fixed $t\in \R$, the variety defined in $V_{\R}$ by the vanishing of $\mathrm{EDpoly}_{X,u}(t^2)$ is classically known as the {\it $t$-offset} hypersurface of $X_{\R}$, and has several engineering applications, starting from geometric modeling techniques. Farouki and Neff \cite{FN} studied algebraic properties of the $t$-offset in the setting of plane curves.
Recently Horobe\c{t} and Weinstein \cite{HW} have linked the subject to ED degree and to persistent homology and in \cite[Theorem 2.9]{HW} already showed that, in our notations, the degree in $t^2$ of $\mathrm{EDpoly}_{X,u}(t^2)$ equals $\mathrm{EDdegree}(X)$. In this paper we revive the original approach by Salmon \cite{Sal}, who called the offset curves {\it parallel curves}, considered $t$ as a variable and observed in \cite[\S 372, ex. 3]{Sal} that the parallel curve of a parabola drops degree with respect to a general conic, see Example \ref{ex: ED polynomial affine conic}.

\begin{figure}[ht]
\begin{tikzpicture}[line cap=round,line join=round,>=triangle 45,x=1.0cm,y=1.0cm, scale=0.6]
\clip(-7.5,-3) rectangle (7,7);
\draw [line width=1pt] (-2.,2.)-- (-2.0061010188881174,1.9253915519147102)-- (-2.0126412894781573,1.8349178087523623)-- (-2.018091514969857,1.7291834342132328)-- (-2.0213616502648772,1.6299893302641528)-- (-2.022451695363217,1.5318852714134141)-- (-2.023541740461557,1.4250608517759433)-- (-2.022451695363217,1.3280468380235462)-- (-2.020271605166537,1.212502057599343)-- (-2.018091514969857,1.098047322273481)-- (-2.0104611992814774,0.9857726771443025)-- (-2.0006507933964173,0.86368762613005)-- (-1.9875702522163372,0.7307021241323821)-- (-1.971565998789021,0.6045255139060233)-- (-1.9535937676221757,0.47682808193088827)-- (-1.9318379088412576,0.32737479117480434)-- (-1.9006229810251576,0.1703542451905643)-- (-1.871299867016094,0.029414116566156074);
\draw [line width=1pt] (-2.,2.)-- (-1.9915032644961579,2.084383751349717)-- (-1.9833271569059092,2.1736920034894816)-- (-1.9738931866094687,2.262371324276149)-- (-1.9606856281944516,2.3604846153592707)-- (-1.9462202070732426,2.4579689750892895)-- (-1.9317547859520336,2.5497929526414413)-- (-1.9166604334777284,2.6365854793688177)-- (-1.9009371496503273,2.7246358688023884)-- (-1.8799591206362682,2.8330274741138224)-- (-1.856891311291082,2.9452908129272193)-- (-1.8245963782078216,3.079084107129487)-- (-1.8015285688626357,3.175968906379405)-- (-1.7707714897357207,3.286694391236454)-- (-1.7338629947834232,3.3974198760935033)-- (-1.693878791918434,3.520448192601336)-- (-1.6600460048788277,3.6188708458076015)-- (-1.6246753638828757,3.7126799371448236)-- (-1.5800775991488494,3.8234054220018727)-- (-1.5324041265021315,3.9479715924660574)-- (-1.483192799899068,4.055621369410411)-- (-1.4401328891213871,4.147892606791285)-- (-1.387845854605632,4.247853113953899)-- (-1.3386345280025684,4.341662205291121)-- (-1.2894232013995048,4.437009150584692)-- (-1.2371361668837497,4.524666826096523)-- (-1.1833112784116488,4.615400209521049)-- (-1.131024243895894,4.699982177120184)-- (-1.0618208158603357,4.807631954064537)-- (-1.,4.9)-- (-0.9326410835272935,4.989098720913587)-- (-0.8803540490115384,5.0644535647746345)-- (-0.8126884749323259,5.150573386330117)-- (-0.7496364627221507,5.22900393810386)-- (-0.6327595620398746,5.367410794175172)-- (-0.5758589656550823,5.428924952429088)-- (-0.5158826613575985,5.496590526508396)-- (-0.46051991892915195,5.555028976849616)-- (-0.40054361463166815,5.61808098905988)-- (-0.3359537484651472,5.68420870918284)-- (-0.27751529812400916,5.739571451611365)-- (-0.21600113987017963,5.801085609865281)-- (-0.14064629600923848,5.868751183944589)-- (-0.06007250391701196,5.94316180229321)-- (-0.03801637879127646,5.962953579003855);
\draw [line width=1pt] (-0.03801637879127646,5.962953579003855)-- (-0.016667409910643367,5.983278631602183)-- (0.,6.);
\draw [line width=1pt] (0.,6.)-- (0.007440940058896936,6.006056733359961)-- (0.02369806458035484,6.018249576751073)-- (0.015815822388132825,6.012214735072644)-- (0.030471866464295633,6.024653898532262)-- (0.03823094862226417,6.031550860450466)-- (0.04697531105426047,6.039309942608445)-- (0.10654089183360715,6.092910903332557);
\draw [line width=1pt] (0.10654089183360715,6.092910903332557)-- (0.33600770332278884,6.293800992945117)-- (0.6469746574028202,6.568265175603173)-- (0.8864238567355238,6.781756386512943)-- (1.2016280018748116,7.072591807826762)-- (1.4834575904699323,7.348064338032917)-- (1.7475845185077155,7.617749970021379)-- (1.902982648729627,7.782253459436214)-- (2.042598156350875,7.9352234938736395)-- (2.1967822386804277,8.108225318534998)-- (2.3855666859422033,8.32311179548282);
\draw [line width=1pt] (-1.7805615966263373,-0.31883845947713346)-- (-1.7728575551123,-0.34329779264779836)-- (-1.7637310132304043,-0.37253997786127246)-- (-1.7551632392188286,-0.39824329989604584)-- (-1.7443603937259724,-0.43046558041789945)-- (-1.733930060146663,-0.4611978132855631)-- (-1.7249897742215405,-0.4855973436229204)-- (-1.7147456965990056,-0.5131632252254312)-- (-1.7029510756722306,-0.5450676309058872)-- (-1.6877385828668638,-0.5859893043239564)-- (-1.6760044575901407,-0.6152314895374305)-- (-1.6633544518415526,-0.6462810319357901)-- (-1.647303328447496,-0.687185507681995)-- (-1.6307344268794375,-0.7270544270801947)-- (-1.616236638007386,-0.7638166774343789)-- (-1.5981144019173221,-0.8062744877025915)-- (-1.5774032749572489,-0.8539100797108301)-- (-1.5539103713977513,-0.9079445221829706)-- (-1.53483945596237,-0.9493324663193912)-- (-1.5141454838941901,-0.9927492312468128)-- (-1.496697625091607,-1.0304852979594317)-- (-1.4715402472832315,-1.0828288743672576)-- (-1.4498371756034836,-1.1242580763242478)-- (-1.4241965694704295,-1.1725227466924206)-- (-1.3970476923883723,-1.22267275574685)-- (-1.3698988153063152,-1.2705603583777714)-- (-1.342749938224258,-1.3180708932714413)-- (-1.322809517759893,-1.351123675369965)-- (-1.3069438637870714,-1.3772758522482809)-- (-1.288462992126422,-1.4069149860437056)-- (-1.2698077726199175,-1.435682380609853)-- (-1.2492347268090058,-1.4672393407096875)-- (-1.2235704651450579,-1.5057488372600978)-- (-1.2036101240199142,-1.534682175771816)-- (-1.1823679261252658,-1.5645311262617532)-- (-1.157463280317747,-1.598775014247141)-- (-1.1292624313886463,-1.6366813501454094)-- (-1.107653988702711,-1.6645159542832648)-- (-1.086594913203706,-1.6912518240472576)-- (-1.063704613748266,-1.7198189177676881)-- (-1.0360531320060944,-1.7527809489835697)-- (-1.0062041815162008,-1.787574204155889)-- (-0.9820320252912561,-1.8150425635024572)-- (-0.9532818091752233,-1.8461733707619008)-- (-0.9263628170156257,-1.8749235868779752)-- (-0.8932176634041477,-1.9082518628851444)-- (-0.8664817936401936,-1.9349877326491371)-- (-0.8375484551285174,-1.962456091995705)-- (-0.8018395879780309,-1.9948687560246607)-- (-0.767412577597049,-2.0250839513058856)-- (-0.7403104630418079,-2.048157373157003)-- (-0.712260074903584,-2.070395205919518)-- (-0.6896442378245455,-2.0887168967177527)-- (-0.6690323356765611,-2.1047483761662082)-- (-0.6438400108290245,-2.1233563433831653)-- (-0.62,-2.14)-- (-0.5931690847152294,-2.158568342886023)-- (-0.569746277555396,-2.1742575033846205)-- (-0.5469682226226843,-2.1889005386985274)-- (-0.5247818054804327,-2.202951936221974)-- (-0.49386873092889544,-2.221144798278646)-- (-0.4659138453296584,-2.2371190186210903)-- (-0.44210042426364166,-2.250282959458845)-- (-0.41991400712139004,-2.2616719869252173)-- (-0.3925507593126129,-2.275575475001064)-- (-0.366814515427601,-2.287851959153128)-- (-0.34,-2.3)-- (-0.31741275992418744,-2.309742557400181)-- (-0.2953742522295508,-2.3190608525999403)-- (-0.2768855712776745,-2.326160506085471)-- (-0.25558661082111295,-2.333999706809078)-- (-0.2316252803074812,-2.342578454770761)-- (-0.21,-2.35)-- (-0.1893231782895881,-2.356481942846592)-- (-0.1615162021379661,-2.3649127813607937)-- (-0.13193431261496394,-2.3730478009796316)-- (-0.10338778922526687,-2.3799995450175473)-- (-0.07365799025464971,-2.386655470160233)-- (-0.04940084084578795,-2.391684391379151)-- (-0.029285155970146484,-2.3952342181219164)-- (-0.017181955492201847,-2.3970681960307965)-- (-0.00751229028756571,-2.3986985465594874)-- (0.,-2.4)-- (0.01246138989065485,-2.4016990969121728)-- (0.023423829207000698,-2.4030568302219972)-- (0.03343082656458246,-2.404263704275175)-- (0.04816474729710234,-2.4059231560982943)-- (0.06475926552826808,-2.4076328943402956)-- (0.0783365986264946,-2.4087391955557087)-- (0.09095848976595702,-2.409644351095592)-- (0.10398267225647802,-2.4104489337977104)-- (0.11861602015123444,-2.411152943662064)-- (0.13093619277740307,-2.4116558078508876)-- (0.14320607898468937,-2.412058099201947)-- (0.15633083431297515,-2.4122592448774767)-- (0.172573347611963,-2.412309531296359)-- (0.19052559915295136,-2.4121586720397117)-- (0.20968472474711544,-2.4117060942697703)-- (0.22617867014051654,-2.4110020844054167)-- (0.245,-2.41)-- (0.26383047326146847,-2.408524885856892)-- (0.28197433832250585,-2.407129203929118)-- (0.3023668020449538,-2.404880605267704)-- (0.3250078644288124,-2.402011703527279)-- (0.35960070644029346,-2.396862706207764)-- (0.38908959088648465,-2.391446380493149)-- (0.4251984289838615,-2.384024008217566)-- (0.4619090810495281,-2.3751974033493046)-- (0.5018294076127393,-2.3641641472639776)-- (0.5493727111076189,-2.3489181934006176)-- (0.5977184332268834,-2.3314655883201856)-- (0.6462647600022458,-2.311405122710501)-- (0.6982213659312491,-2.2873325639788793)-- (0.7455640647700321,-2.2630594005911604)-- (0.795314019481956,-2.234974748737602)-- (0.8524863464694681,-2.199267119952363)-- (0.908053836208208,-2.161352839950059)-- (0.9700406749420382,-2.114411350423402)-- (1.0234015134637162,-2.070679535394289)-- (1.0693399797098224,-2.0305586041749195)-- (1.1168832832047029,-1.9850213472409217)-- (1.1670344472288365,-1.935070787872807)-- (1.1955203083945443,-1.9045788801460863)-- (1.219793471782225,-1.8768954376047216)-- (1.2518939974923946,-1.8399614077289006)-- (1.2810538404037113,-1.8048615968170758)-- (1.308053694951227,-1.7719217742690554)-- (1.3369435393170686,-1.734661974993426)-- (1.3596234171369819,-1.704152139354686)-- (1.3836532876842706,-1.6720223124430922)-- (1.40876315241346,-1.6366525029857917)-- (1.4333330200516994,-1.6015526920739669)-- (1.4519629196894852,-1.5742828389809336)-- (1.4714028149636964,-1.5448529975240959)-- (1.490032714601482,-1.5154231560672582)-- (1.5092880260065173,-1.4859001832953334)-- (1.5,-1.5)-- (1.520389210688688,-1.4680023549301733)-- (1.5360214911594996,-1.4419485541454464)-- (1.5625284015230498,-1.3989031441532889)-- (1.5867697639922793,-1.3592560560026175)-- (1.6108298694489156,-1.3198887199809326)-- (1.6427049500444193,-1.263638577753485)-- (1.675517533010379,-1.2087946890817236)-- (1.7036426041240589,-1.1591070634474783)-- (1.7284864169411427,-1.115981954406435)-- (1.7547364833139105,-1.069106835883562)-- (1.7833303056128185,-1.020825463805003)-- (1.813799132652641,-0.9702003358003002)-- (1.833486682432217,-0.9378565040195178)-- (1.8522367298413367,-0.906450174609193)-- (1.8705180260652288,-0.8787938546806978)-- (1.8892680734743486,-0.8511375347522028)-- (1.9070806185130125,-0.8239499660089364)-- (1.9248931635516764,-0.7976998996361275)-- (1.9435964809711592,-0.7707398139943259)-- (1.9633273325837743,-0.7437608944423406)-- (1.9832595194169262,-0.7169833101108924)-- (2.0019836949268517,-0.6940310949696444)-- (2.0186945182314138,-0.6734949024748499)-- (2.036009347197586,-0.6539653860827412)-- (2.0513108239583895,-0.6370532275576163)-- (2.072249686894226,-0.6145036828574496)-- (2.096208578138116,-0.589940785951911)-- (2.1298315599677817,-0.5581298211070334)-- (2.1630518713563682,-0.528936214129139)-- (2.1960708475244184,-0.5017559593566168)-- (2.2228484318558244,-0.48162243730289656)-- (2.2522433740542045,-0.46048223914648984)-- (2.2834503332374223,-0.43954337621062084)-- (2.309019906245607,-0.4234365585676447)-- (2.3353948201359396,-0.40813508180681735)-- (2.3625750749084196,-0.39283360504599)-- (2.391164676224658,-0.3771294578440883)-- (2.418344930997138,-0.36303599240648415)-- (2.450357231062503,-0.3477345156456568)-- (2.484584218553774,-0.3322317036642923)-- (2.524851262661152,-0.3147155394775556)-- (2.56572231243014,-0.2986087218345794)-- (2.6051840156553703,-0.2839112507353637)-- (2.653303133363687,-0.26659642176916437)-- (2.7133010290836794,-0.24686557015651858)-- (2.7765202883322626,-0.22753738898494724)-- (2.826048752584337,-0.21324258832680593)-- (2.866315796691715,-0.202370486417797)-- (2.9087975282249987,-0.1919010549498625)-- (2.977855508869151,-0.17438489076312597)-- (3.025571956136394,-0.16351278885411705)-- (3.0805364713429646,-0.15143267562188495)-- (3.159057207352351,-0.1351245227583716)-- (3.232343227628072,-0.12042705165915586)-- (3.3100586227553186,-0.10613225100101453)-- (3.3797206090610885,-0.09344813210717082)-- (3.4487785897052476,-0.08177068931601311)-- (3.512400519394947,-0.07150259306861582)-- (3.588505232757899,-0.05922114461584651)-- (3.6712540083985687,-0.046537025722002794)-- (3.74152000036595,-0.0360675942540683)-- (3.8115846571127645,-0.02620216844774541)-- (3.88547468304981,-0.015531401759273714)-- (3.956747351119875,-0.005867311173488026)-- (4.,0.)-- (4.056609620506188,0.007622148602504496)-- (4.1365397030593405,0.017487574408827387)-- (4.247676744795713,0.03198371028750592)-- (4.375725945057187,0.04748652226887046)-- (4.51223122458121,0.0633920046913094)-- (4.629408322933691,0.07688146446730193)-- (4.905640245510354,0.1070817475478822)-- (5.273133169494088,0.14559636797820494)-- (5.643348934558216,0.18261794448467142)-- (6.027535105851179,0.22033804130258067)-- (6.497639275451477,0.2657418615463603)-- (6.927229266988155,0.3048589989871551)-- (7.514684848910693,0.3579465426568051)-- (8.098716413217973,0.4088154119643845);
\draw [line width=1pt] (-1.7805615966263373,-0.31883845947713346)-- (-1.787266990469414,-0.297649414932973)-- (-1.792094874036429,-0.28102003820211285)-- (-1.7970568654803059,-0.26519530873242336)-- (-1.8015368323981797,-0.2511411586067161)-- (-1.8060334998111258,-0.23393377682782404)-- (-1.8083471699002662,-0.22511735359381138)-- (-1.811825653588652,-0.2124200627969742)-- (-1.8152451457855412,-0.1988649283971524)-- (-1.8188080757435487,-0.1835652879892094)-- (-1.8221687853342348,-0.16897890946545055)-- (-1.8254785874928212,-0.15449852502160932)-- (-1.8291019844924927,-0.1400193068223624)-- (-1.833100537387191,-0.1233586697610905)-- (-1.837432303023114,-0.10536518173491682)-- (-1.8414308559178123,-0.08803811919119403)-- (-1.8450804697244878,-0.07286501419684081)-- (-1.8487857481977452,-0.05804390030378533)-- (-1.8524910266710024,-0.044281437403090956)-- (-1.8561963051442598,-0.030518974502396573)-- (-1.859901583617517,-0.01622718610552164)-- (-1.8633467225857363,-0.002389405080179365)-- (-1.8666138055378099,0.010025510137722708)-- (-1.869322799897498,0.020402262952604503)-- (-1.871299867016094,0.029414116566156074);
\draw [line width=1.2pt,domain=-2.4:-1.55] plot(\x,{(--75.65635265383675--40.009787024954605*\x)/-4.906902163803071});
\draw [line width=1pt,domain=-1.965:-1.931] plot(\x-0.3,{(--75.65635265383675--40.009787024954605*\x)/-4.906902163803071});
\draw [line width=1.2pt,domain=-4.3:-2] plot(\x,{(-35.92415701027282-4.906902163803071*\x)/-40.009787024954605});
\draw [line width=1pt,domain=-2.21:-1.94] plot(\x,{((-35.92415701027282-4.906902163803071*\x)/-40.009787024954605)-0.3});
\draw [line width=1.2pt,dash pattern=on 2pt off 2pt] (-4.298836516034809,0.37066397775764676) circle (2.3448622476979537cm);
\begin{scriptsize}
\draw [fill=black] (-1.9714125577716364,0.6561051786759521) circle (2.5pt);
\draw[color=black] (1,6) node {{\large $X$}};
\draw[color=black] (-1.6,1) node {{\large $x$}};
\draw [fill=black] (-4.298836516034809,0.37066397775764676) circle (2.5pt);
\draw[color=black] (-4.1,0.8) node {{\large $u$}};
\draw[color=black] (-2.4,-2.4) node {{\large $T_xX$}};
\end{scriptsize}
\end{tikzpicture}
\caption{A critical point $x\in X$ for the distance function $d_u$ on $X_{\mathrm{sm}}$. The squared norm $t^2=q(u-x)$ is a root of $\mathrm{EDpoly}_{X,u}$.}\label{fig: critical points}
\end{figure}
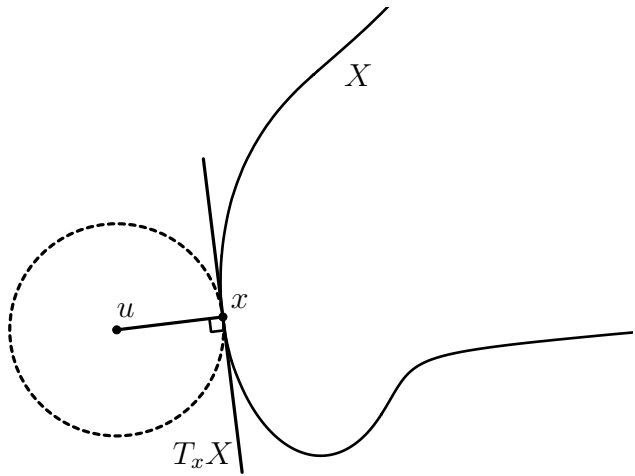

The following Proposition states that the distance function is a root of the ED polynomial and explains the title of this paper.

\begin{proposition}\label{pro:roots charpoly}
For general $u\in V$, the roots of $\mathrm{EDpoly}_{X,u}(t^2)$ are precisely of the form $t^2=q(u-x)$, where $x$ is a critical point of the squared distance function $d_u$ on the manifold $X_{\mathrm{sm}}$. In particular the distance $t$ from $X_{\R}$ to $u\in V_{\R}$ is a root of $\mathrm{EDpoly}_{X,u}(t^2)$.
\end{proposition}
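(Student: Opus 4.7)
The plan is to read the three statements off the elimination-theoretic definition of $\mathrm{EDpoly}_{X,u}(t^2)$ combined with Lemma~\ref{lem:critical ideal} and the positivity of $q$ on $V_\R$. Let $W$ be the variety in $(x,u,t)$-space cut out by $I_u+(t^2-q(x-u))$. By Lemma~\ref{lem:critical ideal}, the fibre of $W$ over a general $u\in V$ is the finite set $\{(x,t):x\in\mathrm{Crit}(d_u|_{X_{\mathrm{sm}}}),\ t^2=q(x-u)\}$. Since $\mathrm{EDpoly}_{X,u}(t^2)$ generates the elimination ideal of $W$ with respect to $x$, its roots in $t^2$ for generic $u$ are exactly the critical values $q(x-u)$, which yields the first statement. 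For the distance claim, a nearest point $x_0\in X_\R$ to $u\in V_\R$ always exists since $X_\R$ is closed in $V_\R$; for generic $u$ we have $x_0\in X_{\mathrm{sm}}$, which makes $x_0$ a critical point of $d_u$ and $t=\mathrm{dist}(u,X_\R)$ one of the roots of the ED polynomial, and the non-generic case follows by perturbing $u$ and using the Zariski-closedness of $V(\mathrm{EDpoly}_{X,u}(t^2))\subset V\times\C$.

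For the \emph{if} direction of the moreover clause, let $u\in X_{\mathrm{sm}}$; then $x=u$ is trivially a critical point of $d_u|_{X_{\mathrm{sm}}}$ because the gradient $2(x-u)$ of $d_u$ vanishes at $x=u$, and $q(x-u)=0$. By the first part, $t=0$ is a root, so $p_0(u):=\mathrm{EDpoly}_{X,u}(0)=0$. Hence the polynomial $p_0\in\R[u_1,\ldots,u_n]$ vanishes on the Zariski-dense subset $X_{\mathrm{sm}}\subset X$, and therefore on all of $X$, in particular on $X_\R$.

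For the \emph{only if} direction, suppose $u\in V_\R$ with $p_0(u)=0$; via the elimination defining $W$, this means that the system $\{x\in V(I_u),\ q(x-u)=0\}$ admits a complex solution $x$. If such an $x$ can be chosen in $V_\R$, positive definiteness of $q$ on $V_\R$ forces $x=u$, hence $u\in X_\R$. The main obstacle I anticipate is to exclude the scenario where, for some $u\in V_\R\setminus X_\R$, only a non-real conjugate pair $(x,\bar x)$ of critical points satisfies $q(x-u)=0$: here one must exploit the fact that the isotropic quadric $Q$ carries no non-zero real points, so that the algebraic locus of real $u$'s admitting a critical direction in the isotropic cone of $q$ reduces on the real part to $X_\R$ itself.
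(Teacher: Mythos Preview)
Your treatment of the first claim, the distance claim, and the \emph{if} direction of the ``Moreover'' clause is correct and supplies the details behind the paper's one-line proof (which simply cites Lemma~\ref{lem:critical ideal} and Definition~\ref{def:psi}). One small wrinkle: when you invoke the first part for $u\in X_{\mathrm{sm}}$, such $u$ is not general in $V$; the clean fix is exactly your closure argument, phrased as: the point $(u,u,0)$ lies in the incidence variety $W$, hence $(u,0)$ lies in its image $V(\mathrm{EDpoly})$.

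For the \emph{only if} direction you correctly isolate the obstacle---a real $u\notin X_\R$ might admit a non-real critical point $x$ with $q(x-u)=0$---but the resolution you sketch does not go through, and in fact the assertion fails as literally stated. Take the real ellipse $X:\ x^2/4+y^2=1$ and its focus $u=(\sqrt{3},0)$. The point $p=(4/\sqrt{3},\,i/\sqrt{3})\in X_{\mathrm{sm}}$ is a critical point of $d_u$: the normal direction at $p$ is the isotropic vector $(1,i)$ and $u-p=(-1/\sqrt{3})(1,i)$, so $q(u-p)=0$. Hence $\mathrm{EDpoly}_{X,u}(0)=0$ although $u\notin X_\R$. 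This is consistent with Example~\ref{ex: ED polynomial affine conic}: there the factor $g$ in $c_0=f^2g$ is the product of the four isotropic tangent lines to the conic, and each focus lies on two of them (classically, the foci are precisely the intersections of conjugate pairs of isotropic tangents). Thus the real zero locus of the lowest term strictly contains $X_\R$ in general. The paper's proof does not address this point either; the ``only if'' should be read with an implicit genericity hypothesis on $u$, or weakened to the \emph{if} direction.
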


\begin{proof}
It is a consequence of Lemma \ref{lem:critical ideal} and Definition \ref{def:psi}.\qedhere
\end{proof}

The ED polynomial behaves well under union of varieties, as shown by the following.

\begin{proposition}\label{pro:charpoly union}
Assume $X=X_1\cup\cdots\cup X_r$ for some integer $r\geq 0$, where $X_i\subset V$ is a reduced variety for every $i\in\{1,\ldots,r\}$ and $X_i\neq X_j$ for every $i\neq j$. Then
\[
\mathrm{EDpoly}_{X,u}(t^2)=\prod_{i=1}^r\mathrm{EDpoly}_{X_i,u}(t^2)\,.
\]
\end{proposition}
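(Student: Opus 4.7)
The plan is to compare roots of both sides using Proposition~\ref{pro:roots charpoly}. Both $\mathrm{EDpoly}_{X,u}(t^2)$ and $\prod_{i=1}^r \mathrm{EDpoly}_{X_i,u}(t^2)$ are polynomials in $\C[u_1,\dots,u_n,t]$ that are even in $t$ and defined up to a nonzero scalar; hence it is enough to verify equality, up to scalar, for $u$ in a Zariski-dense open subset $U\subseteq V$.

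The geometric crux is the decomposition of the smooth locus
$$X_{\mathrm{sm}} \;=\; \bigsqcup_{i=1}^r \Bigl((X_i)_{\mathrm{sm}} \setminus \bigcup_{j\neq i} X_j\Bigr).$$
The inclusion $\supseteq$ holds because a smooth point of some $X_i$ that lies on no other component is a smooth point of $X$; the inclusion $\subseteq$ holds because any point of $X_i\cap X_j$ with $i\neq j$ is a singular point of $X$ (the local ring has at least two minimal primes at such a point). Disjointness is clear from the defining condition.

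Next I would use Lemma~\ref{lem:critical ideal}: for general $u$ the critical set of $d_u$ on each $(X_i)_{\mathrm{sm}}$ is finite. Shrinking $U$ further, these critical points avoid the proper closed subset $\bigcup_{j\neq i}(X_i\cap X_j)\subset X_i$, and the critical values $q(x-u)$ can be assumed pairwise distinct across all indices $i$. Combined with the decomposition above, this means the set of critical points of $d_u$ on $X_{\mathrm{sm}}$ is the disjoint union of the critical sets on the $(X_i)_{\mathrm{sm}}$, with pairwise distinct squared distances. By Proposition~\ref{pro:roots charpoly} applied to $X$ and to each $X_i$, the multisets of roots in $t^2$ of $\mathrm{EDpoly}_{X,u}(t^2)$ and of $\prod_i \mathrm{EDpoly}_{X_i,u}(t^2)$ therefore coincide and consist of simple roots. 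In particular $\mathrm{EDdegree}(X)=\sum_{i=1}^r\mathrm{EDdegree}(X_i)$, so both polynomials have the same degree in $t^2$ and thus agree up to a scalar for $u\in U$, hence as polynomials in $(u,t)$.

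The main obstacle is the smooth-locus decomposition together with the genericity statement ensuring that critical points of $d_u$ on the individual $(X_i)_{\mathrm{sm}}$ stay away from the pairwise intersections $X_i\cap X_j$; once these two points are settled, the remaining root and degree comparison is routine. The distinctness of the critical values across components can be arranged by a standard parameter-count argument, and is essentially forced by the finiteness provided by Lemma~\ref{lem:critical ideal} applied jointly to the (reducible) variety $X$.
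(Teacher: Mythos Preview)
Your argument is correct and follows the same approach as the paper: both rely on Proposition~\ref{pro:roots charpoly} after observing that, for general $u$, the critical locus of $d_u$ on $X_{\mathrm{sm}}$ is the disjoint union of the critical loci on the $(X_i)_{\mathrm{sm}}$. The paper states this decomposition in one line at the level of the critical ideal, whereas you spell out the smooth-locus decomposition, the genericity needed to avoid the pairwise intersections, and the distinctness of critical values; these are exactly the details implicit in the paper's sentence.
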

\begin{proof}
For general $u\in V$, the variety of the critical ideal $I_u$ in $V$ with respect to $X$ is the union of the varieties of $I_u$ with respect to its components $X_i$. The conclusion follows by Lemma \ref{pro:roots charpoly}.\qedhere
\end{proof}

Given an affine variety $X\subset V$, as the data point $u$ moves in $V$, the number of complex-valued critical points of $d_u$ remains constant and is equal to $\mathrm{EDdegree}(X)$. On the other hand, the number of real-valued critical points of $d_u$ is constant on the connected components of the complement of a hypersurface $\Sigma_X\subset V$, called the {\em ED discriminant} of $X$. For a precise definition, see \cite[Sections 4, 7]{DHOST}. In particular, if $u$ is close to $\Sigma_X$, then two distinct real (or complex conjugate) roots of $\mathrm{EDpoly}_{X,u}$ tend to coincide. This fact implies the following natural result, which appears essentially in \cite[Proposition 2.13]{HW}, where the discriminant of the ED polynomial was called {\em offset discriminant}. We denote by $\Delta_X(u)\coloneqq\Delta_{t^2}\mathrm{EDpoly}_{X,u}(t^2)$ the discriminant of the ED polynomial of $X$ at $u$, where the operator $\Delta_{t^2}$ computes the discriminant with respect to the variable $t^2$.

\begin{proposition}\label{prop:discriminants}
Given an affine variety $X\subset V$, let $f$ be the equation of the ED discriminant $\Sigma_X$. Then $f(u)$ divides $\Delta_X(u)$. Moreover, if $X$ is symmetric with respect to $s$ affine hyperplanes $L_1,\ldots,L_s$ of equations $l_1,\ldots,l_s$, then the product $l_1(u)\cdots l_s(u)$ divides $\Delta_X(u)$ as well.
\end{proposition}
\begin{proof} By definition of $\Sigma_X$, any point $u\in\Sigma_X$ satisfies $\Delta_X(u)=0$, since two roots in $t^2$ coincide, by Proposition \ref{pro:roots charpoly}.
Let $u\in L_i$ and let $x\in X_{\mathrm{sm}}\setminus L_i$ be a critical point of the distance function $d_u$ (there exists at least one). Denote by $y$ the reflection of $x$ with respect to $L_i$. 
In particular, $y\in X$ as well and $y$ is again a critical point of $d_u$. Since $q(u-x)=q(u-y)$, we have that $u$ is a zero of $\Delta_X(u)$.\qedhere
\end{proof}

\begin{remark}\label{rem:ED discr sphere}
If $X\subset V$ is symmetric to infinite affine hyperplanes of $V$, then there exist $p\in V$ and $r\in\C$ such that the hyperplanes of symmetry of $X$ are exactly the ones containing $p$ and $X$ is the sphere centered in $p$ of radius $r$. In this special case, $\Delta_X(u)$ coincides with the equation of $\Sigma_X$, which in turn is the sphere centered in $p$ of radius zero.
\end{remark}

\begin{remark}\label{rem:discriminants}
Salmon remarks in \cite[\S 372 ex. 3]{Sal} that when $X$ is an ellipse with symmetry axes $L_1$, $L_2$, with the notation of Proposition \ref{prop:discriminants}, $\Delta(u)=f^3l_1^2l_2^2$, of total degree $22$. Here $f$ is a sextic Lam\'e curve, the evolute of the ellipse. It should be interesting to compute in general the exponents occurring in the factorization of $\Delta(u)$. Already for plane cubics, another factor appears, containing points with two critical points at equal distance, not necessarily coming from symmetry.
\end{remark}

A quite important case occurs when $X$ is a cone with the origin as vertex, meaning that if $x\in X$ then $\lambda x\in X$ for all $\lambda\in\C$.
In this case $I_X$ is homogeneous and moreover all coefficients of $\mathrm{EDpoly}_{X,u}(t^2)$ are homogeneous in $u$.
To $X$ is associated a projective variety in $\PP(V)$. Conversely every projective variety in $\PP(V)$ corresponds to a cone in $V$.

\begin{definition}
Let $Z\subset\PP(V)$ be a projective variety. The \emph{ED polynomial} of $Z$ is by definition the ED polynomial of its affine cone in $V$.
\end{definition}

A special case occurs by adding a hyperplane ``at infinity'' at the affine space $V$. We denote by $H_{\infty}$ the projective space $\PP(V)\cong \PP^{n-1}$. In particular, $V\cup H_{\infty}\cong{\PP(\C\oplus V)}=\PP^n$, with coordinates $x_0,x_1,\ldots,x_{n}$, and $H_{\infty}\colon x_{0}=0$. Given $X\subset V$, we indicate by $\overline{X}$ its closure in the Zariski topology of $\PP^n$. Let $X_{\infty}\coloneqq\overline{X}\cap H_{\infty}$ and $Q_{\infty}\coloneqq\overline{Q}\cap H_{\infty}$, the projective varieties consisting of the points at infinity of $X$ and $Q$ respectively.

We know from \cite[Section 6]{DHOST} that in general the ED degree of an affine variety $X\subset V$ is not preserved under the operation of projective closure. Here we stress that the ED degree of $X$ is computed with respect to the quadratic form $q$, whereas the quadratic form $\overline{q}(x)\coloneqq x_0^2+x_1^2+\cdots+x_{n}^2$ is the one used to compute the ED degree of $\overline{X}$. Note that there are infinitely many possible choices of quadratic forms on $\PP^n$ that restrict to $q$ on $V$. This is one of the reasons why $\mathrm{EDdegree}(X)$ and $\mathrm{EDdegree}(\overline{X})$ are not related in general, see Example  \ref{ex: cardioid}.

A positive result is that, under some reasonable transversality assumptions, $\mathrm{EDdegree}(X)$ and $\mathrm{EDdegree}(\overline{X})$ can be related. In particular, in \cite[Lemma 6.7]{DHOST} it is shown that there is a bijection between the critical points of the distance function from the origin $(0,\ldots,0)$ on $X$ and the critical points of the distance function from $u_0\coloneqq(1,0,\ldots,0)$ on $\overline{X}$. This property has a natural interpretation in terms of ED polynomial, as the following result suggests.

\begin{proposition}
Assume that $\mathrm{EDdegree}(X)=\mathrm{EDdegree}(\overline{X})=r$ and that there are $r$ critical points of $d_0$ on X which satisfy $q(x)\neq -1$. Then, up to scalars,
\[
\mathrm{EDpoly}_{X,0}(t^2)=(1+t^2)^r\cdot\mathrm{EDpoly}_{\overline{X},u_0}\left(\frac{t^2}{1+t^2}\right)\,.
\]
\end{proposition}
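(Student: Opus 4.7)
The plan is to identify the roots of both sides using Proposition \ref{pro:roots charpoly}. By hypothesis, $X$ has exactly $r$ critical points $x_1,\dots,x_r \in X_{\mathrm{sm}}$ of $d_0$, all satisfying $q(x_i) \ne -1$, and Proposition \ref{pro:roots charpoly} identifies the $r$ roots (in $t^2$) of $\mathrm{EDpoly}_{X,0}(t^2)$ with the values $q(x_i)$. Similarly, the roots (in $s^2$) of $\mathrm{EDpoly}_{\overline{X},u_0}(s^2)$ are the squared distances $\overline{q}(u_0 - \overline{x}_j)$ as $\overline{x}_j$ ranges over critical points of $d_{u_0}$ on the smooth locus of the affine cone $C(\overline{X}) \subset \C\oplus V$.

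The heart of the argument is to construct an explicit bijection between these two sets of critical points and compute how the corresponding squared distances transform. At a smooth point $\overline{x} = (\overline{x}_0, \overline{x}_1, \dots, \overline{x}_n) \in C(\overline{X})$ with $\overline{x}_0 \ne 0$, the cone structure gives the decomposition
\[
T_{\overline{x}} C(\overline{X}) = \C\cdot \overline{x} \;\oplus\; \bigl(\{0\}\oplus \overline{x}_0 \cdot T_{x} X\bigr),
\]
where $x = (\overline{x}_1/\overline{x}_0,\dots,\overline{x}_n/\overline{x}_0) \in X$. Imposing the orthogonality of $u_0-\overline{x}$ to the horizontal summand reduces to $q(x,w)=0$ for all $w\in T_x X$, that is, $x$ is a critical point of $d_0$ on $X$; imposing it on the radial direction yields the Euler-type condition $\overline{q}(\overline{x})=\overline{x}_0$, which pins down $\overline{x}_0 = 1/(1+q(x))$. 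This produces the bijection
\[
x \longmapsto \overline{x} = \frac{1}{1+q(x)}(1,x_1,\dots,x_n),
\]
well-defined by the hypothesis $q(x_i)\ne -1$, and a direct computation then yields
\[
\overline{q}(u_0 - \overline{x}) = \frac{q(x)}{1+q(x)}.
\]

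With these ingredients the rest is a formal check: substituting $s^2 = t^2/(1+t^2)$ into the factored form of $\mathrm{EDpoly}_{\overline{X},u_0}$ and multiplying by $(1+t^2)^r$ gives
\[
(1+t^2)^r\prod_{i=1}^r\left(\frac{t^2}{1+t^2} - \frac{q(x_i)}{1+q(x_i)}\right) = \prod_{i=1}^r \frac{t^2 - q(x_i)}{1+q(x_i)},
\]
which coincides with $\mathrm{EDpoly}_{X,0}(t^2)$ up to the nonzero scalar $\prod_i (1+q(x_i))^{-1}$. The main obstacle is the bookkeeping in the tangent-space decomposition together with ensuring that every critical point of $d_{u_0}$ on $C(\overline{X})$ is captured by the bijection: this is precisely where the joint assumptions $\mathrm{EDdegree}(X)=\mathrm{EDdegree}(\overline{X})=r$ and $q(x_i)\ne -1$ enter, guaranteeing that no spurious critical points on $\overline{X}$ escape to $H_\infty$ (equivalently, to $X_\infty\cap Q_\infty$). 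Once this is in place, matching the roots and the scalar factor is purely mechanical.
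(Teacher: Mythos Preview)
Your proof is correct and follows essentially the same approach as the paper: identify the roots of both ED polynomials via Proposition~\ref{pro:roots charpoly}, use the bijection $x\mapsto\frac{1}{1+q(x)}(1,x)$ between critical points, compute the squared-distance transformation $\overline{q}(u_0-\overline{x})=q(x)/(1+q(x))$, and match factors. The only difference is that the paper outsources the bijection to \cite[Lemma~6.7]{DHOST}, whereas you re-derive it inline via the tangent-space decomposition of the cone---a harmless and arguably more self-contained variant of the same argument.
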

\begin{proof}
By \cite[Lemma 6.7]{DHOST}, the map
\[
x\mapsto\left(\frac{1}{1+q(x)},\frac{1}{1+q(x)}x\right)
\]
is a bijection between the critical points of $d_0$ on $X$ and the critical points of $d_{u_0}$ on $\overline{X}\setminus X_{\infty}$. Define $r=\mathrm{EDdegree(X)}$ and let $x_1,\ldots,x_r$ be the critical points of $d_0$ on $X$. By Proposition \ref{pro:roots charpoly}, the roots of $\mathrm{EDpoly}_{X,0}(t^2)$ are $t_i^2=q(x_i-0)=q(x_i)$, $i\in\{1,\ldots,r\}$. Let $\tilde{x}_i\in \overline{X}$ be the critical point of $d_{u_0}$ corresponding to $x_i$ for all $i\in\{1,\ldots,r\}$. Hence for every $i$, $\tilde{t}_i^2=\overline{q}(\tilde{x}_i-u_0)$ is a root of $\mathrm{EDpoly}_{\overline{X},u_0}(\tilde{t}^2)$. Then by hypothesis we have
\[
\mathrm{EDpoly}_{X,0}(t^2)=c\cdot(t^2-t_1^2)\cdots(t^2-t_r^2),\quad\mathrm{EDpoly}_{\overline{X},u_0}(\tilde{t}^2)=\tilde{c}\cdot(\tilde{t}^2-\tilde{t}_1^2)\cdots(\tilde{t}^2-\tilde{t}_r^2)
\]
for some scalars $c,\tilde{c}$. Moreover, the following equalities hold true:
\[
\tilde{t}_i^2=\overline{q}(\tilde{x}_i-u_0)=\overline{q}\left(\frac{1}{1+q(x_i)}-1,\frac{1}{1+q(x_i)}x_i\right)=\frac{t_i^2}{1+t_i^2},\quad i\in\{1,\ldots,r\}\,.
\]
From this it follows that
\[
\prod_{i=1}^r(\tilde{t}^2-\tilde{t}_i^2)=\prod_{i=1}^r\left(\frac{t^2}{1+t^2}-\frac{t_i^2}{1+t_i^2}\right)=\prod_{i=1}^r\frac{t^2-t_i^2}{(1+t^2)(1+t_i^2)}=\frac{c'}{(1+t^2)^{r}}\prod_{i=1}^r(t^2-t_i^2)\,,
\]
where $c'=\prod_{i=1}^r(1+t_i^2)$. From the last chain of equalities we obtain the desired identity.\qedhere
\end{proof}

Let $O(V)$ be the orthogonal group of linear transformations of $V$ which preserve $q$. Another reasonable property of the ED polynomial is its behavior under the isometry group $\mathrm{Isom}(V)$, where an isometry is the composition of a translation and an element of $O(V)$.  Fix $g\in\mathrm{Isom}(V)$ and consider the transformed variety $gX\coloneqq\{gx\mid x\in X\}$ of $X$. Then the identity $\mathrm{EDdegree}(gX)=\mathrm{EDdegree}(X)$ holds true. For any data point $u\in V$ and any critical point $x$ for $d_u$ we have that $g\cdot x\in gX$ is a critical point for $d_{g\cdot u}$. Moreover, we have the identity $q(g\cdot u-g\cdot x)=q(u-x)$. The immediate consequence in terms of ED polynomial is that
\[
\mathrm{EDpoly}_{gX,u}(t^2)=\mathrm{EDpoly}_{X,g^{-1}u}(t^2)\,.
\]
In the projective setting we can reduce to the subgroup of isometries which fix the origin, which is precisely $O(V)$.
\begin{proposition}
\noindent
\begin{enumerate}
\item Let $X\subset V$ be an affine variety. Let $G\subset\mathrm{Isom}(V)$ be a group that leaves $X$ invariant. Then the coefficients of $\mathrm{EDpoly}_{X,u}$ are $G$-invariant.
\item Let $X\subset\PP(V)$ be a projective variety. Let $G\subset O(V)$ be a group that leaves $X$ invariant. Then the coefficients of $\mathrm{EDpoly}_{X,u}$ are $G$-invariant.
\end{enumerate}
\end{proposition}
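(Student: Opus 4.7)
The plan is to derive both statements directly from the transformation identity established in the paragraph preceding the proposition, namely
\[
\mathrm{EDpoly}_{gX,u}(t^2) = \mathrm{EDpoly}_{X,\,g^{-1}u}(t^2), \qquad g\in\mathrm{Isom}(V),
\]
which was justified by the bijection $x\mapsto g\cdot x$ between critical points of $d_u$ on $X_{\mathrm{sm}}$ and critical points of $d_{g\cdot u}$ on $(gX)_{\mathrm{sm}}$, together with the identity $q(g\cdot u - g\cdot x)=q(u-x)$.

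For part (1), I would specialize this identity to elements $g\in G$, using the hypothesis $gX=X$ to replace the left-hand side by $\mathrm{EDpoly}_{X,u}(t^2)$. This gives
\[
\mathrm{EDpoly}_{X,u}(t^2) = \mathrm{EDpoly}_{X,\,g^{-1}u}(t^2)
\]
for every $g\in G$. Fixing a representative of the ED polynomial (which is defined up to a nonzero scalar by Definition \ref{def:psi}) and expanding $\mathrm{EDpoly}_{X,u}(t^2)=\sum_i p_i(u)t^{2i}$, comparison of the coefficients of $t^{2i}$ on the two sides yields $p_i(u)=p_i(g^{-1}u)$ for every $g\in G$, which is precisely the $G$-invariance of each $p_i$. (The scalar ambiguity drops out because it multiplies every coefficient uniformly and, crucially, is independent of $u$; alternatively one fixes a normalization once and for all.)

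For part (2), I would reduce to part (1). Since $O(V)$ is the subgroup of $\mathrm{Isom}(V)$ fixing the origin, any $g\in G\subset O(V)$ is in particular an isometry; moreover $g$ is linear and thus sends lines through the origin to lines through the origin, so the hypothesis that $G$ leaves the projective variety $X\subset\PP(V)$ invariant automatically implies that $G$ leaves the affine cone $C(X)\subset V$ invariant. Because the ED polynomial of a projective variety is by definition the ED polynomial of its affine cone, applying part (1) to $C(X)$ and to the group $G\subset\mathrm{Isom}(V)$ yields the desired $G$-invariance.

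There is essentially no serious obstacle in this argument: both statements are formal consequences of the transformation law derived just above the proposition. The only point requiring a moment of care is the scalar indeterminacy in the definition of $\mathrm{EDpoly}_{X,u}$, which is resolved either by a one-time normalization or by observing that a global $u$-independent rescaling affects all $p_i(u)$ uniformly and therefore preserves the property of each $p_i$ being $G$-invariant.
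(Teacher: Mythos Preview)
Your proposal is correct and follows essentially the same approach as the paper: both arguments rest on the identity $q(u-x)=q(g\cdot u-g\cdot x)$ for isometries $g$, which the paper phrases as ``the critical values of the distance function from $u$ coincide with the critical values of the distance function from $g\cdot u$'' and you phrase via the transformation law $\mathrm{EDpoly}_{gX,u}(t^2)=\mathrm{EDpoly}_{X,g^{-1}u}(t^2)$ already established in the preceding paragraph. Your reduction of part (2) to part (1) via the affine cone is exactly what the paper's ``the proof is the same in both cases'' amounts to.
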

\begin{proof}
The proof is the same in both cases. Let $g\in G$. Since $q(u-x)=q\left(g\cdot u-g\cdot x\right)$, the critical values of the distance function from $u$ coincide with the critical values of the distance function from $g\cdot u$.\qedhere
\end{proof}
Now consider the uniform scaling in $V$ with scale factor $c\in C$. Calling $cX$ the scaling of $X$, for any data point $u\in V$ and any critical point $x$ for $d_u$ we have that $cx\in cX$ is a critical point for $d_{cu}$. Moreover, $q(cu-cx)=c^2q(u-x)$ and this implies that
\[
\mathrm{EDpoly}_{cX,u}(c^2t^2)=\mathrm{EDpoly}_{X,c^{-1}u}(t^2)\,.
\] 
\begin{remark}
There are many meaningful examples with such a $G$-action. If $X$ is the variety of $m\times n$ matrices of bounded rank, the group $G=O(m)\times O(n)$ works. If $X$ is the variety of tensors of format $n_1\times\ldots\times n_k$ of bounded rank, the group $G=O(n_1)\times\ldots\times O(n_k)$ works. In these examples, $X$ and $Q$ are not transversal.
It should be interesting to study the intersection between $X$ and $Q$ when a positive dimensional group $G\subset O(V)$ acts on $X$.
\end{remark}

The simplest varieties to consider are affine subspaces.
\begin{proposition}[The ED polynomial of an affine subspace]\label{prop:affine} Let  $L\subset V_{\R}$ be an affine subspace and let $\pi_{L^\perp}$ be the orthogonal projection on $L^\perp$.
Then $\mathrm{EDdegree}(L)=1$ and for any data point $u\in V$ the ED polynomial of $L$ is
\[
\mathrm{EDpoly}_{L,u}(t^2)=t^2-q\left(\pi_{L^\perp}(u)\right)\,.
\]
\end{proposition}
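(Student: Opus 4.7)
The plan is to apply Lemma \ref{lem:critical ideal} and Proposition \ref{pro:roots charpoly} directly, since for an affine subspace the critical point analysis is very explicit. Write $L = p + W$ with $W \subset V_{\R}$ a linear subspace. Because $q$ is positive definite on $V_{\R}$, its restriction to $W_{\R}$ is also positive definite, hence the complexified form is nondegenerate on $W_{\C}$; this yields the orthogonal decomposition $V = W \oplus W^{\perp}$.

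First I would observe that $L$ is smooth everywhere, so $X_{\mathrm{sing}} = \emptyset$ and the critical ideal of Definition \ref{def:psi} describes the critical points of $d_u$ on all of $L$. A point $x \in L$ is critical for $d_u|_L$ precisely when $u - x \perp W$. Since the Hessian of $d_u|_L$ equals $2q|_W$ and is nondegenerate, there is a unique critical point, namely $x^{*} = p + \pi_W(u-p)$, so that $u - x^{*} = \pi_{W^{\perp}}(u-p)$, which (by the abuse of notation in the statement) is precisely $\pi_{L^{\perp}}(u)$. In particular $\mathrm{EDdegree}(L) = 1$.

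Next, by Proposition \ref{pro:roots charpoly}, the unique root of $\mathrm{EDpoly}_{L,u}(t^2)$ is
\[
t^{2} \;=\; q(u - x^{*}) \;=\; q\bigl(\pi_{L^{\perp}}(u)\bigr).
\]
Since the degree in $t^2$ equals $\mathrm{EDdegree}(L) = 1$, the ED polynomial must be of the form $\alpha(u) \cdot \bigl(t^2 - q(\pi_{L^{\perp}}(u))\bigr)$ for some scalar $\alpha(u)$. Normalizing to the monic representative (which is legitimate since the ED polynomial is only defined up to a scalar factor) yields the claimed identity.

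The argument contains essentially no obstacle: the only point to watch is that the implicit abuse of notation $\pi_{L^{\perp}}(u)$ refers to the component $u - \pi_L(u) \in W^{\perp}$ and is well-defined thanks to the nondegeneracy of $q|_W$, which in turn relies on the real positive-definite hypothesis. Once this is pointed out, the statement reduces to the classical observation that orthogonal projection onto an affine subspace realizes the unique (complex) critical point of the squared distance function.
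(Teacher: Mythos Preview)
Your proof is correct and follows essentially the same approach as the paper: identify the unique critical point of $d_u$ on $L$ as the orthogonal projection $\pi_L(u)$, observe that $u-\pi_L(u)=\pi_{L^\perp}(u)$, and invoke Proposition~\ref{pro:roots charpoly}. The paper's proof is simply a terser version of what you wrote.
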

\begin{proof} The only critical point of $d_u$ from $L$ is $\pi_L(u)$. The equation follows from the identity $\pi_{L^\perp}(u)=u-\pi_{L}(u)$, see also \cite[Example 2.2]{DHOST}.\qedhere
\end{proof}

\begin{remark} Proposition \ref{prop:affine} extends to complex subspaces $L$ such that $L_{\infty}$ is transversal to $Q_{\infty}$.
\end{remark}

The case of linear subspaces (that is, affine subspaces containing the origin) is simpler and it is contained in next Corollary.
This will be generalized to any variety in Theorems \ref{thm:eqdual codim X greater than 1} and \ref{thm:eqdual}.
\begin{corollary}[The ED polynomial of a linear subspace]\label{cor:linear subspace}
Let $L\subset V$ be a linear subspace transversal to $Q$ (this is always the case if $L$ is the complexification of a real subspace).
\begin{enumerate}
\item If $\mathrm{codim}(L)\ge 2$, then the dual variety of $L^\perp\cap Q$ is the quadric hypersurface cut out by a polynomial $g$ and
\[
\mathrm{EDpoly}_{L,u}(t^2)=t^2-g(u)\,.
\]
\item If $L$ is the hyperplane cut out by a polynomial $f$, then 
\[
\mathrm{EDpoly}_{L,u}(t^2)=t^2-f^2(u)\,.
\]
\end{enumerate}
\end{corollary}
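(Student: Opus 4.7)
The plan is to deduce both claims from Proposition \ref{prop:affine} (extended to complex subspaces by the remark above), which asserts
\[
\mathrm{EDpoly}_{L,u}(t^2)=t^2-q(\pi_{L^\perp}(u)).
\]
The transversality hypothesis forces $q|_L$ to be non-degenerate, whence $V=L\oplus L^\perp$ and $q|_{L^\perp}$ is also non-degenerate. What remains is to identify $q(\pi_{L^\perp}(u))$ with $f(u)^2$ in case (2), and with the equation of $(L^\perp\cap Q)^\vee$ in case (1).

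\emph{Case (2).} Write $f(x)=q(a,x)$, so $L^\perp=\C a$; transversality amounts to $q(a)\neq 0$. A direct computation gives
\[
\pi_{L^\perp}(u)=\frac{q(a,u)}{q(a)}\,a,\qquad q(\pi_{L^\perp}(u))=\frac{f(u)^2}{q(a)}.
\]
Since $f$ is defined up to a nonzero scalar, rescaling yields $\mathrm{EDpoly}_{L,u}(t^2)=t^2-f(u)^2$.

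\emph{Case (1).} Set $W=L^\perp$ with $\dim W\geq 2$, so that $W\cap Q$ is a smooth quadric hypersurface in $\PP(W)$. The heart of the matter is to compute its projective dual \emph{inside} $\PP(V)$. Via the $q$-identification $V^*\cong V$, a point $u=u_L+u_W\in L\oplus W$ corresponds to the hyperplane $H_u=\{v:q(u,v)=0\}$, and this hyperplane is tangent to $W\cap Q$ at a smooth point $[a]\in W\cap Q$ iff it contains the affine tangent space $\widehat{T}_a(W\cap Q)=\{v\in W:q(a,v)=0\}$. Since $q(u_L,v)=0$ for all $v\in W$, the tangency condition reduces to: $q(u_W,v)=0$ whenever $v\in W$ and $q(a,v)=0$. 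Non-degeneracy of $q|_W$ then forces either $u_W=0$ or $u_W\in\C a$; in either case $q(u_W)=0$. Conversely, any $u$ with $u_W\neq 0$ and $q(u_W)=0$ arises this way (take $a=u_W$). The locus $\{u\in V:q(\pi_W(u))=0\}$ is already Zariski closed, so
\[
(W\cap Q)^\vee=\{u\in V:q(\pi_W(u))=0\},
\]
which is the quadric hypersurface cut out by $g(u):=q(\pi_W(u))$. Proposition \ref{prop:affine} then gives $\mathrm{EDpoly}_{L,u}(t^2)=t^2-g(u)$.

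The main subtlety is the dualization in case (1): because $W\cap Q$ lies in the proper projective subspace $\PP(W)\subsetneq\PP(V)$, the tangency condition at $[a]$ is insensitive to the component of $u$ along $L$ (the $L$-directions automatically pair to zero with $W$ under $q$). The orthogonal decomposition $V=L\oplus W$ is precisely what isolates this and reduces the computation to the standard dual of a smooth quadric inside its own ambient projective space.
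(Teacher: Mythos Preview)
Your proof is correct and follows essentially the same strategy as the paper: both start from Proposition~\ref{prop:affine} and then identify the quadratic form $q(\pi_{L^\perp}(u))$ with the defining equation of $(L^\perp\cap Q)^\vee$. The only stylistic difference is that the paper describes this dual variety geometrically as the join $J(L,\,L^\perp\cap Q)$ (a quadric cone with vertex $L$, whose equation is $q(\pi_{L^\perp}(u))$), whereas you compute the dual directly via a tangent-hyperplane argument; your computation in effect spells out what the paper leaves as ``straightforward to check.''
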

\begin{proof} In the following, we interpret $L$, $L^\perp$ and $Q$ as projective varieties of $\PP(V)$. For the notion of dual of a projective variety see Definition \ref{def: dual variety}. It is straightforward to check that the dual variety of $L^\perp\cap Q$ is the join between $L$ and $L^\perp\cap Q$ (see Definition \ref{def: join}), which is a quadric cone with vertex $L$, having equation $q\left(\pi_{L^\perp}(u)\right)$. If $L$ is a hyperplane then $L^\perp\cap Q=\emptyset$ and the quadric cone has rank one.
\end{proof}

\begin{example}
For example, if $L\subset\C^2$ is a point, then the quadric $Z_L$ cut out by $q\left(\pi_{L^\perp}(u)\right)$ is the circumference centered in $L$ of radius zero, namely the union of the lines joining $L$ and the points of $Q_{\infty}$ (the dashed lines in Figure \ref{fig: Z_L, L point}), where in this case $Q_{\infty}=I\cup J$, $I=[1,\sqrt{-1},0], J=[1,-\sqrt{-1},0]$.

More in general, the quadric $Z_L$ has a non-trivial description. We refer to Section \ref{sec: study of the constant term} for a complete study of the lowest term of the ED polynomial of an affine variety. Anyway, when restricting to the real points of $V$, the quadric $Z_L$ restricts to $L$.

\begin{figure}[ht]
\begin{tikzpicture}[line cap=round,line join=round,>=triangle 45,x=1.0cm,y=1.0cm, scale=0.35]
\clip(-9,-5) rectangle (8,8);
\draw [rotate around={27.6251450805318:(-0.21023924747911563,1.3512501369434549)},line width=0.8pt] (-0.21023924747911563,1.3512501369434549) ellipse (8.290629030682343cm and 4.296031611518877cm);
\draw [line width=0.8pt,domain=-1.:7.4] plot(\x,{(--53.53327647191173-7.626882326986989*\x)/6.453515815142847});
\draw [line width=0.8pt, dash pattern=on 2pt off 2pt, domain=-9.:8.] plot(\x,{(--2.075188399764412--1.037594199882206*\x)/8.141060512700282});
\draw [line width=0.8pt, dash pattern=on 2pt off 2pt, domain=-4.85:8.] plot(\x,{(--13.0427926595135--6.52139632975675*\x)/3.5009202489602753});
\draw [fill=black] (1.5009202489602753,6.52139632975675) circle (5pt);
\draw[color=black] (1.5,5.5) node {$I$};
\draw[color=black] (-5.2,4.7) node {$Q$};
\draw[color=black] (5.1,4) node {$H_{\infty}$};
\draw [fill=black] (-2.,0.) circle (5pt);
\draw[color=black] (-2.4,0.8) node {$L$};
\draw [fill=black] (6.141060512700281,1.037594199882206) circle (5pt);
\draw[color=black] (6.1,-0.1) node {$J$};
\draw[color=black] (-7,7) node {{$\PP^2$}};
\end{tikzpicture}
\caption{The dashed lines above form the quadric $Z_L$ when $L\subset\C^2$ is a point.}\label{fig: Z_L, L point}
\end{figure}
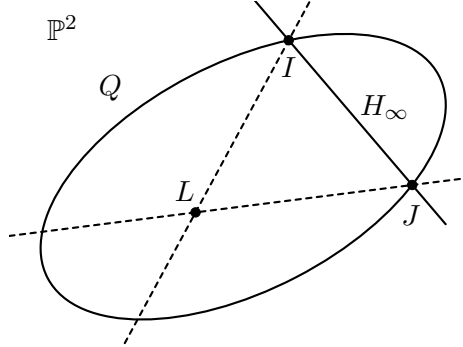
\end{example}

\begin{example}[The ED polynomial of an affine conic]\label{ex: ED polynomial affine conic}
Let $u=(u_1,u_2)\in\C^2$ be a data point and let $C\subset\R^2$ be the real affine conic of equation $ax^2+bxy+cy^2+dx+ey+f=0$. Denote by $A$ the $3\times 3$ symmetric matrix of $C$ and by $B$ the matrix of the circumference $(x-u_1)^2+(y-u_2)^2-t^2=0$. Salmon computes in elegant way $\mathrm{EDpoly}_{C,u}(t^2)=\Delta_{\lambda}\left(\det(A+\lambda B)\right)$ in \cite[\S 373, ex. 3]{Sal}, by invariant theory of pencils of conics.

According to Definition \ref{def:psi}, the following M2 code computes the ED polynomial of $C$ with respect to $u$:
\begin{verbatim}
R = QQ[x,y,a,b,c,d,e,f,u_1,u_2,t];
p = a*x^2+b*x*y+c*y^2+d*x+e*y+f; Ip = ideal p;
ISingp = Ip + ideal(diff(matrix{{x,y}},p));
J = minors(2,diff(matrix{{x,y}},p)||matrix{{x-u_1,y-u_2}});
distance = ideal((x-u_1)^2+(y-u_2)^2-t^2);
sat = saturate(Ip+J+distance,ISingp);
EDpoly = (eliminate({x,y},sat))_0
\end{verbatim}

The output is $\mathrm{EDpoly}_{C,u}(t^2)=c_4t^8+c_3t^6+c_2t^2+c_1t^2+c_0$. Note that this polynomial contains only even powers of $t$ and has degree 4 in $t^2$, according to the fact that the general plane conic $C$ is such that $\mathrm{EDdegree(C)}=4$. In particular, we display the two extreme coefficients of $\mathrm{EDpoly}_{C,u}$:
\begin{align*}
c_4 & = (b^{2}-4 a c)^{2} [(a-c)^{2}+b^{2}],\\
c_0 & = ({a u_1^{2}+b u_1 u_2+c u_2^{2}+d u_1+e u_2+f})^{2} g(u_1,u_2).
\end{align*}
Note that the first factor of $c_0$ is the square of the equation of $C$. Let us concentrate on the factor $g(u_1,u_2)$. As explained in more generality in Proposition \ref{pro: constterm in general}, the locus of zeros of $g$ is the union of the four lines tangent to $C$ and passing through the points of $Q_\infty=\{[0,1,\sqrt{-1}],[0,1,-\sqrt{-1}]\}$. We denote these lines by $L_1^+,L_2^+,L_1^-,L_2^-$, where
\[
L_1^+\cap L_2^+=[0,1,\sqrt{-1}],\quad L_1^-\cap L_2^-=[0,1,-\sqrt{-1}]\,.
\]
The foci of the conic $C$ are by definition the four pairwise intersections $L_1^+\cap L_1^-$, $L_1^+\cap L_2^-$, $L_2^+\cap L_1^-$ and $L_2^+\cap L_2^-$. If $C$ is either an ellipse or a hyperbola, then $C$ has two real foci. Otherwise if $C$ is either a parabola or a circumference, then $C$ has only one real focus. In the case of a circumference, the only real focus coincides with the center. In particular, there exist real solutions of the equation $c_0=0$ outside the real conic $C_\mathbb{R}$ (see also Remark \ref{rmk: real points outside X}).

Now consider the highest coefficients $c_4$. If both of the two factors in $c_4$ do not vanish, then either $C$ is an ellipse or an hyperbola, respectively when $b^2-4ac$ is positive or negative. Moreover, $b^2-4ac=0$ if and only if $C$ is a parabola, whereas $(a-c)^2+b^2=0$ if and only if (over $\R$) $C$ is a circumference:
\begin{enumerate}
\item If $C$ is a parabola then $\mathrm{EDdegree(C)}=3$. Rewriting the equation of $C$ as $(ax+by)^2+cx+dy+e=0$, we get that $\mathrm{EDpoly}_{C,u}(t^2)$ has degree 3 in $t^2$ and its leading coefficient is $-16(bc-ad)^{2} ({a^{2}+b^{2}})^{3}$. In particular, the condition $bc-ad=0$ forces $C$ to be the union of two distinct parallel lines.
\item If $C$ is a circumference, then $\mathrm{EDdegree(C)}=2$. Rewriting the equation of $C$ as $(x~-~a)^2+(y-b)^2-r^2=0$, we get that $\mathrm{EDpoly}_{C,u}(t^2)$ has degree 2 in $t^2$. In particular, it factors~as
\[
\mathrm{EDpoly}_{C,u}(t^2)=[(x-a)^2+(y-b)^2-(t+r)^2][(x-a)^2+(y-b)^2-(t-r)^2]\,.
\]
\end{enumerate}
\end{example}

\section{Projective duality corresponds to variable reflection in the ED polynomial}
In this section we restrict to the case when $X\subset V$ is an affine cone, hence $X$ can be regarded as a projective variety in $\PP(V)$. In this case there is a notion of duality which we recall in the following definition.

\begin{definition}\label{def: dual variety}
Given the above assumptions, we define the {\em dual variety} of $X$ as
\[
X^\vee\coloneqq\overline{\bigcup_{x\in X_{\mathrm{sm}}}N_xX}\,,
\]
where $N_xX\coloneqq\{y\in V\mid y\perp T_xX\}$ is the {\em normal space} of $X$ at the smooth point $x$.
\end{definition}

We remark that this definition is equivalent to the standard one as in \cite{Tev} identifying $V$ and its dual $V^\vee$ via the quadratic form $q$. The variety $X^\vee$ is an irreducible affine cone as well. Note that if $X$ is a linear subspace, then $X^\vee=X^\perp$, the orthogonal subspace with respect to $q$. A fundamental fact showed in \cite[Theorem 5.2]{DHOST} is that, for any data point $u\in V$, there is a bijection $x\to u-x$ from the critical points of $d_u$ on $X$ to the critical ponts of $d_u$ on $X^\vee$. The important consequence of this fact is that $\mathrm{EDdegree}(X)=\mathrm{EDdegree}(X^\vee)$. Moreover, the map is proximity-reversing, meaning that the closer a real critical point $x$ is to $u$, the further $u-x$ is from $u$.

This geometric property has a natural algebraic counterpart in the study of the ED polynomial of $X$, stated in the following proposition.
\begin{theorem}\label{thm:EDduality}
Let $X\subset V$ be an affine cone and $X^\vee$ its dual in $V$. Then
\[
\mathrm{EDpoly}_{X,u}(t^2)=\mathrm{EDpoly}_{X^\vee,u}(q(u)-t^2)\,.
\]
\end{theorem}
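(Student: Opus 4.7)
The plan is to compare the roots of the two polynomials, exploiting the bijection of critical points between $X$ and $X^\vee$ that is already recorded from \cite[Theorem 5.2]{DHOST}, together with Euler's identity for the cone $X$.

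First I fix a general data point $u\in V$. By Proposition \ref{pro:roots charpoly}, the roots (with multiplicity) of $\mathrm{EDpoly}_{X,u}(t^2)$ are exactly the values $t^2=q(x-u)$ as $x$ runs over the critical points of $d_u$ on $X_{\mathrm{sm}}$, and the same holds for $X^\vee$. The cited result of \cite{DHOST} gives a bijection $x\mapsto y\coloneqq u-x$ between the critical points of $d_u$ on $X$ and on $X^\vee$. So I need to show that if $x$ is a critical point for $X$ and $y=u-x$ is the corresponding critical point for $X^\vee$, then $q(y-u)=q(u)-q(x-u)$.

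The key step is the computation, which uses that $X$ is a cone. By Euler's identity, $x\in T_xX$ for every $x\in X_{\mathrm{sm}}$. The critical condition for $x$ says $u-x\perp T_xX$, so in particular $q(u-x,x)=0$, i.e., $q(u,x)=q(x,x)=q(x)$. Expanding,
\[
q(x-u)=q(x)-2q(u,x)+q(u)=q(u)-q(x).
\]
On the other hand $q(y-u)=q(-x)=q(x)$, hence $q(y-u)=q(u)-q(x-u)$. Setting $t^2=q(x-u)$ and $s^2=q(y-u)$, the bijection $x\leftrightarrow y$ matches roots via $s^2=q(u)-t^2$.

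It follows that every root of $\mathrm{EDpoly}_{X,u}(t^2)$ in $t^2$ is also a root of $\mathrm{EDpoly}_{X^\vee,u}(q(u)-t^2)$, viewed as a polynomial in $t^2$. For general $u$ the $\mathrm{EDdegree}(X)$ roots are distinct; since the two polynomials have the same degree in $t^2$ (namely $\mathrm{EDdegree}(X)=\mathrm{EDdegree}(X^\vee)$ by \cite[Theorem 5.2]{DHOST}), they agree up to a scalar, and since ED polynomials are only defined up to a scalar, the claimed identity holds for general $u$. Finally, by Zariski density (both sides are polynomials in $u$ and $t^2$), the identity extends to all $u\in V$.

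The only delicate point is the transition from ``same roots for general $u$'' to ``equal polynomials in $u,t$'': this is handled by observing that both sides are polynomials in $(u,t^2)$ and that the equality of their vanishing loci on a Zariski-dense open set of $u$ forces the equality of the corresponding univariate polynomials in $t^2$ coefficientwise as rational functions of $u$, hence as polynomials; the normalization ambiguity is absorbed into the ``up to scalar'' convention of Definition \ref{def:psi}.
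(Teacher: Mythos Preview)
Your proof is correct and follows essentially the same approach as the paper's: both use the bijection $x\mapsto u-x$ from \cite[Theorem~5.2]{DHOST} together with the orthogonality $q(u-x,x)=0$ to obtain $q(u)=q(u-x)+q(x)$, matching the roots of the two ED polynomials. The paper simply invokes ``the Pythagorean Theorem'' for this relation, whereas you spell out its source via Euler's identity ($x\in T_xX$ because $X$ is a cone) combined with the critical condition $u-x\perp T_xX$; you also make explicit the passage from ``same roots for general $u$'' to ``equal polynomials in $(u,t^2)$'', which the paper leaves implicit.
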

\begin{proof}
Consider a data point $u\in V$ and let $x$ be a critical point of $d_u$ on $X$. Then, by Proposition \ref{pro:roots charpoly}, $t_1^2=q(u-x)$ is a root of $\mathrm{EDpoly}_{X,u}$. Applying \cite[Theorem 5.2]{DHOST}, $u-x$ is a critical point of $d_u$ on $X^\vee$. Hence again by Proposition \ref{pro:roots charpoly}, $t_2^2=q(x)$ is a root of $\mathrm{EDpoly}_{X^\vee,u}$. Moreover, by the Pythagorean Theorem we have the equality $q(u)=t_1^2+t_2^2$, thus giving the desired result.\qedhere
\end{proof}

When $X$ is a hypersurface transversal to $Q$ (to be defined in Definition \ref{def:transversal}), after Theorem \ref{thm:eqdual}, the last result allows to write explicitly the equation of $X^\vee$ from $\mathrm{EDpoly}_{X,u}(t^2)$.

\section{The leading term of the ED polynomial}

In this section we study the leading term of the ED polynomial of an affine (resp. projective) variety $X$. We show that with transversality assumptions the leading term is scalar, in other terms the ED polynomial may be written as a monic polynomial, see Proposition \ref{pro:integrality distance function} (resp. Corollary \ref{cor: degree constant term}). In algebraic terms, this implies that the ED polynomial is an integral algebraic function.

We recall from \cite[\S 100]{VdW} that an algebraic function $f(t,u)=\sum_{k=0}^dt^kp_k(u)$ is called {\it integral} if the leading coefficient $p_d(u)$ is constant. The branches of a integral algebraic function are well defined everywhere. Otherwise, if $p_d(u)=0$, then one branch goes to infinity.

To express transversality, we need to recall the Whitney stratification of an algebraic variety. The following definitions are recalled from \cite[\S 4.2]{N}. In the following, $\overline{Y}$ is the Zariski closure of the quasi-projective variety $Y$, which coincides with the closure with respect to the Euclidean topology.
\begin{definition}
Let $X,Y\subset V$ be two disjoint smooth quasi-projective varieties. We say that the pair $(X,Y)$ satisfies the \emph{Whitney regularity condition (a)} at $x_0\in X\cap\overline{Y}$ if, for any sequence $y_n\in Y$ such that
\begin{enumerate}
\item $y_n\rightarrow x_0$,
\item the sequence of tangent spaces $T_{y_n}Y$ converges to the subspace $T$,
\end{enumerate}
we have $T_{x_0}X\subset T$. The pair $(X,Y)$ is said to satisfy the \emph{Whitney regularity condition (a) along $X$}, if it satisfies this condition at any $x\in X\cap\overline{Y}$.
\end{definition}
\begin{definition}
Suppose $X$ is a subset of $V$. A \emph{stratification} of $X$ is an increasing, finite filtration
\[
F_{-1}=\emptyset\subset F_0\subset F_1\subset\cdots\subset F_m=X
\]
satisfying the following properties:
\begin{enumerate}
\item $F_k$ is closed in $X$ for all $k$.
\item For every $k\in\{1,\ldots,m\}$ the set $X_k=F_k\setminus F_{k-1}$ is a smooth manifold of dimension $k$ with finitely many connected components called the $k$-dimensional \emph{strata} of the stratification.
\item (The frontier condition) For every $k\in\{1,\ldots,m\}$ we have $\overline{X}_k\setminus X_k\subset F_{k-1}$.
\end{enumerate}
The stratification is said to satisfy the Whitney condition $(a)$ if, for every $0\leq j<k\leq m$, the pair $(X_j,X_k)$ satisfies Whitney's regularity condition $(a)$ along $X_j$. Note that if $X$ is smooth then the trivial stratification $\emptyset\subset X$ satisfies the Whitney condition (a).
\end{definition}

We recall (see \cite{PP}) that any affine (or projective) variety admits a Whitney stratification, satisfying condition (a) and a stronger condition (b) that we do not use explicitly in this paper.

\begin{definition}\label{def:transversal} We say that a variety $X$ is {\em transversal} to a smooth variety $Y$ (in the applications we will have $Y=Q$)
when there  exists a Whitney stratification of $X$ such that each strata 
is transversal to $Y$.
\end{definition}

If $X$ is smooth and the schematic intersection $X\cap Y$ is smooth then
$X$ is transversal to $Y$ according to Definition \ref{def:transversal}.

If each strata of a Whitney stratification of $X$ is transversal to a smooth variety $Y$, then by \cite[Lemma 1.2]{PP} this stratification induces a Whitney stratification of $X\cap Y$.

\begin{proposition}\label{pro:integrality distance function}
Let $X\subset V$ be an affine variety. If $X_{\infty}$ is transversal to $ Q_{\infty}$, then the ED polynomial of $X$ is an integral algebraic function.
\end{proposition}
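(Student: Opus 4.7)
The plan is to show that the leading coefficient $p_d(u)$ of $\mathrm{EDpoly}_{X,u}(t^2)=\sum_{i=0}^{d}p_i(u)t^{2i}$, with $d=\mathrm{EDdegree}(X)$, is a nonzero constant, which is exactly the statement of integrality. Since $p_d\in\C[u_1,\ldots,u_n]$ and any non-constant polynomial over $\C^n$ has a zero in $\C^n=V$, it suffices to prove $p_d(u)\ne 0$ for every $u\in V$. By Proposition \ref{pro:roots charpoly}, the roots of $\mathrm{EDpoly}_{X,u}$ are the squared distances $q(x-u)$ attained at critical points $x\in X_{\mathrm{sm}}$ of $d_u$, so $p_d(u_0)=0$ means that along some holomorphic path $u\to u_0$ at least one root $t^2$ escapes to infinity. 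Since $u_0$ is finite, this forces the corresponding critical point $x=x(u)$ to diverge in $V$.

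I will argue by contradiction. Assume that sequences $u_n\to u_0\in V$ and $x_n\in X_{\mathrm{sm}}$ exist with $u_n-x_n\perp T_{x_n}X$, $\|x_n\|\to\infty$ and $t_n^2=q(u_n-x_n)\to\infty$. Extend the given Whitney stratification of $X_\infty$ to a Whitney stratification of $\overline X\subset\PP^n$ by adjoining the top stratum $X_{\mathrm{sm}}$. Passing to subsequences, $x_n$ converges in $\PP^n$ to a point $\xi\in X_\infty$, the normalized directions $\hat\xi_n=x_n/\|x_n\|$ converge to a representative $\hat\xi$ of $\xi$, and the Grassmannian point $[T_{x_n}X]$ converges to some linear subspace $T\subseteq V$. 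Let $S\subset X_\infty$ be the Whitney stratum through $\xi$; then Whitney's condition (a) applied to the pair $(S,X_{\mathrm{sm}})$ gives $T_\xi S\subseteq T$.

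The core of the argument is a two-step limit. First, since $x_n\in X$ and the affine tangent space $T_{x_n}X$ passes through $x_n$, its projective closure is the projective tangent $T_{x_n}\overline X$, which in the limit $x_n\to\xi$ becomes $T_\xi\overline X\ni\xi$; hence $\hat\xi\in T$. Secondly, dividing the critical equation $u_n-x_n\perp T_{x_n}X$ by $\|x_n\|$ and letting $n\to\infty$ yields $\hat\xi\perp T$ with respect to $q$. Combined, $\hat\xi\in T$ and $\hat\xi\perp T$ give $q(\hat\xi,\hat\xi)=q(\hat\xi)=0$, so $\xi\in Q_\infty$, while $T_\xi S\subseteq T$ together with $\hat\xi\perp T$ gives $T_\xi S\subseteq \hat\xi^{\perp}=T_\xi Q_\infty$ (the equality holding because $\xi$ is a smooth point of the quadric $Q_\infty$). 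Thus $S$ meets $Q_\infty$ non-transversally at $\xi$, contradicting the hypothesis.

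The main obstacle is the geometric justification of the first step of the limit, that the projectivization of the affine tangent subspace $T_{x_n}X$ converges to the projective tangent $T_\xi\overline X$ and hence contains $\hat\xi$; this is what transfers the critical orthogonality from $V$ to the point $\xi\in H_\infty$ and allows the conclusion $q(\hat\xi)=0$. Once both $\xi\in Q_\infty$ and $T_\xi S\subseteq T_\xi Q_\infty$ are in hand, the transversality assumption is violated, whence $p_d$ has no zero on $V$, is a nonzero constant, and $\mathrm{EDpoly}_{X,u}$ is monic up to a scalar factor, proving integrality.
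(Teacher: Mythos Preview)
Your proposal is correct and follows essentially the same strategy as the paper's proof: argue by contradiction, take a sequence of critical points $x_n$ diverging to a limit $\xi\in X_\infty$, show $\xi\in Q_\infty$ via the dichotomy $\hat\xi\in T$ and $\hat\xi\perp T$, and then use Whitney's condition (a) together with the orthogonality relation to force $T_\xi S\subseteq T_\xi Q_\infty$, contradicting transversality. The paper carries out the same limit argument in projective notation, writing $\langle x\rangle\in\lim(T_{x_k}X_1)_\infty$ and $\langle x\rangle\in[\lim(T_{x_k}X_1)_\infty]^\vee$ rather than $\hat\xi\in T$ and $\hat\xi\in T^\perp$, but the content is identical.

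Two small remarks on presentation. First, since you are working over $\C$ with the possibly isotropic form $q$, the norm $\|\cdot\|$ you use to normalize $x_n$ and to assert $\|x_n\|\to\infty$ must be an auxiliary Hermitian norm, not $\sqrt{q(\cdot)}$; this should be said explicitly. Second, your justification of the ``first step'' (that $\hat\xi\in T$) via ``the projective closure becomes $T_\xi\overline X$'' is imprecise: the limit of projective tangent spaces is some $\mathbb T$, not $T_\xi\overline X$ in general, and what is actually needed is $[0:\hat\xi]\in\mathbb T\cap H_\infty=\PP(T)$. The paper obtains this by invoking Whitney's condition (a) for the pair $(X_2,X_1)$ in $\overline X$ and then intersecting with $H_\infty$; your extension of the stratification by adjoining $X_{\mathrm{sm}}$ as top stratum already sets this up, so you can argue the same way rather than appealing to a smoothness of $\overline X$ at $\xi$ that has not been assumed.
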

\begin{proof} For any non-zero vector $y\in V$, we indicate by $\langle y\rangle\in\PP(V)$ the line spanned by $y$. If the ED polynomial is not an integral algebraic function, then there is a point $u\in V$ that annihilates the leading coefficient of $\mathrm{EDpoly}_{X,u}$. We get a sequence $\{u_k\}\subset V$ such that $u_k\to u$ and a corresponding sequence $\{x_k\}\subset X$ of critical points for $d_{u_k}$ such that $\mathrm{EDpoly}_{X,u_k}(t_k^2)=0$ when $t_k^2=q(x_k-u_k)$ diverges. In particular we have that $u_k-x_k\in \left[\left(T_{x_k}X\right)_\infty\right]^\vee$ for all $k$,
 where the dual is taken in the projective subspace $H_{\infty}$. Up to subsequences, we may assume that
\begin{equation}\label{eq:proposition integrality 1}
\lim_{k\to\infty}\langle x_k\rangle\eqqcolon\langle x\rangle\in X_{\infty}, \textrm{\ for some\ }x\in V\,.
\end{equation} 

We may assume there are two different strata $X_1$ and $X_2$ of $\overline{X}$ such that $x_k\in X_1$ for all $k$ and $\langle x\rangle\in X_2$. 
In the topology of the compact space $\overline{X}=X\cup X_\infty$ we still have $x_k\to \langle x\rangle\in X_{\infty}$, more precisely in $\PP(\C\oplus V)$
we have $\left[(1,x_k)\right]\to\left[(0, x)\right]$.
We may assume (up to subsequences) that $\{T_{x_k}X_1\}$ has a limit. Hence by Whitney condition $(a)$ we have $T_{\langle x\rangle}X_2\subset\lim_{k\to\infty} T_{x_k}X_1$.
From this and from (\ref{eq:proposition integrality 1}) we have immediately that
\begin{equation}\label{eq:proposition integrality 2}
\langle x\rangle\in T_{\langle x\rangle}(X_2)_{\infty}\subset\lim_{k\to\infty}(T_{x_k}X_1)_{\infty}\,.
\end{equation}
Since $\{x_k\}$ diverges we get
\[
\langle x\rangle= \lim_{k\to\infty}\langle u-x_k\rangle = \lim_{k\to\infty}\langle u_k-x_k\rangle\,.
\]
By construction, we have that $\langle u_k-x_k\rangle\in\left[(T_{x_k}X_1)_{\infty}\right]^\vee$ for all $k$.
This fact and relation (\ref{eq:proposition integrality 1}) imply that $\langle x\rangle=\lim_{k\to\infty}\langle u_k-x_k\rangle\in \lim_{k\to\infty}\left[(T_{x_k}X_1)_{\infty}\right]^\vee\subset
\left[\lim_{k\to\infty}(T_{x_k}X_1)_{\infty}\right]^\vee$.

In particular $\langle x\rangle\in \left[\lim_{k\to\infty}(T_{x_k}X_1)_{\infty}\right]\cap \left[\lim_{k\to\infty}(T_{x_k}X_1)_{\infty}\right]^\vee$, hence $\langle x\rangle\in Q_{\infty}$.

We now show that $T_{\langle x\rangle}\left(X_2\cap H_{\infty}\right)\subset T_{\langle x\rangle}Q_{\infty}$, where $T_{\langle x\rangle}Q_{\infty}=\{\langle y\rangle\in H_{\infty}\mid q(\langle x\rangle,\langle y\rangle)=0\}$.
Pick a non-zero vector $v\in V$ such that $\langle v\rangle\in T_{\langle x\rangle}\left(X_2\cap H_{\infty}\right)$. We show that $q(\langle x\rangle,
\langle v\rangle)=0$, where $q$ is the quadratic form in $H_\infty=\PP(V)$, which is the quadratic form defined on $V$.

Pick a sequence $\langle v_k\rangle\to\langle v\rangle$, where
$\langle v_k\rangle\in (T_{x_k}X_1)_{\infty}$.
Since $q(u_k-x_k,v_k)=0$, at the limit we get $q(\langle x\rangle,
\langle v\rangle)=0$,  this contradicts the transversality between $X_{\infty}$ and $ Q_{\infty}$, as we wanted.\qedhere
\end{proof}

\begin{remark}
The transversality conditions stated in Proposition \ref{pro:integrality distance function} are sufficient for the integrality of the distance function, but not necessary: for example, the parabola studied in Example \ref{ex: ED polynomial affine conic} is in general transversal to $Q_\infty$, but not transversal to $H_\infty$. Nevertheless, its ED polynomial is monic.

On the other hand, the cardioid studied in Example \ref{ex: cardioid} is singular at $Q_\infty$ (and consequently is not transversal to $H_\infty$), and its ED polynomial has a leading coefficient of positive degree. It should be interesting to find general necessary conditions for the integrality of the distance function. 
\end{remark}

\begin{corollary}\label{cor: degree constant term}
Let $X\subset \PP(V)$ be a projective variety. If $X$ is transversal to $Q$, then for any data point $u\in V$
\[
\mathrm{EDpoly}_{X,u}(t^2)=\sum_{i=0}^{d}p_i(u)t^{2i}\,,
\]
where $d=\mathrm{EDdegree}(X)$ and $p_i(u)$ is a homogeneous polynomial in the coordinates of $u$ of degree $2d-2i$. In particular, $p_d(u)=p_d\in\C$, $\mathrm{deg}(p_0)=2d$
and the ED polynomial of $X$ is an integral algebraic function.
\end{corollary}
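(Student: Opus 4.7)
The plan is to combine Proposition~\ref{pro:integrality distance function} with a scaling argument that is special to cones. The integrality of the ED polynomial will furnish the leading coefficient $p_d(u)\in\C^\ast$, while the $\C^\ast$-invariance of the cone will force each $p_i(u)$ to be homogeneous of the stated degree.

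\emph{Reducing to Proposition~\ref{pro:integrality distance function}.} Let $\tilde X\subset V$ be the affine cone on $X$, so that by definition $\mathrm{EDpoly}_{X,u}=\mathrm{EDpoly}_{\tilde X,u}$. Realize $V$ as the affine chart $\{x_0=1\}$ of $\PP^n=\PP(\C\oplus V)$. Because the ideal $I_{\tilde X}$ is homogeneous in $x_1,\ldots,x_n$ and involves no $x_0$, the projective closure $\overline{\tilde X}$ is cut out by the same equations, so $\tilde X_\infty=\overline{\tilde X}\cap H_\infty$ is canonically identified with the original projective variety $X\subset\PP(V)=H_\infty$; similarly $Q_\infty=Q$. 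Hence a Whitney stratification of $X$ whose strata are transversal to $Q$ witnesses transversality of $\tilde X_\infty$ to $Q_\infty$ in the sense of Definition~\ref{def:transversal}. Proposition~\ref{pro:integrality distance function} then gives that $\mathrm{EDpoly}_{\tilde X,u}$ is an integral algebraic function, i.e.\ $p_d(u)$ is a nonzero scalar.

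\emph{Homogeneity via cone scaling.} Since $c\tilde X=\tilde X$ for every $c\in\C^\ast$, the scaling identity $\mathrm{EDpoly}_{cX,u}(c^2t^2)=\mathrm{EDpoly}_{X,c^{-1}u}(t^2)$ (valid up to scalar) recorded earlier in the section specializes to
\[
\mathrm{EDpoly}_{\tilde X,u}(c^2t^2)=\beta(c)\,\mathrm{EDpoly}_{\tilde X,c^{-1}u}(t^2)
\]
for some scalar $\beta(c)$ depending only on $c$. Equating the coefficients of $t^{2i}$ gives $c^{2i}p_i(u)=\beta(c)\,p_i(c^{-1}u)$ for every $0\le i\le d$. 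Applied with $i=d$ and using that $p_d$ is already a nonzero constant, this forces $\beta(c)=c^{2d}$; substituting back yields $p_i(c^{-1}u)=c^{2i-2d}p_i(u)$, so that $p_i(u)$ is homogeneous of degree $2d-2i$ in the coordinates of $u$. Specializing to $i=0$ recovers $\deg p_0=2d$ and $i=d$ reconfirms that $p_d$ is a scalar.

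The one delicate point — and the one I would track carefully — is the overall factor $\beta(c)$: because $\mathrm{EDpoly}_{X,u}$ is only defined up to a global constant, the scaling identity must be read up to a $c$-dependent (but $u$- and $t$-independent) scalar. The integrality established in the first step is precisely what pins down $\beta(c)=c^{2d}$; without this, the same argument would only conclude that the degrees of the $p_i$'s form an arithmetic progression of common difference $2$, without fixing the exact values. Once this scalar bookkeeping is done, no further obstacle arises, as both Proposition~\ref{pro:integrality distance function} and the scaling identity are already available.
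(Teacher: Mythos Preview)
Your proof is correct and follows the paper's approach: the paper's own proof is the single line ``for affine cones, the assumption that $X$ is transversal to $Q$ is equivalent to $X_\infty$ transversal to $Q_\infty$,'' which is exactly your first step reducing to Proposition~\ref{pro:integrality distance function}. Your second step, the scaling argument pinning down $\deg p_i=2d-2i$, is more explicit than the paper, which only records earlier (without proof) that for cones the coefficients of $\mathrm{EDpoly}_{X,u}$ are homogeneous in $u$ and then tacitly combines this with integrality to read off the degrees; your careful bookkeeping with $\beta(c)$ makes this deduction transparent.
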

\begin{proof} For affine cones, the assumption that $X$ is transversal to $Q$ is equivalent to $X_\infty$ transversal to $Q_\infty$.\qedhere
\end{proof}

Another consequence of Proposition \ref{pro:integrality distance function} and \cite[Theorem 2.9]{HW}, valid for projective varieties, is the following.

\begin{corollary}
Let $X\subset \PP(V)$ be a projective variety. If $X$ is transversal to $Q$, then the degree of the $t$-offset of $X$, denoted by $\mathcal{O}_t(X)$, is
\[
\deg(\mathcal{O}_t(X))=2\mathrm{EDdegree}(X)\,.
\]
\end{corollary}

\section{The lowest term of the ED polynomial}\label{sec: study of the constant term}
In the last section we gave some geometric conditions on $X$ that affect the shape of its ED polynomial. In particular, we have obtained some useful information about the degrees of the coefficients of $\mathrm{EDpoly}_{X,u}$ for a general data point $u\in V$. In this section, our aim is to describe completely the locus of points $u\in V$ such that the lowest term of the ED polynomial of $X$ at $u$ vanishes. First we recall a definition.

\begin{definition}\label{def: join}
Let $X, Y\in\PP(V)$ be two disjoint projective varieties. The union
\[
J(X,Y)=\bigcup_{x\in X,\ y\in Y}\langle x,y\rangle\subset\PP(V)
\]
of the lines joining points of $X$ to points of $Y$ is again a projective variety (see \cite[Example 6.17]{H}) and is called the \emph{join of $X$ and $Y$}.
Note that $J(X,\emptyset)=X$.
\end{definition}

\begin{proposition}\label{pro: constterm in general}
Consider the variety $X\subset V$. Given any point $p\in X_{\mathrm{sm}}$, we consider $J_{X,p}=J(p,\left[(T_pX)_{\infty}\right]^\vee\cap Q_{\infty})$, where the dual of $(T_pX)_{\infty}$ is taken in $H_{\infty}$.
Then the zero locus $u\in V$ of $\mathrm{EDpoly}_{X,u}(0)$ is $J_{X}\cap V$, where
\[
J_X=\overline{\bigcup_{p\in X_{\mathrm{sm}}}J_{X,p}}\,.
\]
\end{proposition}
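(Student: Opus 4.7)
The plan is to translate the vanishing $\mathrm{EDpoly}_{X,u}(0)=0$ into a geometric incidence condition via Proposition~\ref{pro:roots charpoly}, to recognise that condition as membership of $u$ in $J_{X,p}\cap V$ for some $p\in X_{\mathrm{sm}}$, and then to pass to Zariski closure in $V$.

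By Proposition~\ref{pro:roots charpoly}, for a general data point $u\in V$ one has $\mathrm{EDpoly}_{X,u}(0)=0$ if and only if there exists $x\in X_{\mathrm{sm}}$ which is a critical point of $d_u$ and satisfies $q(u-x)=0$; that is, $u-x\perp T_xX$ and $q(u-x)=0$. If $u=x$ then $u\in J_{X,x}\cap V$ trivially. Otherwise $u-x$ is a non-zero isotropic vector orthogonal to $T_xX$, so the projective direction $\langle u-x\rangle$ lies in $[(T_xX)_\infty]^\vee\cap Q_\infty\subset H_\infty$; hence $u$ lies on the affine line through $x$ in that direction, which means $u\in J_{X,x}\cap V$. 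Conversely, any $u\in J_{X,p}\cap V$ different from $p$ has the form $u=p+v$ with $v\ne 0$, $v\perp T_pX$ and $q(v)=0$, making $p$ a critical point of $d_u$ at zero distance.

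Consequently, on a Zariski dense open subset $U\subset V$ of general data points one has the equality of sets
\[
V(\mathrm{EDpoly}_{X,\cdot}(0))\cap U \;=\; \Bigl(\bigcup_{p\in X_{\mathrm{sm}}}J_{X,p}\Bigr)\cap U.
\]
The right-hand side is a constructible subset of $V$ whose Zariski closure is $J_X\cap V$ by the very definition of $J_X$. The left-hand side is a closed subvariety of $V$. By Definition~\ref{def:psi}, $\mathrm{EDpoly}_{X,u}(t^2)$ generates the elimination ideal of $I_u+(t^2-q(x-u))$, so specialising $t=0$ identifies the zero locus of $\mathrm{EDpoly}_{X,\cdot}(0)$ in $V$ with the Zariski closure of the projection to $V$ of the incidence variety $\{(u,x)\in V\times X_{\mathrm{sm}} : u-x\perp T_xX,\ q(u-x)=0\}$, whose image is exactly $\bigcup_{p\in X_{\mathrm{sm}}}J_{X,p}$. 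Taking Zariski closures in the displayed equation then yields $V(\mathrm{EDpoly}_{X,\cdot}(0))=J_X\cap V$.

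The main obstacle is the final passage to closure: one has to rule out spurious irreducible components of $V(\mathrm{EDpoly}_{X,\cdot}(0))$ lying outside $J_X\cap V$, and analogously verify that the constructible set $\bigcup_p J_{X,p}$ is dense in $J_X\cap V$ after restriction to $U$. Both reduce to the standard observation that each irreducible component of a closed subvariety of $V$ meets the dense open subset $U$, so each side equals the closure of its intersection with $U$; combined with the equality of those intersections, this gives the desired identification.
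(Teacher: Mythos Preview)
Your proof is correct and follows essentially the same route as the paper's own argument: use Proposition~\ref{pro:roots charpoly} to translate $\mathrm{EDpoly}_{X,u}(0)=0$ for general $u$ into the existence of a critical point $p\in X_{\mathrm{sm}}$ with $q(u-p)=0$, recognise this as $u\in J_{X,p}\cap V$, and then pass to closure. The only stylistic difference is in the closure step: the paper shows $J_X\cap V\subset V(\mathrm{EDpoly}_{X,\cdot}(0))$ directly by taking a sequence $u_k\to u$ with $u_k\in J_{X,p_k}$ and using that the zero locus is closed, whereas you argue via equality on a dense open and then close up both sides; both are fine, and your remark that every component of the (hypersurface) zero locus meets the dense open $U$ is exactly what is needed to make your version go through.
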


\begin{proof}
The variety $\left[(T_pX)_{\infty}\right]^\vee\cap Q_\infty\subset H_\infty$ parametrizes all the directions in $V$ corresponding to isotropic vectors $w$ such that $w\in N_pX$. In particular, for all $p\in X_\mathrm{sm}$, the variety $J_{X,p}$ is the union of the lines passing from $p$ and generated by isotropic vectors $w$ such that $w\in N_pX$.

\medskip
Now pick a point $u\in J_X\cap V$. Then $u$ is a limit of points $u_k$, such that there exist $p_k\in X_\mathrm{sm}$ with $u_k\in J_{X,p_k}$. Hence $u_k=p_k+w_k$ for some vector $w_k\in V$ whose direction belongs to $\left[(T_{p_k}X)_\infty\right]^\vee\cap Q_\infty$. In particular $q\left(w_k\right)=q(u_k-p_k)=0$ and $w_k\in N_{p_k}X$, that is, $p_k$ is a critical point on $X$ of the squared distance function $d_{u_k}$. In particular, $u_k$ is a zero of $\mathrm{EDpoly}_{X,u}(0)$, hence $u$ is a zero of $\mathrm{EDpoly}_{X,u}(0)$ as we wanted.

\medskip
Conversely, let $u\in V$ be a general zero of $\mathrm{EDpoly}_{X,u}(0)$. Then there exist $p\in X_\mathrm{sm}$ such that $u-p\in N_pX$ and $0=q(u-p)$. Define $w=u-p$. Then the direction corresponding to $w$, when nonzero, is represented by a point of $\left[(T_pX)_{\infty}\right]^\vee\cap Q_{\infty}$, namely $u\in J_{X,p}\subset J_X\cap V$. When $w=0$, we have $u=p\in J_{X,p}$. Conclusion follows by taking closures.\qedhere
\end{proof}

\begin{remark}\label{rmk: real points outside X}
As showed in Example \ref{ex: ED polynomial affine conic}, the real foci of a real conic $C$ are zeros of the lowest coefficient of the ED polynomial of $C$. Therefore, given a real affine variety $X_\mathbb{R}\subset V_\mathbb{R}$, the hypersurface $J_X\cap V$ might contain real points which do not necessarily belong to $X_\mathbb{R}$. The reason is that there could exist real data points $u$ admitting (non real) critical points $x$ for the function $\delta_u$ such that $q(u-x)=0$. This is essentially the price to pay for describing the distance function with algebraic tools and, above all, for allowing non real solutions to the problem. For a concrete example, let $C_\mathbb{R}$ be the real ellipse of equation
\[
\frac{x_1^2}{a^2}+\frac{x_2^2}{b^2}-1=0,\quad a\ge b>0\,.
\]
The two foci of $C_\mathbb{R}$ are $u^\pm=(\pm c,0)$, where $c^2=a^2-b^2$. On the one hand, the only real critical points of $\delta_{u^+}$ and $\delta_{u^-}$ on $C_\mathbb{R}$ are the points $(\pm a,0)$. On the other hand, with straightforward computations one verifies that the non real points $z^\pm=\left(\frac{a^2}{c},\pm\sqrt{-1}\frac{b^2}{c}\right)$ are critical for $\delta_{u^+}$ on $C$, as well as $w^\pm=\left(-\frac{a^2}{c},\pm\sqrt{-1}\frac{b^2}{c}\right)$ are critical for $\delta_{u^-}$ on $C$. Note that, for instance $T_{z^\pm}C\colon x_1\pm\sqrt{-1}x_2-c=0$ and coincides with the normal space $N_{z^\pm}C$, with $u^+-z^\pm\in N_{z^\pm}C$. In addition, we have that $q(u^+-z^\pm)=q(u^--w^\pm)=0$, thus confirming the fact that $u^\pm$ are real points of $J_C\cap V$.
\end{remark}

\begin{proposition}\label{pro: JXV}
Let $X\subset V$ be an irreducible cone. Assume that $X^\vee\cap Q$ is a reduced variety. Then 
\[
J_X\cap V=X\cup(X^\vee\cap Q)^\vee\,.
\] 
\end{proposition}

\begin{proof}
Suppose that there exists a point $p\in X_\mathrm{sm}$ such that $u\in J_{X,p}$. If $u=p$, then in particular $u\in X$. If $u\neq p$, then $u-p$ is a nonzero element of $X^\vee\cap Q$. We show that necessarily $u\in (X^\vee\cap Q)^\vee$, where $X^\vee\cap Q$ is reduced by hypothesis. By definition, we have that $(X^\vee\cap Q)^\vee=\overline{S}$, where
\[
S = \bigcup_{z\in(X^\vee\cap Q)_\mathrm{sm}}N_z(X^\vee\cap Q)\,.
\]
On the one hand, by construction $p$ is critical for $\delta_u$ on $X$. On the other hand, since $u$ is general, $u-p$ is critical for $\delta_u$ on $X^\vee$ by \cite[Theorem 5.2]{DHOST}.

In order to prove that $u\in (X^\vee\cap Q)^\vee$, it remains to show that $u\in N_{u-p}(X^\vee\cap Q)$. Indeed, pick a vector $y\in T_{u-p}(X^\vee)$ such that $\langle y,u-p\rangle=0$. The condition that $u-p$ is critical for $\delta_u$ implies that $p$ is orthogonal to any tangent vector to $X^\vee$ at $u-p$, so we have $\langle y, p\rangle=0$. Then
\[
\langle y,u\rangle=\langle y,u-p\rangle+\langle y,p\rangle=0+0=0\,,
\]
thus proving our claim. By taking closures we get 
\[
J_X\cap V\subset X\cup(X^\vee\cap Q)^\vee\,.
\]
On the other hand, suppose that $u\in X\cup(X^\vee\cap Q)^\vee$. If $u\in X$, then clearly $u\in J_X$. Now assume that $u\in S$. Then there exists a smooth $z\in X^\vee\cap Q$ such that $u\in N_z(X^\vee\cap Q)$. In particular, $z$ is critical for $\delta_u$ on $X^\vee$. By \cite[Theorem 5.2]{DHOST}, $u-z$ is critical for $\delta_u$ on $X$. This means that $z$ is an element of $\left[(T_{u-z}X)_{\infty}\right]^\vee\cap Q_\infty$. In particular, $u\in J_{X,u-z}$. By taking the Zariski closure, we have that $ X\cup(X^\vee\cap Q)^\vee\subset J_X\cap V$.\qedhere
\end{proof}

\begin{corollary}\label{cor:constterm for cones}
Let $X\subset V$ be an affine cone such that $X^\vee\cap Q$ is a reduced variety. Then the locus of zeros $u\in V$ of $\mathrm{EDpoly}_{X,u}(0)$ is
\[
X\cup(X^\vee\cap Q)^\vee\,.
\]
In particular, at least one of $X$ and $(X^\vee\cap Q)^\vee$ is a hypersurface.
\end{corollary}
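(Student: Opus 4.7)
The plan is to obtain this corollary as an essentially formal consequence of the two preceding propositions. By Proposition \ref{pro: constterm in general}, the zero locus in $V$ of $\mathrm{EDpoly}_{X,u}(0)$ equals $J_X \cap V$, where $J_X$ is the closure of the union of the joins $J_{X,p} = J(p,\,[(T_pX)_\infty]^\vee \cap Q_\infty)$ over $p \in X_{\mathrm{sm}}$. Then Proposition \ref{prop:JXV}, which uses the reducedness hypothesis on $X^\vee \cap Q$ together with the bijection $x \mapsto u-x$ between critical points on $X$ and on $X^\vee$ (from \cite[Theorem 5.2]{DHOST}), identifies $J_X \cap V$ with $X \cup (X^\vee \cap Q)^\vee$. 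Chaining these two equalities immediately gives the first claim of the corollary.

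For the ``in particular'' statement, I would argue by a dimension count. First observe that $\mathrm{EDpoly}_{X,u}(0)$ is a nonzero polynomial in the coordinates of $u$: indeed, Proposition \ref{pro:roots charpoly} asserts that for $u \in V_\R$ one has $\mathrm{EDpoly}_{X,u}(0) = 0$ if and only if $u \in X_\R$, and this would force $X_\R = V_\R$ were the polynomial identically zero, contradicting that $X$ is a proper affine cone (recall $X \not\subset Q$ is assumed in Definition \ref{def:psi}, and the dual $X^\vee$ only makes sense when $X$ is a proper subvariety). Therefore the zero locus of $\mathrm{EDpoly}_{X,u}(0)$ is cut out by a single nonzero polynomial equation, so it is a hypersurface of $V$, i.e.\ has pure codimension one.

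Combining this with the first part, $X \cup (X^\vee \cap Q)^\vee$ has dimension $n-1$. Since $\dim(X \cup (X^\vee \cap Q)^\vee) = \max\bigl(\dim X,\, \dim (X^\vee \cap Q)^\vee\bigr)$, at least one of the two subvarieties must achieve dimension $n-1$ and so be a hypersurface.

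I do not anticipate a genuine obstacle, since all the geometric content has been absorbed into Propositions \ref{pro: constterm in general} and \ref{prop:JXV}; the corollary is essentially a repackaging. The only mildly delicate point is the nonvanishing of $\mathrm{EDpoly}_{X,u}(0)$ as a polynomial in $u$, and this is handled cleanly by invoking Proposition \ref{pro:roots charpoly}.
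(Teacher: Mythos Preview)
Your proposal is correct and matches the paper's approach: the corollary is stated in the paper without proof, as an immediate consequence of Propositions \ref{pro: constterm in general} and \ref{prop:JXV}, exactly as you chain them. Your additional justification that $\mathrm{EDpoly}_{X,u}(0)$ is a nonzero polynomial in $u$ (via Proposition \ref{pro:roots charpoly}) is a welcome detail that the paper leaves implicit.
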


In the next sections, we will show an improvement of the last corollary applying the theory of Chern-Mather classes, which will need an additional transversality assumption.

\section{The ED degree in terms of dual varieties}\label{sec: EDpoly in terms of dual varieties}

Let $Y\subset\PP(V)$ be an irreducible projective variety. We define the {\em conormal variety} of $Y$ as
\[
\mathcal{N}_Y\coloneqq\overline{\{(y,x)\in V\times V\mid y\in Y_{\mathrm{sm}} \mathrm{\ and\ } x\in N_yY\}}\,.
\]
The variety $\mathcal{N}_Y$ is a cone in $V\times V$ and hence it may be regarded as a subvariety of $\PP(V)\times\PP(V)$. An important property of the conormal variety is the {\em Biduality Theorem} \cite[Chapter 1]{GKZ}, stating that $\mathcal{N}_Y=\mathcal{N}_{Y^\vee}$ and implying that $(Y^\vee)^\vee=Y$.

The {\em polar classes} of $Y$ are by definition the coefficients $\delta_i(Y)$ of the class
\[
[\mathcal{N}_{Y}]=\delta_0(Y)s^{n-1}t+\delta_1(Y)s^{n-2}t^2+\cdots+\delta_{n-2}(Y)st^{n-1}\,.
\]
It is shown in \cite[Theorem 5.4]{DHOST} that, if $\mathcal{N}_Y$ does not intersect the diagonal $\Delta(\PP(V))\subset\PP(V)\times\PP(V)$, then
\begin{equation}\label{eq: EDdegree polar classes}
\mathrm{EDdegree}(Y)=\delta_0(Y)+\cdots+\delta_{n-2}(Y)\,.
\end{equation}

A sufficient condition for $\mathcal{N}_{Y}$ not to intersect $\Delta(\PP(V))$ is furnished in the following result. 
\begin{proposition}\label{pro: Whitney trans. implies not meeting of diagonal}
Assume that $Y^\vee$ (or $Y$) is transversal to $Q$, according to definition \ref{def:transversal}. Then $\mathcal{N}_{Y}$ does not intersect $\Delta(\PP(V))$.
\end{proposition}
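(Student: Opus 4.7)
The plan is to argue by contradiction. Suppose some $(z,z)\in\mathcal{N}_Y\cap\Delta(\mathbb{P}(V))$ exists. Because the swap $(y,x)\mapsto(x,y)$ exchanges $\mathcal{N}_Y$ and $\mathcal{N}_{Y^\vee}$ by the Biduality Theorem while fixing $\Delta(\mathbb{P}(V))$, the hypothesis is symmetric in $Y$ and $Y^\vee$; so I will assume $Y^\vee$ is transversal to $Q$. Using that the locus of pairs $(y,x)\in\mathcal{N}_Y$ with $y\in Y_{\mathrm{sm}}$ and $x\in(Y^\vee)_{\mathrm{sm}}$ is open and dense in $\mathcal{N}_Y$, I would choose a sequence $(y_k,x_k)$ inside this locus with $y_k\to z$ and $x_k\to z$ in $\mathbb{P}(V)$. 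Biduality then guarantees $y_k\in(T_{x_k}Y^\vee)^\vee$, i.e.\ $y_k\perp_q T_{x_k}Y^\vee$.

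The first key step is to show $z\in Q$. Since $Y^\vee$ is an affine cone, the Euler relation gives $x_k\in T_{x_k}Y^\vee$, hence $q(x_k,y_k)=0$; passing to the limit yields $q(z,z)=0$, so $z\in Y^\vee\cap Q$. Let $S$ be the stratum of the chosen Whitney stratification of $Y^\vee$ containing $z$, and, up to a subsequence, assume all $x_k$ lie in the top stratum $(Y^\vee)_{\mathrm{sm}}$, whose closure contains $S$. Extracting a further subsequence on a Grassmannian of appropriate dimension, the tangent spaces $T_{x_k}Y^\vee$ converge to a subspace $T\subset T_z\mathbb{P}(V)$, and Whitney's regularity condition (a) gives $T_zS\subset T$. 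Taking $q$-orthogonals reverses the inclusion to $T^\perp\subset(T_zS)^\perp$; and since $y_k\in(T_{x_k}Y^\vee)^\perp\to z$, we conclude $z\in T^\perp\subset(T_zS)^\perp$, i.e.\ $T_zS\subset z^\perp=T_zQ$ (using that $Q$ is smooth at $z\neq0$ because $q$ is nondegenerate).

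Now I would close the argument: the transversality of $S$ with $Q$ at $z\in S\cap Q$ is exactly the equality $T_zS+T_zQ=T_z\mathbb{P}(V)$, while we have just shown $T_zS\subset T_zQ$. Together these force $T_zQ=T_z\mathbb{P}(V)$, contradicting the fact that $Q$ is a proper hypersurface. The most delicate point, and the one I would treat most carefully, is the stratification step: one must verify that the limit $T$ exists with the correct dimension so that Whitney's condition (a) genuinely applies to the pair $\bigl(S,(Y^\vee)_{\mathrm{sm}}\bigr)$ (and in particular that $S\neq(Y^\vee)_{\mathrm{sm}}$, otherwise $z\in(Y^\vee)_{\mathrm{sm}}$ and the argument simplifies with $T=T_zY^\vee$). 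This mirrors the limiting argument in the proof of Proposition~\ref{pro:integrality distance function}, so I would follow the same template.
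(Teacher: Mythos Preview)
Your proposal is correct and follows essentially the same approach as the paper: both argue by contradiction, use biduality to reduce to the hypothesis on $Y^\vee$, take a sequence $(y_k,x_k)\to(z,z)$ in the smooth part of $\mathcal{N}_Y$, use the cone property to get $q(y_k,x_k)=0$ and hence $z\in Q$, and then combine Whitney condition~(a) with $y_k\perp T_{x_k}Y^\vee$ to conclude $T_zS\subset T_zQ$, contradicting transversality. The only cosmetic difference is that the paper argues pointwise (pick $v\in T_zS$, approximate by $v_i\in T_{x_i}Y^\vee$, pass $q(y_i,v_i)=0$ to the limit), whereas you phrase the same step via orthogonal complements ($z\in T^\perp\subset(T_zS)^\perp$); these are equivalent.
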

\begin{proof} By Biduality Theorem it is enough to prove the result for $Y^\vee$.
Suppose that $(y,y)\in\mathcal{N}_{Y}$ for some $y\in Y$. By definition of $\mathcal{N}_{Y}$ and by hypothesis, there exists a sequence of vectors $(y_i,x_i)$ and pairs $(Y_1,Y_2), (Y'_1,Y'_2)$ satisfying the Whitney regularity condition $(a)$ along $Y$ and $Y^\vee$ respectively such that
\begin{enumerate}
\item $(y_i,x_i)\to(y,y)$,
\item $y_i\in Y_1$, $x_i\in Y'_1\cap(T_{y_i}Y_1)^\vee$ for all $i$, and
\item $y\in Y_2\cap Y'_2$.
\end{enumerate}
In particular, point $(2)$ says that $q(y_i,x_i)=0$ for all $i$, hence taking the limit we find $y\in Q$.

Now take a vector $v\in T_yY'_2$. We show that $v\in T_yQ$, obtaining that $T_yY'_2\subset T_yQ$, and thus contradicting the transversality assumption. By the Whitney condition $(a)$ we have that 
$T_yY'_2\subset\lim_{i\to\infty}T_{x_i}Y'_1$. This means that there exists a sequence $\{v_i\}$ with $v_i\in T_{x_i}Y'_1$ for all $i$ such that $v_i\to v$. From point $(2)$ we have $y_i\in(T_{x_i}Y'_1)^\vee$ 
for all $i$, hence $q(y_i,v_i)=0$ for all $i$. Finally taking the limit we have $q(y,v)=0$, that is $v\in T_yQ$.\qedhere
\end{proof}

In the following, we will assume the hypothesis of Proposition \ref{pro: Whitney trans. implies not meeting of diagonal}. If we additionally assume that $Y$ is smooth, the identity (\ref{eq: EDdegree polar classes}) allows us to express the ED degree of $Y$ in terms of Chern classes. The following formula, due essentially to Catanese and Trifogli \cite{CT}, is shown in \cite[Theorem 5.8]{DHOST}
\begin{equation}\label{eq: EDdegree Chern classes smooth case}
\mathrm{EDdegree}(Y)=\sum_{i=0}^m(-1)^i\left(2^{m+1-i}-1\right)c_i(Y)\cdot h^{m-i}\,,
\end{equation}
where $h$ denotes the hyperplane class and $m=\dim(Y)$.

Now we weaken our assumptions, allowing $Y$ to be singular. We recall that the Nash blow-up $\tilde Y$ is the graph of the Gauss map which sends every smooth point of $Y$ to its tangent space in the Grassmannian. The universal bundle of the Grassmannian restricts to a locally free  sheaf on $\tilde Y$. The push-forward to $Y$ of its Chern classes are called the {\em Chern-Mather classes} of $Y$ and are denoted as $c_i^\mathsmaller{M}(Y)$ (see \cite{Alu}). They agree with Chern classes if $Y$ is smooth.
We use a slightly different convention than in \cite{Alu}, for us $c_i^\mathsmaller{M}(X)$ is the component of dimension $\dim X-i$ (as with standard Chern classes), while in \cite{Alu} is the component of dimension $i$. 

Aluffi proves in \cite[Prop. 2.9]{Alu} the following generalization of (\ref{eq: EDdegree Chern classes smooth case}), with the same hypotheses of \cite[Theorem 5.4]{DHOST}:
\begin{equation}\label{eq:AluED}
\mathrm{EDdegree}(Y)=\sum_{i=0}^m(-1)^i\left(2^{m+1-i}-1\right)c_i^\mathsmaller{M}(Y)\cdot h^{m-i}\,.
\end{equation}
The following formula gives the degree of the polar classes $\delta_i(Y)$. It is classical in the smooth case, and it is due to 
Piene (\cite[Theorem 3]{Pie88} and \cite{Pie78}) in the singular case, see also \cite[Prop. 3.13]{Alu}.
\begin{equation}\label{eq: Pie78}
\delta_i(Y)=\sum_{j=0}^{m-i}(-1)^{j}\binom{m+1-j}{i+1}c_{j}^\mathsmaller{M}(Y)\cdot h^{m-j}\,.
\end{equation}
The right-hand side in (\ref{eq: Pie78}) is always a nonnegative integer. The defect of $Y$, namely the difference $\mathrm{codim}(Y^\vee)-1$, equals the minimum $i$ such that $\delta_i(Y)\neq 0$. For example, when $Y^\vee$  is a hypersurface, then
\begin{equation}\label{eq:degdual}
\mathrm{deg}(Y^\vee)=\delta_0(Y)=\sum_{j=0}^m(-1)^{j}\left(m+1-j\right)c_{j}^\mathsmaller{M}(Y)\cdot h^{m-j}\,.
\end{equation}
The formula (\ref{eq:degdual}) can be applied in the case when $Y=X^\vee\cap Q$, once we know the Chern-Mather classes of $X^\vee\cap Q$, where $X\subset\PP(V)$ is a projective variety. Since $X^\vee\cap Q$ is a divisor in $X^\vee$ with normal bundle ${\mathcal O}(2)$, these can be computed by a result of Pragacz and Parusinski in \cite{PP}. We need the assumption that $X^\vee$ is transversal to $Q$, according to Definition \ref{def:transversal}.
Denote by $c_{X^\vee}^\mathsmaller{M}=\sum c_i^\mathsmaller{M}(X^\vee)$ the Chern-Mather class of $X^\vee$. The equation displayed in three lines just after \cite[Lemma 1.2]{PP} shows that

\begin{equation}\label{eq:PP}
c_{X^\vee\cap Q}^\mathsmaller{M}=\frac{1}{1+2h}\sum_{i\ge 0} 2h\cdot c_i^\mathsmaller{M}(X^\vee)\,,
\end{equation}
hence
\begin{equation}\label{eq:chernxq}
c_j^\mathsmaller{M}(X^\vee\cap Q)\cdot h^{m-1-j}=2\sum_{i=0}^j(-1)^{j-i}2^{j-i}c_i^\mathsmaller{M}(X^\vee)\cdot h^{m-i}\,.
\end{equation}
\begin{lemma}\label{lem:2sum}
The following identity holds true:
\[
2^{m+1-i}-1=(m+1-i)+\sum_{j=i}^m(m-j)2^{j-i}\,.
\]
\end{lemma}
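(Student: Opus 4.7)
My plan is to observe that this is a purely combinatorial identity, which becomes transparent after a single reindexing. Setting $k := m-i$ and $\ell := j-i$, the claim to prove is
\[
2^{k+1} - 1 \;=\; (k+1) + \sum_{\ell=0}^{k}(k-\ell)\,2^{\ell},
\]
so equivalently I need to verify
\[
T(k) \;:=\; \sum_{\ell=0}^{k}(k-\ell)\,2^{\ell} \;=\; 2^{k+1} - k - 2.
\]

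The most direct route is to split $T(k) = k\sum_{\ell=0}^{k} 2^{\ell} - \sum_{\ell=0}^{k}\ell\cdot 2^{\ell}$. The first piece equals $k(2^{k+1}-1)$ by the geometric series. For the second piece I would apply the standard $2S - S$ telescoping trick (writing $S = \sum_{\ell=0}^{k}\ell\cdot 2^{\ell}$ and comparing with $2S = \sum_{\ell=0}^{k}\ell\cdot 2^{\ell+1}$) to obtain $S = (k-1)2^{k+1} + 2$. Subtracting gives $T(k) = 2^{k+1} - k - 2$, as required.

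Alternatively, induction on $k$ is even cleaner: the base case $k=0$ is trivial since both sides vanish. For the inductive step, a one-line computation shows
\[
T(k+1) - T(k) \;=\; \sum_{\ell=0}^{k} 2^{\ell} \;=\; 2^{k+1} - 1,
\]
which matches the increment of the closed-form right-hand side $2^{k+1}-k-2 \mapsto 2^{k+2}-(k+1)-2$, closing the induction.

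There is no real obstacle here; the only thing to be careful about is selecting notation that keeps the bookkeeping clean. After the change of variable $(k,\ell) = (m-i, j-i)$ everything reduces to two elementary evaluations ($\sum 2^\ell$ and $\sum \ell\cdot 2^\ell$) that are standard from the telescoping trick.
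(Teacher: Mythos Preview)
Your proposal is correct. Your first method (reindex, then split into $k\sum 2^\ell - \sum \ell\,2^\ell$ and evaluate each piece) is essentially the paper's own argument: the paper likewise decomposes $\sum_{j=i}^m(m-j)2^{j-i}$ into $m\sum 2^j$ and $\sum j\,2^{j-1}$ and plugs in the closed forms for the geometric and arithmetico-geometric sums, arriving at $2^{m+1-i}-(m+2-i)$. The only cosmetic difference is that you substitute $k=m-i$, $\ell=j-i$ up front, which tidies the bookkeeping, and you justify $\sum \ell\,2^\ell$ via the $2S-S$ telescope rather than quoting the general formula $\sum k r^{k-1}=\frac{1-r^{m+1}}{(1-r)^2}-\frac{(m+1)r^m}{1-r}$.

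Your alternative inductive argument is a genuinely different (and slightly slicker) route not taken in the paper: it avoids evaluating $\sum \ell\,2^\ell$ altogether by reducing the increment $T(k+1)-T(k)$ to a plain geometric sum. Either argument is perfectly adequate here.
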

\begin{proof}
Using the identities ($r\neq 1$)
\[
\sum_{k=s}^mr^k = \frac{r^s-r^{m+1}}{1-r}\quad\mbox{and}\quad
\sum_{k=1}^mkr^{k-1} = \frac{1-r^{m+1}}{(1-r)^2}-\frac{(m+1)r^m}{1-r}\,,
\]
we have that
\[
\sum_{j=i}^m(m-j)2^{j-i} = \frac{1}{2^i}\left(m\sum_{j=i}^m2^j-2\sum_{j=i}^mj2^{j-1}\right) = \frac{2^{m+1}-(m+2-i)2^i}{2^i} = 2^{m+1-i}-(m+2-i)\,.\qedhere
\]
\end{proof}

\begin{theorem}\label{thm:2EDdegree}
Assume that $X^\vee$ is transversal to $Q$, according to definition \ref{def:transversal}. If $X$ is not a hypersurface, then 
\begin{equation}\label{eq: 2EDdegree X not hypersurface}
2\mathrm{EDdegree}(X)=2\mathrm{EDdegree}(X^\vee)=\mathrm{deg}((X^\vee\cap Q)^\vee)\,.
\end{equation}
Otherwise if $X$ is a hypersurface, then
\begin{equation}\label{eq: 2EDdegree X hypersurface}
2\mathrm{EDdegree}(X)=2\mathrm{EDdegree}(X^\vee)=2\deg(X)+\mathrm{deg}((X^\vee\cap Q)^\vee)\,.
\end{equation}
In particular, $(X^\vee\cap Q)^\vee$ has always even degree.
\end{theorem}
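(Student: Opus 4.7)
My plan is to combine the three formulas collected just before the statement — Aluffi's formula \eqref{eq:AluED} for the ED degree of $X^\vee$, Piene's formula \eqref{eq:degdual} applied to $X^\vee\cap Q$, and the Pragacz-Parusi\'nski identity \eqref{eq:chernxq} expressing the Chern-Mather classes of $X^\vee\cap Q$ in terms of those of $X^\vee$ — and show that they fit together via Lemma \ref{lem:2sum} to produce exactly $2\,\mathrm{EDdegree}(X^\vee)$, with a correction of $2\deg(X)$ appearing precisely when $X$ is a hypersurface. The equality $\mathrm{EDdegree}(X)=\mathrm{EDdegree}(X^\vee)$ is already granted by \cite[Thm.~5.2]{DHOST}, and Proposition \ref{pro: Whitney trans. implies not meeting of diagonal} ensures that all the polar-class formulas of \S 6 apply.

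Set $m=\dim X^\vee$. The first key step is to apply Piene's formula \eqref{eq:degdual} to $Y=X^\vee\cap Q$, which has dimension $m-1$, obtaining
\[
\deg((X^\vee\cap Q)^\vee)=\sum_{j=0}^{m-1}(-1)^j(m-j)\,c_j^\mathsmaller{M}(X^\vee\cap Q)\cdot h^{m-1-j}.
\]
Then I plug in \eqref{eq:chernxq} and swap the two summations. Using that $(-1)^j(-1)^{j-i}=(-1)^i$ and that the coefficient $(m-j)2^{j-i}$ vanishes at $j=m$ (so the outer and inner sums can harmlessly be extended to run up to $m$), I arrive at
\[
\deg((X^\vee\cap Q)^\vee)=2\sum_{i=0}^{m}(-1)^i c_i^\mathsmaller{M}(X^\vee)\cdot h^{m-i}\sum_{j=i}^{m}(m-j)2^{j-i}.
\]

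The second key step is to invoke Lemma \ref{lem:2sum}, which rewrites the inner sum as $(2^{m+1-i}-1)-(m+1-i)$. Splitting this difference separates the right-hand side into two classical sums: the first is exactly $2\,\mathrm{EDdegree}(X^\vee)$ by Aluffi's formula \eqref{eq:AluED}, and the second is $2\delta_0(X^\vee)$ by Piene's formula \eqref{eq:degdual}. Hence
\[
\deg((X^\vee\cap Q)^\vee)=2\,\mathrm{EDdegree}(X^\vee)-2\,\delta_0(X^\vee).
\]

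To finish, I use biduality $(X^\vee)^\vee=X$ to interpret $\delta_0(X^\vee)$. If $X$ is a hypersurface, then $\delta_0(X^\vee)=\deg(X)$, giving \eqref{eq: 2EDdegree X hypersurface}. If $X$ is not a hypersurface, then $X=(X^\vee)^\vee$ is not a hypersurface either, so the defect of $X^\vee$ is at least $1$, i.e.\ $\delta_0(X^\vee)=0$, yielding \eqref{eq: 2EDdegree X not hypersurface}. The final parity assertion is automatic since the left-hand side equals $2\,\mathrm{EDdegree}(X)$ in the first case and $2\,\mathrm{EDdegree}(X)-2\deg(X)$ in the second.

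The main obstacle, in my view, is purely bookkeeping: tracking the index shift between Aluffi's sum (ranging over $0\le i\le m$) and Piene's sum applied to the divisor $X^\vee\cap Q$ (ranging over $0\le j\le m-1$), and verifying that the rearrangement legitimately extends to a common index range so that Lemma \ref{lem:2sum} applies term by term. Once the substitution is set up correctly, the identification with $2\,\mathrm{EDdegree}(X^\vee)-2\delta_0(X^\vee)$ is essentially a tautology after Lemma \ref{lem:2sum}, and biduality does the remaining structural work.
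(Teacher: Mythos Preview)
Your proof is correct and follows essentially the same route as the paper: apply \eqref{eq:degdual} to $X^\vee\cap Q$, substitute \eqref{eq:chernxq}, swap the sums, and invoke Lemma \ref{lem:2sum} to identify the result as $2\,\mathrm{EDdegree}(X^\vee)-2\delta_0(X^\vee)$. The only point the paper treats more carefully is the justification that \eqref{eq:degdual} actually computes $\deg((X^\vee\cap Q)^\vee)$ rather than merely $\delta_0(X^\vee\cap Q)$: in the non-hypersurface case it cites Corollary \ref{cor:constterm for cones} to ensure $(X^\vee\cap Q)^\vee$ is a hypersurface, and in the hypersurface case it separately disposes of the degenerate possibility $\delta_0(X^\vee\cap Q)=0$ (which forces $X$ to be a hyperplane, so $X^\vee\cap Q=\emptyset$ and the identity holds trivially).
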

\begin{proof} If $X$ is not a hypersurface, then $(X^\vee\cap Q)^\vee$ is a hypersurface by Corollary \ref{cor:constterm for cones}. Hence we can apply the identity (\ref{eq:degdual}) and we obtain that (here $m=\dim(X^\vee)$)

\begingroup
\allowdisplaybreaks
\begin{align}\label{eq: computation difference Theorem 2EDdegree}
\begin{split}
\deg((X^\vee\cap Q)^\vee) & =\sum_{j=0}^{m-1}(-1)^j\left(m-j\right)c_j^\mathsmaller{M}(X^\vee\cap Q)h^{m-1-j}\\
 & =\sum_{j=0}^m(-1)^j\left(m-j\right)c_j^\mathsmaller{M}(X^\vee\cap Q)h^{m-1-j}\\
 (\ast) & = 2\sum_{j=0}^m(-1)^j\left(m-j\right)\left[\sum_{i=0}^j(-1)^{j-i}2^{j-i}c_{i}^\mathsmaller{M}(X^\vee)\cdot h^{m-i}\right]\\
 & = 2\sum_{i=0}^m(-1)^ic_i^\mathsmaller{M}(X^\vee)h^{m-i}\left[\sum_{j=i}^m(m-j)2^{j-i}\right]\\
 & = 2\sum_{i=0}^m(-1)^i(2^{m+1-i}-1)c_i^\mathsmaller{M}(X^\vee)\cdot h^{m-i}\\
 & \quad-2\sum_{i=0}^m(-1)^i(m+1-i)c_i^\mathsmaller{M}(X^\vee)\cdot h^{m-i},
\end{split}
\end{align}
\endgroup

where in ($\ast$) we used (\ref{eq:chernxq}) and in the last equality we applied Lemma \ref{lem:2sum}. In the last expression obtained, since $X^\vee$ is transversal to $Q$, by (\ref{eq:AluED}) the first term coincides with $2\mathrm{EDdegree}(X^\vee)$, whereas by (\ref{eq:degdual}) the second term is equal to $2\delta_0(X^\vee)$, which vanishes because $X$ is not a hypersurface. Hence the identity (\ref{eq: 2EDdegree X not hypersurface}) is satisfied.

Now assume that $X$ is a hypersurface. The expression in the first line of (\ref{eq: computation difference Theorem 2EDdegree}) is exactly the polar class $\delta_0(X^\vee\cap Q)$. The computation (\ref{eq: computation difference Theorem 2EDdegree}) shows that the same expression is equal to
\[
2\mathrm{EDdegree}(X)-2\delta_0(X^\vee)=2\sum_{j=1}^{n-2}\delta_j(X^\vee)\,,
\]
by the identity (\ref{eq: EDdegree polar classes}). If this expression vanishes we get $\delta_j(X^\vee)=0$ for $j\ge 1$,
which is equivalent to $\delta_j(X)=0$ for $j\le n-3$. Hence the defect $\mathrm{codim}(X^\vee)-1$ is $n-3$ and $X^\vee$ is a point, namely $X$ is a hyperplane. In particular, $X^\vee\cap Q$ is empty and therefore $(X^\vee\cap Q)^\vee=\PP(V)$. In conclusion, the identity (\ref{eq: 2EDdegree X hypersurface}) is satisfied. 
Otherwise if $\delta_0(X^\vee\cap Q)\neq 0$, the identity (\ref{eq: 2EDdegree X hypersurface}) is satisfied as well, since in this case $0\neq\delta_0(X^\vee)=\deg(X)$.\qedhere
\end{proof}

One may wonder if Theorem \ref{thm:2EDdegree} remains true without transversality assumptions. The case of symmetric tensors studied in Example \ref{ex: ED poly dual Veronese} answers in the negative. Already the binary cubic case gives a counterexample. Indeed, as noticed in the introduction, for symmetric tensors the degree of $X^\vee$ is greater than the ED degree of $X$, the opposite of the general case.

\begin{corollary}\label{cor:hypersurface}
Let $X^\vee$ be a positive dimensional variety which is transversal to a smooth quadric $Q$. Then $(X^\vee\cap Q)^\vee$ is a hypersurface.
\end{corollary}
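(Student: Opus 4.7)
The plan is to reduce everything to Theorem \ref{thm:2EDdegree} and Corollary \ref{cor:constterm for cones} by swapping the roles of $X$ and $X^\vee$. By biduality $(X^\vee)^\vee = X$, so the transversality hypothesis ``$X^\vee$ transversal to $Q$'' required in Theorem \ref{thm:2EDdegree}, once applied to $X^\vee$ in place of $X$, becomes ``$X$ is transversal to $Q$'', which is our standing assumption; similarly, the reducedness hypothesis of Corollary \ref{cor:constterm for cones} applied to $X^\vee$ becomes ``$X\cap Q$ is reduced'', which is ensured by transversality.

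I would then split into two cases according to whether $X^\vee$ is a hypersurface. First, if $X^\vee$ is not a hypersurface, Corollary \ref{cor:constterm for cones} applied to $X^\vee$ asserts that at least one of $X^\vee$ and $(X\cap Q)^\vee$ is a hypersurface, forcing $(X\cap Q)^\vee$ to be one. This disposes of the defective case immediately.

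The harder case, and the only place where positive-dimensionality of $X$ enters, is when $X^\vee$ is itself a hypersurface. Applying the hypersurface case of Theorem \ref{thm:2EDdegree} with $X^\vee$ in the role of $X$ yields
\[
\deg\bigl((X\cap Q)^\vee\bigr)=2\,\mathrm{EDdegree}(X^\vee)-2\deg(X^\vee),
\]
and the task reduces to checking that the right-hand side is strictly positive. The main obstacle will be this final positivity verification, which I would handle by mimicking the argument closing the proof of Theorem \ref{thm:2EDdegree}: combining the polar class expansion (\ref{eq: EDdegree polar classes}) with the biduality relation $\delta_j(X^\vee)=\delta_{n-2-j}(X)$, vanishing of the right-hand side would force $\delta_i(X)=0$ for every $i\le n-3$, making the defect of $X$ equal to $n-2$, so that $X$ would have to be a single point. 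Since $X$ is assumed positive dimensional, this is impossible, so the degree is strictly positive and $(X\cap Q)^\vee$ is a hypersurface. In short, no fundamentally new geometric input is needed beyond what is already packaged in the two cited results; the work is in the correct bookkeeping of the biduality swap and of the polar class identities under it.
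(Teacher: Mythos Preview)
Your approach is correct and essentially the same as the paper's: both rest on the identity $\delta_0\bigl((X\cap Q)^\vee\bigr)=2\,\mathrm{EDdegree}(X)-2\delta_0(X)$ coming from the computation~(\ref{eq: computation difference Theorem 2EDdegree}) with $X^\vee$ replaced by $X$, followed by the polar-class vanishing argument from the end of the proof of Theorem~\ref{thm:2EDdegree}. The paper does this in one stroke without a case split, whereas you route through the packaged statements of Corollary~\ref{cor:constterm for cones} and Theorem~\ref{thm:2EDdegree}, but the substance is identical.

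One bookkeeping slip in your final step: after the swap, what vanishes is $\delta_j(X^\vee)=0$ for $j\le n-3$ (equivalently $\delta_i(X)=0$ for $i\ge 1$), not $\delta_i(X)=0$ for $i\le n-3$. Consequently it is the defect of $X^\vee$ that equals $n-2$, giving $\mathrm{codim}(X)=n-1$ and hence $X$ a point---which is exactly the contradiction you want with $\dim X>0$. As you wrote it, ``defect of $X$ equal to $n-2$'' would instead force $X^\vee$ to be a point, contradicting your Case~2 assumption that $X^\vee$ is a hypersurface; so the argument still closes, but not along the path you describe.
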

\begin{proof} The computation (\ref{eq: computation difference Theorem 2EDdegree}) shows that
\[
\delta_0(X^\vee\cap Q)=2\mathrm{EDdegree}(X)-2\delta_0(X^\vee)\,.
\]
If $(X^\vee\cap Q)^\vee$ is not a hypersurface, we get $\delta_0(X^\vee\cap Q)=0$, hence $\delta_j(X^\vee)=0$ for all $j\ge 1$, namely $\delta_j(X)=0$ for all $j\le n-3$. Hence the defect $\mathrm{codim}(X^\vee)-1$ is $n-3$ and $X^\vee$ is zero dimensional.\qedhere
\end{proof}

We recall that, if $X\subset\PP(V)$ is such that $\mathrm{codim}(X)\ge 2$, then $(X^\vee\cap Q)^\vee$ is a hypersurface by Corollary \ref{cor:constterm for cones}.

\begin{theorem}\label{thm:eqdual codim X greater than 1}
Let $X\subset\PP(V)$ be a variety such that $\mathrm{codim}(X)\ge 2$. Suppose that $X$ and $X^\vee$ are transversal to $Q$. Let $g$ be the equation of $(X^\vee\cap Q)^\vee$. Then for any data point $u\in V$, we have the identity
\[
\mathrm{EDpoly}_{X,u}(0)=g(u)
\]
up to a scalar factor. Moreover $X\subset (X^\vee\cap Q)^\vee$. 
\end{theorem}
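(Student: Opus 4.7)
The plan is to combine three previous results to pin down the constant term $\mathrm{EDpoly}_{X,u}(0)$ exactly. Since $X^\vee$ is transversal to $Q$, the intersection $X^\vee\cap Q$ is reduced, so Corollary \ref{cor:constterm for cones} gives that the zero locus of $\mathrm{EDpoly}_{X,u}(0)$ is $X\cup(X^\vee\cap Q)^\vee$. By Corollary \ref{cor:hypersurface} (applied to $X^\vee$, which is transversal to $Q$), the set $(X^\vee\cap Q)^\vee$ is a hypersurface cut out by $g$. Since $\mathrm{codim}(X)\ge 2$, the component $X$ contributes no hypersurface to the zero locus, so $\mathrm{EDpoly}_{X,u}(0)$ and $g$ have the same radical; writing $g=g_1\cdots g_k$ for the factorization into irreducibles, this means $\mathrm{EDpoly}_{X,u}(0)=c\cdot g_1^{a_1}\cdots g_k^{a_k}$ for some positive integers $a_i$ and a nonzero scalar $c$.

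The next step pins down the exponents by a degree count. By Corollary \ref{cor: degree constant term} (which applies thanks to the transversality of $X$ to $Q$), $\mathrm{EDpoly}_{X,u}(0)$ is homogeneous in $u$ of degree $2\,\mathrm{EDdegree}(X)$. Since $X$ is not a hypersurface, Theorem \ref{thm:2EDdegree} gives $\deg g=\deg((X^\vee\cap Q)^\vee)=2\,\mathrm{EDdegree}(X)$ as well. Matching degrees forces $a_1=\cdots=a_k=1$, so $\mathrm{EDpoly}_{X,u}(0)=c\cdot g(u)$ up to a scalar, which is the first claim.

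For the ``moreover'' part, I would appeal once more to Corollary \ref{cor:constterm for cones}: the zero locus of $\mathrm{EDpoly}_{X,u}(0)=c\cdot g(u)$ is $V(g)=(X^\vee\cap Q)^\vee$, and this locus contains $X$, hence $X\subset(X^\vee\cap Q)^\vee$. The main obstacle in the argument is the multiplicity-one statement, since the set-theoretic description via Corollary \ref{cor:constterm for cones} alone is not enough; the key input is the exact degree formula of Theorem \ref{thm:2EDdegree}, which depends essentially on the transversality of both $X$ and $X^\vee$ to $Q$ and on the hypothesis $\mathrm{codim}(X)\ge 2$ (so as to avoid the extra $2\deg(X)$ correction that appears in the hypersurface case treated by Theorem \ref{thm:eqdual}).
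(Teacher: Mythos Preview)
Your proof is correct and follows essentially the same approach as the paper's: use Corollary~\ref{cor:constterm for cones} to identify the zero locus of $\mathrm{EDpoly}_{X,u}(0)$ set-theoretically, then pin down the multiplicities by the degree count coming from Corollary~\ref{cor: degree constant term} and Theorem~\ref{thm:2EDdegree}. The paper's version is terser, writing $\mathrm{EDpoly}_{X,u}(0)=g^k$ directly (implicitly treating $g$ as irreducible), whereas you are slightly more careful in allowing $g=g_1\cdots g_k$ with possibly distinct exponents $a_i$; your observation that $\sum a_i\deg g_i=\sum\deg g_i$ with $a_i\ge 1$ forces all $a_i=1$ is the right way to close that gap.
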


\begin{proof}
On the one hand, by Corollary \ref{cor:constterm for cones} we have that $\mathrm{EDpoly}_{X,u}(0)=g^k$ for some positive integer $k$, hence $\deg(\mathrm{EDpoly}_{X,u}(0))=k\deg(g)$. On the other hand, comparing degrees by
Theorem \ref{thm:2EDdegree} we get $k=1$. The inclusion follows again from Corollary \ref{cor:constterm for cones}.\qedhere
\end{proof}

\section{The ED polynomial of a hypersurface}

If we restrict to the case when $X\subset\PP(V)$ is a projective hypersurface, the results found in Sections \ref{sec: study of the constant term} and \ref{sec: EDpoly in terms of dual varieties} allow us to describe completely the lowest coefficient of $\mathrm{EDpoly}_{X,u}$ for any $u\in V$.

\begin{theorem}\label{thm:eqdual}
Let $X\subset\PP(V)$ be an irreducible hypersurface, suppose that $X$ and $X^\vee$ are transversal to $Q$. Let $f$ be the equation of $X$ and $g$ be the equation of $(X^\vee\cap Q)^\vee$ when it is a hypersurface, otherwise define $g\coloneqq 1$. Then for any data point $u\in V$, we have the identity
\[
\mathrm{EDpoly}_{X,u}(0)=f^2(u)g(u)
\]
up to a scalar factor. 
\end{theorem}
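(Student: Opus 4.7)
The plan combines the set-theoretic description from Corollary \ref{cor:constterm for cones} with a degree count and one local multiplicity computation. By Corollary \ref{cor:constterm for cones}, the zero locus of $u\mapsto\mathrm{EDpoly}_{X,u}(0)$ is $X\cup(X^\vee\cap Q)^\vee$, and by Corollary \ref{cor:hypersurface} the second set is also a hypersurface. Writing $g=g_1\cdots g_k$ for the product of the equations of the irreducible components of $(X^\vee\cap Q)^\vee$, the decomposition of the zero locus gives
\[
\mathrm{EDpoly}_{X,u}(0)=f^{a}g_1^{b_1}\cdots g_k^{b_k}
\]
up to a nonzero scalar, with all exponents $a,b_j\ge 1$. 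Corollary \ref{cor: degree constant term} asserts that the left-hand side is homogeneous of degree $2d=2\mathrm{EDdegree}(X)$, and the hypersurface case of Theorem \ref{thm:2EDdegree} gives $2d=2\deg f+\deg g$. Combining,
\[
a\deg f+\sum_{j=1}^{k}b_j\deg g_j=2\deg f+\sum_{j=1}^{k}\deg g_j.
\]
It therefore suffices to prove $a\ge 2$, since $(a-2)\deg f+\sum_j(b_j-1)\deg g_j=0$ with all summands non-negative forces $a=2$ and $b_j=1$ for every $j$.

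The main step, and the one I expect to be the most delicate, is the local multiplicity bound $a\ge 2$. I would pick a general $u_0\in X_{\mathrm{sm}}$ for which $\nabla f(u_0)$ is not $q$-isotropic (possible by transversality of $X$ and $Q$), and perturb $u=u_0+\epsilon v$. Solving the critical equations $f(x(u))=0$ and $u-x(u)\in\langle\nabla f(x(u))\rangle$ to first order in $\epsilon$ yields $x(u)=u_0+\epsilon P(v)+O(\epsilon^2)$, where $P$ is the $q$-orthogonal projection onto $T_{u_0}X$. Hence $u-x(u)=\epsilon P^\perp(v)+O(\epsilon^2)$ and the associated root of $\mathrm{EDpoly}_{X,u}$ in the variable $s=t^2$ is
\[
s_1(u)=q(u-x(u))=\epsilon^{2}q(P^\perp(v))+O(\epsilon^3).
\]
Since the remaining critical values $s_i(u_0)$ for $i\ne 1$ are nonzero for generic $u_0$, the factorization $\mathrm{EDpoly}_{X,u}(0)=(\text{const})\cdot\prod_i(-s_i(u))$ vanishes to order exactly $2$ transversally to $X$ at $u_0$. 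Equivalently, $q(P^\perp(v))=(\nabla f(u_0)\cdot v)^2/q(\nabla f(u_0))$ and $f(u)=\epsilon\,\nabla f(u_0)\cdot v+O(\epsilon^2)$, so the second-order Taylor expansion of $\mathrm{EDpoly}_{X,u}(0)$ at $u_0$ is proportional to $f(u)^2$. Hence the irreducible hypersurface $X$ enters the divisor of $\mathrm{EDpoly}_{X,u}(0)$ with multiplicity at least two, giving $a\ge 2$.

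Putting the pieces together yields $a=2$ and $b_j=1$ for all $j$, that is $\mathrm{EDpoly}_{X,u}(0)=f^2 g$ up to a scalar factor. Note that the identity is still correct in the degenerate case $X=(X^\vee\cap Q)^\vee$, in which $f$ and $g$ are proportional and the conclusion reduces to $\mathrm{EDpoly}_{X,u}(0)=f^3$; the same degree comparison produces the exponent $3$ directly once one knows $a\ge 2$.
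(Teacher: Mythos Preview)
Your argument is correct and follows the same skeleton as the paper's proof: identify the zero locus via Corollary~\ref{cor:constterm for cones}, read off the total degree from Corollary~\ref{cor: degree constant term} together with Theorem~\ref{thm:2EDdegree}, and then force the exponents by showing that $f$ occurs at least to the second power. The paper simply asserts $h\ge 2$ at that last step; your first-order perturbation of the critical branch $x(u)$ near a generic smooth point $u_0\in X$ with $q(\nabla f(u_0))\neq 0$, giving $s_1(u)=f(u)^2/q(\nabla f(u_0))+O(\epsilon^3)$, makes this fully explicit and is a genuine addition. You are also a bit more careful than the paper in allowing $(X^\vee\cap Q)^\vee$ to be reducible and tracking the exponent on each component; the paper writes $f^h g^k$ as if $g$ were irreducible. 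One minor point: your decomposition $f^a g_1^{b_1}\cdots g_k^{b_k}$ tacitly assumes $f$ is not among the $g_j$; the case where $X$ is a proper irreducible component of $(X^\vee\cap Q)^\vee$ is covered by the same degree count once you merge the corresponding exponents, exactly as in your final remark about $X=(X^\vee\cap Q)^\vee$. A shorter alternative to your local computation, available in the real setting of the paper, is to observe that for real $u$ the constant term $(-1)^d\prod_i s_i(u)$ is a product of non-negative reals and complex-conjugate pairs, hence cannot change sign across $X_{\mathbb R}$, so the real irreducible factor $f$ must occur with even exponent.
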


\begin{proof}
If $X$ is a hyperplane, the statement follows by Corollary \ref{cor:linear subspace}. Otherwise $\deg(X)\ge 2$, $X^\vee$ is positive dimensional and $(X^\vee\cap Q)^\vee$ is a hypersurface by Corollary \ref{cor:hypersurface}.

On the one hand, by Corollary \ref{cor:constterm for cones} we have that $\mathrm{EDpoly}_{X,u}(0)=f^hg^k$ for some positive integers $h$ and $k$, hence $\deg(\mathrm{EDpoly}_{X,u}(0))=h\deg(f)+k\deg(g)$. On the other hand, by Corollary \ref{cor: degree constant term} and Theorem \ref{thm:2EDdegree} we have that $h\geq 2$ and $\deg(\mathrm{EDpoly}_{X,u}(0))=2\deg(f)+\deg(g)$.\qedhere
\end{proof}

The hypotheses of Theorems \ref{thm:eqdual codim X greater than 1} and \ref{thm:eqdual} are reasonable and agree with the principal results in the ED degree-philosophy. Anyway, in many important examples related to varieties of tensors with the Frobenius quadratic form, these hypotheses are not satisfied. A positive result is that we can relax the assumptions of transversality at least for computing the exact multiplicity of the equation of $X$ in $\mathrm{EDpoly}_{X,u}(0)$, when $X$ is a hypersurface.

\begin{proposition}\label{pro: multiplicity 2 in general}
Let $X\subset\PP(V)$ be an irreducible projective hypersurface. Then the equation of $X$ appears with multiplicity two in $\mathrm{EDpoly}_{X,u}(0)$.
\end{proposition}

\begin{proof}
Quadric hypersurfaces of $\PP(V)$ that are transversal to $X$ and $X^\vee$ form a dense open subset $U\subset\PP(S^2V)$. In particular, $Q$ is the limit of a sequence $\{Q_j\}\subset U$. Let $\mathrm{EDpoly}^{(j)}_{X,u}(t^2)$ be the ED polynomial of $X$ at $u\in V$ with respect to te quadric $Q_j$, for all $j$. By Theorem \ref{thm:eqdual}, for all $j$ we have
\[
\mathrm{EDpoly}^{(j)}_{Y,y}(0)=f^2\cdot g_j\,,
\]
where $g_j$ is the equation of $(X^\vee\cap Q_j)^\vee$. Moreover, by Corollary \ref{cor:constterm for cones} we know that $\mathrm{EDpoly}_{X,u}(0)=f^\alpha\cdot g^\beta$ for some nonnegative integers $\alpha$ and $\beta$, where $g$ is the equation of $(X^\vee\cap Q)^\vee$. In particular,
\[
f^\alpha\cdot g^\beta\cdot h=\mathrm{EDpoly}_{X,u}(0)\cdot h=\lim_{j\to\infty}\mathrm{EDpoly}^{(j)}_{X,u}(0)=\lim_{j\to\infty}f^2\cdot g_j=f^2\cdot\lim_{j\to\infty}g_j\,,
\]
for some homogeneous polynomial $h$, possibly a scalar. In particular, $\alpha\ge 2$.

We show that actually $\alpha=2$. If $\alpha\ge 3$, then $f$ divides $\lim_{j\to\infty}g_j$, that is, $f$ divides $g$ or $f$ divides $h$. It remains to show that $f$ cannot divide $g$. In particular, our claim is that $\mathrm{codim}_\R[(X^\vee\cap Q)^\vee]\ge 2$.

Consider a smooth point $z\in X^\vee\cap Q$ and the corresponding normal space $S_z\coloneqq N_z(X^\vee\cap Q)$. Assume that $l_1,\ldots,l_r$ are the linear polynomials defining $S_z$. We denote by $\overline{S}_z$ the variety defined by $\bar{l}_1,\ldots,\bar{l}_r$, where the bar means complex conjugation. If $z\in\overline{S}_z$, then $q(z-\bar{z},y)=0$ for all $y\in T_z(X^\vee\cap Q)$. In particular, $q(\bar{z},z)=q(\bar{z},z)-q(z,z)=q(\bar{z}-z,z)=0$, contradiction. This implies that $S_z\neq\overline{S}_z$ and, in turn, that $\mathrm{codim}_\R(S_z)\ge 2$. The claim follows by Definition \ref{def: dual variety}.
\end{proof}

The simplest case is when $V$ is $2$-dimensional. Let $C\colon f(x,y)=0$ be an affine plane curve which is transversal to the isotropic quadric at infinity. In this case, $Q_{\infty}=\{I,J\}$, where $I=[1,\sqrt{-1},0], J=[1,-\sqrt{-1},0]$.

Looking at Proposition \ref{pro: constterm in general}, for any $p\in C_{\mathrm{sm}}$, we have $J_{C,p}\neq\emptyset$ if and only if $T_pC=p+\langle v\rangle$ with $v\in\{(1,\sqrt{-1}),(1,-\sqrt{-1})\}$. In other words, the lowest term of $\mathrm{EDpoly}_{C,u}$ is the product of $f$ times the linear factors coming from tangent lines to $C$ meeting $I$ or $J$ at infinity.

Now we assume that the projective closure $\overline{C}$ is transversal to the isotropic quadric $\overline{Q}$ and we consider the ED polynomial of $\overline{C}$. We have already mentioned that $\mathrm{EDpoly}_{\overline{C},u}$ is in general not related with $\mathrm{EDpoly}_{C,u}$. This fact is even more clear when focusing on the lowest term of $\mathrm{EDpoly}_{\overline{C},u}$. In this case, according to Corollary \ref{cor:constterm for cones}, $\mathrm{EDpoly}_{\overline{C},u}(0)$ is up to a scalar factor the product of the homogenization of $f$ times the linear factors coming from tangent lines to $\overline{Q}$ at the points of $\overline{C}^\vee\cap\overline{Q}$.

\begin{example}
For example, consider the hyperbola $X\colon 4 x_1^{2}-9 x_2^{2}-1=0$ in Figure \ref{fig: ex hyperbola}. Given a data point $u=(u_1,u_2)\in\mathbb{C}^2$, one can verify that
\[
\mathrm{EDpoly}_{X,u}(0)=({4 {u}_{1}^{2}-9 {u}_{2}^{2}-1})^{2} (1296 {u}_{1}^{4}+2592 {u}_{1}^{2} {u}_{2}^{2}+1296 {u}_{2}^{4}-936 {u}_{1}^{2}+936 {u}_{2}^{2}+169)\,.
\]
As explained before, the second factor of the above polynomial is the product of the four pairwise conjugate lines tangent to $X$ and meeting $I$ or $J$ at infinity. On the other hand, we consider the projective closure $\overline{X}\colon 4 x_1^{2}-9 x_2^{2}-x_0^2=0$ of $X$ and we compute its ED polynomial with respect to the point $\overline{u}=[1,u_1,u_2]$. Now we obtain that
\[
\mathrm{EDpoly}_{\overline{X},\overline{u}}(0)=({4 x^{2}-9 y^{2}-1})^{2} (1024 x^{4}+2880 x^{2} y^{2}+2025 y^{4}-832 x^{2}+1170 y^{2}+169)\,.
\]
Note that the second factor of $\mathrm{EDpoly}_{\overline{X},\overline{u}}(0)$ corresponds to the dual variety of $\overline{X}^\vee\cap\overline{Q}$ which is the union of four pairwise conjugate lines different from its corresponding ones in $\mathrm{EDpoly}_{X,u}(0)$.

\begin{figure}
\centering
\begin{minipage}[c]{.40\textwidth}
\begin{tikzpicture}[line cap=round,line join=round,>=triangle 45,x=1.0cm,y=1.0cm, scale=2.5]
\draw[->, color=black] (-1.3,0.) -- (1.3,0.);
\foreach \x in {-1.2,-0.9,-0.6,-0.3,0.3,0.6,0.9,1.2}
\draw[shift={(\x,0)},color=black] (0pt,0.5pt) -- (0pt,-0.5pt) node[below] {\tiny $\x$};
\draw[->, color=black] (0.,-1.) -- (0.,1.);
\foreach \y in {-0.9,-0.6,-0.3,0.3,0.6,0.9}
\draw[shift={(0,\y)},color=black] (0.5pt,0pt) -- (-0.5pt,0pt) node[left] {\tiny $\y$};
\draw[color=black] (0pt,-2.6pt) node[right] {\tiny $0$};
\clip(-1.2,-1.) rectangle (1.2,1.);
\draw [samples=50,domain=-0.99:0.99,rotate around={0.:(0.,0.)},xshift=0.cm,yshift=0.cm,line width=0.5pt] plot ({0.5*(1+(\x)^2)/(1-(\x)^2)},{0.33333333333333337*2*(\x)/(1-(\x)^2)});
\draw [samples=50,domain=-0.99:0.99,rotate around={0.:(0.,0.)},xshift=0.cm,yshift=0.cm,line width=0.5pt] plot ({0.5*(-1-(\x)^2)/(1-(\x)^2)},{0.33333333333333337*(-2)*(\x)/(1-(\x)^2)});
\begin{scriptsize}
\draw[color=black] (1,0.4) node {$X$};
\end{scriptsize}
\end{tikzpicture}
\end{minipage}
\hspace{2cm}
\begin{minipage}[c]{.40\textwidth}
\begin{tikzpicture}[line cap=round,line join=round,>=triangle 45,x=1.0cm,y=1.0cm, scale=3]
\draw[->, color=black] (-1.,0.) -- (1.,0.);
\foreach \x in {-0.9,-0.6,-0.3,0.3,0.6,0.9}
\draw[shift={(\x,0)},color=black] (0pt,0.5pt) -- (0pt,-0.5pt) node[below] {\tiny $\x$};
\draw[->, color=black] (0.,-1.) -- (0.,1.);
\foreach \y in {-0.9,-0.6,-0.3,0.3,0.6,0.9}
\draw[shift={(0,\y)},color=black] (0.5pt,0pt) -- (-0.5pt,0pt) node[left] {\tiny $\y$};
\draw[color=black] (0pt,-2.2pt) node[right] {\tiny $0$};
\clip(-1.,-1.) rectangle (1.,1.);
\draw [color=red, line width=0.2pt,domain=-1.:1.] plot(\x,{(--0.6851599765428179--0.09989359996496527*\x)/1.1748611340918536});
\draw [color=red, line width=0.2pt,domain=-1.:1.] plot(\x,{(--0.6851601874515453-1.1748612995422902*\x)/-0.09989343451452859});
\draw [color=red, line width=0.2pt,domain=-1.:1.] plot(\x,{(--0.6851603426156989-0.09989357885759098*\x)/-1.1748614438853526});
\draw [color=red, line width=0.2pt,domain=-1.:1.] plot(\x,{(--0.6851601317069237--1.174861278434916*\x)/0.09989374430802767});
\draw [line width=0.2pt,domain=-1.:1.] plot(\x,{(--0.72222203829297-0.*\x)/1.201850272116265});
\draw [line width=0.2pt] (0.6009252125905417,-1.) -- (0.6009252125905417,1.);
\draw [line width=0.2pt,domain=-1.:1.] plot(\x,{(--0.7222224062150511-0.*\x)/-1.2018505782459408});
\draw [line width=0.2pt] (-0.6009252125905417,-1.) -- (-0.6009252125905417,1.);
\draw [rotate around={45.:(0.,0.)},line width=0.2pt] (0.,0.) ellipse (0.7071067811865478cm and 0.47140452079103157cm);
\begin{scriptsize}
\draw [fill=black] (0.6009252891229704,-0.6009252891229704) circle (0.3pt);
\draw[color=black] (0.65,-0.65) node {$D$};
\draw [fill=black] (-0.6009251360581325,0.6009251360581325) circle (0.3pt);
\draw[color=black] (-0.65,0.65) node {$A$};
\draw [fill=black] (-0.6009252891229704,-0.6009252891229704) circle (0.3pt);
\draw[color=black] (-0.55,-0.55) node {$C$};
\draw [fill=black] (0.6009251360581325,0.6009251360581325) circle (0.3pt);
\draw[color=black] (0.55,0.55) node {$B$};
\draw [fill=black] (0.5374839325138808,-0.5374839325138808) circle (0.3pt);
\draw[color=black] (0.49,-0.49) node {$G$};
\draw [fill=black] (-0.5374837670634441,0.5374837670634441) circle (0.3pt);
\draw[color=black] (-0.49,0.49) node {$E$};
\draw [fill=black] (-0.6373775113714718,-0.6373775113714718) circle (0.3pt);
\draw[color=black] (-0.69,-0.69) node {$H$};
\draw [fill=black] (0.6373773670284094,0.6373773670284094) circle (0.3pt);
\draw[color=black] (0.69,0.69) node {$F$};
\draw[color=black] (-0.28,0.28) node {$X$};
\end{scriptsize}
\end{tikzpicture}
\end{minipage}
\caption{The example of the hyperbola $X$.}\label{fig: ex hyperbola}
\end{figure}
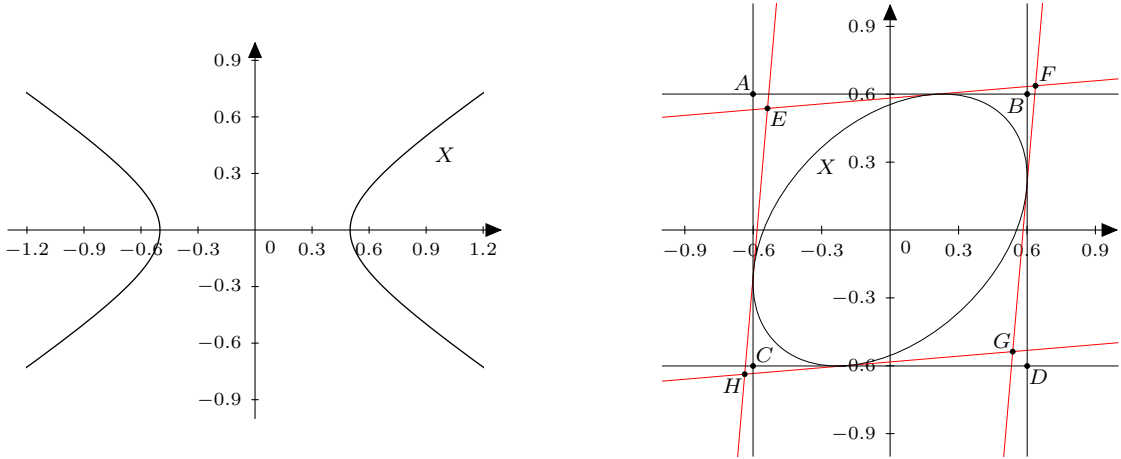

In order to display all these lines, we consider the change of coordinates of equations $z_1=-\sqrt{-1}\hspace{0.5mm}x_1, z_2=x_2+\sqrt{-1}\hspace{0.5mm}x_3, z_3=x_2-\sqrt{-1}\hspace{0.5mm}x_3.$ Then the image of $X$ is the ellipse of equation $13x^2-10xy+13y^2-4=0$. On the one hand, the points $A,B,C,D$ in Figure \ref{fig: ex hyperbola} generate the four lines corresponding to the second factor of $\mathrm{EDpoly}_{X,u}(0)$. Note that in the new coordinates the lines meeting $I$ and $J$ at infinity are the horizontal and vertical lines respectively. On the other hand, the union of the four lines generated by $E,F,G,H$ correspond to the second factor of $\mathrm{EDpoly}_{\overline{X},\overline{u}}(0)$.
\end{example}

For all $n\geq 1$ we define the integer
\begin{equation}\label{eq: def N}
N\coloneqq
\begin{cases}
2(n-1) & \mbox{if } d=2\\
d\frac{(d-1)^{n-1}-1}{d-2} & \mbox{if } d\geq 3\,.
\end{cases}
\end{equation}
If the hypersurface $X\subset\PP(V)$ is general, then by \cite[Corollary 2.10]{DHOST} $\mathrm{EDdegree}(X)=N$. For example, a general plane curve $C\subset\PP^2$ has $\mathrm{EDdegree}(C)=d^2$ and a general surface $S\subset\PP^3$ has $\mathrm{EDdegree}(S)=d(d^2-d+1)$. 

Things get more difficult if we allow $X$ to have isolated singularities. Recalling from \cite[Section 1.2.3]{D} that $\mu(Y,y)$ is the {\em Milnor number} of an isolated singularity $y$ of a complete intersection subvariety $Y\subset\PP(V)$, we have the following result (see \cite{Pie15}).
\begin{theorem}
Let $X\subset\PP(V)\cong\PP^{n-1}$ be a hypersurface of degree $d$. Suppose $X$ has only isolated singularities. For any point $x\in X_{\mathrm{sing}}$, let
\[
e(X,x)\coloneqq\mu(X,x)+\mu(H\cap X,x)\,,
\]
where $H$ is a general hyperplane section of $X$ containing $x$. Then
\begin{equation}\label{eq: EDdegree hypersurface with isolated singularities}
\mathrm{EDdegree}(X)=N-\sum_{x\in X_{\mathrm{sing}}}e(X,x)\,,
\end{equation}
where the integer $N$ has been defined in (\ref{eq: def N}).
\end{theorem}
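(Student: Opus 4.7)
The plan is to combine Aluffi's formula (\ref{eq:AluED}) for the ED degree in terms of Chern--Mather classes with the Parusinski--Pragacz description of the Chern--Mather class of a hypersurface with isolated singularities. Set $m=n-2=\dim X$. By Aluffi's formula, since $X$ is transversal to $Q$ off its singular locus (one can first perturb to ensure this, or apply (\ref{eq:AluED}) directly since the stated hypotheses in \cite{Alu} concern only Whitney stratifications),
\[
\mathrm{EDdegree}(X)=\sum_{i=0}^{m}(-1)^{i}(2^{m+1-i}-1)\,c_{i}^{\mathsmaller{M}}(X)\cdot h^{m-i}.
\]
The first step is to evaluate the right-hand side in the smooth case: using $c(X)=(1+h)^{n}/(1+dh)$ for a smooth degree-$d$ hypersurface in $\PP^{n-1}$, a direct computation with the binomial/geometric expansion gives exactly $N$ as defined in (\ref{eq: def N}). (This recovers the Catanese--Trifogli formula and agrees with \cite[Corollary 2.10]{DHOST}.)

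The second step is to compare $c^{\mathsmaller{M}}(X)$ with the ``virtual'' Chern class $c^{\mathrm{vir}}(X)$ that a smooth degree-$d$ hypersurface would have. For a hypersurface with only isolated singularities, this difference -- the so-called Milnor class -- is supported on $X_{\mathrm{sing}}$ and is a 0-cycle. The key input is the Parusinski--Pragacz formula identifying this 0-cycle: at each isolated singular point $x$, the Milnor class contributes $(-1)^{m}\,e(X,x)\,[x]$, where $e(X,x)=\mu(X,x)+\mu(H\cap X,x)$ is precisely the local invariant appearing in the statement. Equivalently, one can think of $e(X,x)$ as Parusinski's generalized Milnor number of an isolated hypersurface singularity.

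The third step is to plug this comparison into Aluffi's formula. Because the Milnor correction is concentrated in the top codegree, only the $i=m$ summand $(-1)^{m}(2^{1}-1)\,c_{m}^{\mathsmaller{M}}(X)$ is affected; the factor $(-1)^{m}(2-1)$ cancels the $(-1)^{m}$ in the Milnor contribution and produces a net $-\sum_{x\in X_{\mathrm{sing}}} e(X,x)$. Combined with $N$ from the smooth virtual part, this yields the stated identity (\ref{eq: EDdegree hypersurface with isolated singularities}).

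The main obstacle is step two: identifying the Milnor class contribution at an isolated hypersurface singularity as exactly $(-1)^{m}e(X,x)[x]$ with the correct sign. The cleanest route is to cite Parusinski--Pragacz (and Aluffi), but a self-contained alternative is a conservation-of-number argument. Choose a smoothing $X_{t}$ of $X$ in the linear system $|\mathcal{O}(d)|$, so that $X_{t}$ is smooth for $t\neq 0$ and $X_{0}=X$. For a \emph{fixed} generic $u\in V$, the $N$ complex critical points of $d_{u}$ on $X_{t,\mathrm{sm}}$ specialize as $t\to 0$ to a disjoint union of critical points of $d_{u}$ on $X_{\mathrm{sm}}$ together with finitely many clusters, one around each $x\in X_{\mathrm{sing}}$. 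A local analysis of the Lagrange system, using that $d_{u}$ restricts to a Morsification of $f$ near $x$ together with a generic hyperplane slice (accounting for the extra summand $\mu(H\cap X,x)$ coming from critical points that escape to the ``boundary'' between the normal bundle direction and the tangential directions), identifies the size of the cluster at $x$ as $\mu(X,x)+\mu(H\cap X,x)=e(X,x)$. This is the technically delicate point; the formula then follows by subtracting.
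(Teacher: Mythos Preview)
The paper does not prove this theorem; it is stated with a reference to Piene \cite{P}, whose argument proceeds via the generalized Pl\"ucker--Teissier formulas for polar classes. Your route through Aluffi's formula (\ref{eq:AluED}) is a legitimate alternative, and the reduction to ``only $c_m^{\mathsmaller{M}}(X)$ differs from the virtual value'' is the right organizing principle. Indeed, once one knows that $c_j^{\mathsmaller{M}}(X)\cdot h^{m-j}$ agrees with the smooth value for $j<m$ and that $c_m^{\mathsmaller{M}}(X)-c_m^{\mathrm{vir}}(X)=(-1)^{m+1}\sum_x e(X,x)$, plugging into (\ref{eq:AluED}) immediately yields the statement.

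There is, however, a genuine gap in your step two. What you call ``the Milnor class'' is by standard convention the difference $c^{\mathsmaller{SM}}(X)-c^{\mathrm{vir}}(X)$ between the Chern--Schwartz--MacPherson class and the Fulton (virtual) class, \emph{not} $c^{\mathsmaller{M}}(X)-c^{\mathrm{vir}}(X)$. For an isolated hypersurface singularity the Milnor class contributes only $(-1)^{m}\mu(X,x)[x]$, not $(-1)^{m}e(X,x)[x]$; Parusinski--Pragacz give you the former. The extra term $\mu(H\cap X,x)$ enters through the separate identity $c^{\mathsmaller{SM}}(X)=c^{\mathsmaller{M}}(X)+\sum_x (1-\mathrm{Eu}_X(x))[x]$, together with the L\^e--Teissier formula $\mathrm{Eu}_X(x)=1+(-1)^{m+1}\mu(H\cap X,x)$ for the local Euler obstruction of an isolated hypersurface singularity. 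Combining these two corrections (and keeping track of signs) does produce $c_m^{\mathsmaller{M}}-c_m^{\mathrm{vir}}=(-1)^{m+1}\sum_x e(X,x)$, but your write-up conflates the two steps and attributes the wrong formula to Parusinski--Pragacz. The ``conservation of number'' sketch at the end does not repair this: the claim that exactly $\mu+\mu'$ critical points collide at each singular point is precisely the content to be proved, and the heuristic about ``critical points escaping to the boundary'' is not an argument.

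A cleaner fix, closer to Piene's original approach, bypasses CSM classes entirely: use the Teissier--Piene class formula $\delta_0(X)=d(d-1)^{m}-\sum_x e(X,x)$ together with the observation that $\delta_i(X)$ for $i\ge 1$ equals the smooth value (a general codimension-$i$ linear section avoids $X_{\mathrm{sing}}$), then sum via (\ref{eq: EDdegree polar classes}).
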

It is known (see \cite[Example 1.2.3]{D}) that if $x\in X$ is a singular point of type $A_k$, then
\[
\mu(X,x)=k,\quad \mu(H\cap X,x)=1\,.
\]
This gives the formula of the ED degree of a hypersurface with $s$ singularities of type $A_{k_1},\ldots,A_{k_s}$
\[
\mathrm{EDdegree}(X)=N-(k_1+1)-\cdots-(k_s+1)\,.
\]
In particular, if we consider a plane curve $C$ of degree $d$ with with $\delta$ ordinary nodes and $\kappa$ ordinary cusps, the ED degree of $C$ is
\begin{equation}\label{eq: ED degree curve nodes cusps}
\mathrm{EDdegree}(C)=d^2-2\delta-3\kappa\,.
\end{equation}

\begin{example}\label{ex: cardioid}
Let $C\subset\R^2$ be the real affine cardioid of equation $(x^2+y^2-2x)^2-4(x^2+y^2)=0$. It has a cusp at the origin and at the isotropic points at $H_{\infty}$. Hence, according to the formula (\ref{eq: ED degree curve nodes cusps}), $\mathrm{EDdegree}(\overline{C})=16-3\times 3=7$. On the other hand, $\mathrm{EDdegree}(C)=3$: the drop is caused essentially by the non-transversality with $Q_{\infty}$.
Computing the ED polynomial of $C$, one may observe that its leading coefficient is $(x-1)^2+y^2$, namely one branch of the distance function diverges when the chosen data point is $u=(1,0)$. It is interesting to note that the projective embedding of $(1,0)$ is the meeting point of the three tangent cones at the three cusps of $\overline{C}$. Moreover, the cardioid $C$ is the trace left by a point, initially at the origin, on the perimeter of a circle of radius $1$ that is rolling around the circle centered in $(1,0)$ of the same radius. In Figure \ref{fig:cardioid} we see that the ED discriminants $\Sigma_C$ and  $\Sigma_{\overline{C}}$ are dramatically different. While the real part of $\Sigma_C$ is again a cardioid, the real part of $\Sigma_{\overline{C}}$ divides the plane in four connected components, one of them is shown in the detail on the right of Figure  \ref{fig:cardioid}.

\begin{figure}[htbp]
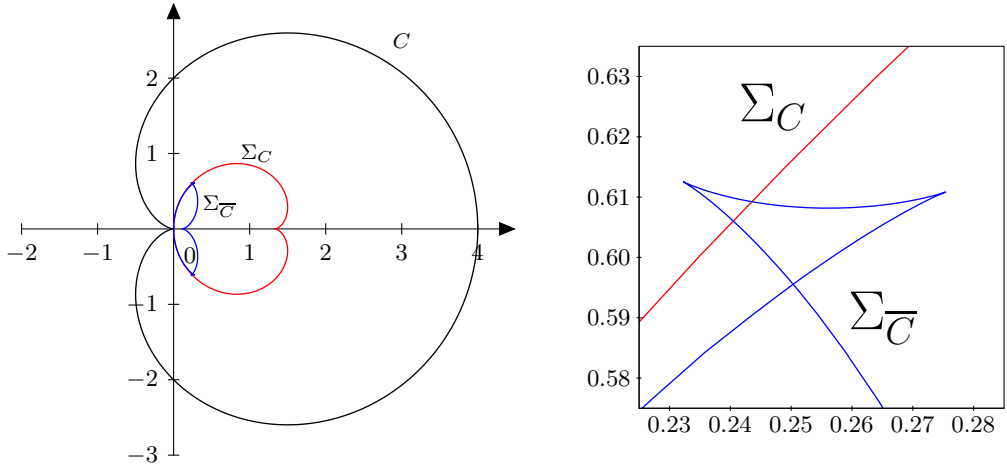

\centering
\begin{minipage}[c]{.40\textwidth}

\end{minipage}
\caption{The cardioid $C$ with its ED dscriminant $\Sigma_C$ and the restriction to the affine plane of the ED discriminant $\Sigma_{\overline{C}}$ of its projectivization $\overline{C}$. A detail of $\Sigma_{\overline{C}}$ on the right.}\label{fig:cardioid}
\end{figure}
\end{example}

A remarkable example when formula (\ref{eq: EDdegree polar classes}) cannot be used is the case of symmetric tensors of fixed degree.

\begin{example}[The ED polynomial of the dual of the Veronese variety]\label{ex: ED poly dual Veronese}
First we clarify our notation. Let $W_\R$ be a real $n$-dimensional Euclidean space endowed with a positive definite quadratic form $\tilde{q}$ and let $W\coloneqq W_\R\otimes\C$ be its associated complex vector space. Define $V_\R\coloneqq\mathrm{Sym}^dW_\R$. We set $x_1,\ldots,x_n$ and $a_{i_1\cdots i_{n}}$ with $i_1+\cdots+i_n=d$ as coordinates of $W_\R$ and $V_\R$, respectively. Each element $f\in V=V_\R\otimes\C$ can be interpreted, up to scalars, as a homogeneous polynomial $f\in\C[x_1,\ldots,x_n]_d$, or equivalently as a degree $d$ symmetric tensor on $W$. The image of the map
\[
v_{n,d}\colon W_\R\to V_\R,\quad v_{n,d}(x_1,\ldots,x_n)\coloneqq(x_1^{i_1}\cdots x_n^{i_n})_{i_1+\cdots+i_n=d}
\]
is the cone over the {\em $d$-th Veronese variety} of $\PP(W_\R)$. We will denote this cone by $X_\R\subset V_\R$ and its complexification by $X\subset V$. The variety $X$ consists of $d$-th powers of linear polynomials or, in other words, rank one symmetric tensors on $W$. Its dual $X^\vee$ is the well-known {\em discriminant hypersurface}, namely the variety defined by the vanishing of the classical multivariate discriminant $\Delta_d$ of a degree $d$ homogeneous polynomial on $W$ (see \cite[\rom{13}]{GKZ}).

If we make $V_\R$ an Euclidean space with a positive definite quadratic form $q$ that creates a transverse intersection between $X$ and the isotropic quadric $Q$, we have the identity (see \cite[Proposition 7.10]{DHOST})
\begin{equation}\label{eq: ED degree Veronese variety}
\mathrm{EDdegree}(X)=\frac{(2d-1)^n-(d-1)^n}{d}\,.
\end{equation}
Nevertheless, we want the Euclidean distance on $V_\R$ to be compatible with the action of the group $SO(W)$. To this end, the good choice for $Q\subset\PP(V)$ is the following:
\[
Q\colon\sum_{i_1+\cdots+i_n=d}\binom{d}{i_1,\ldots,i_n}a_{i_1\cdots i_n}^2=0\,.
\]
With this choice, one verifies immediately that $q(v_{n,d}(x))=\tilde{q}(x)^d$ for all $x\in W$. However, this choice causes a drop in the ED degree of $X$ (see \cite[Corollary 8.7]{DHOST}):
\begin{equation}\label{eq: ED degree Veronese variety, good quadratic form}
\mathrm{EDdegree}(X)=\frac{(d-1)^n-1}{d-2}\,.
\end{equation}
Indeed, in this case $X\cap Q$ is a non-reduced variety of multiplicity $d$. This non-transversality is confirmed by the fact that the identity of Theorem \ref{thm:2EDdegree} cannot hold, since the integer $c_{n,d}=\mathrm{deg}(X^\vee)-\mathrm{EDdegree}(X^\vee)$ vanishes for $d=2$ and is positive when $d>2$ for all $n\geq 1$, as pointed out by the second author in \cite{Sod}.

The ED polynomial of the hypersurface $X^\vee$ is studied in detail in \cite{Sod}. In this setting, given $f\in V_\R$, any critical point of the Euclidean distance function $d_f$ on $X_\R$ is called a \emph{critical rank one symmetric tensor} for $f$. The case $d=2$ deals with real symmetric matrices. For any symmetric matrix $U\in V$, we have the identity
\begin{equation}\label{eq: EDpoly d=2}
\mathrm{EDpoly}_{X^\vee,U}(t^2)=\psi_U(t)\psi_U(-t)=\det(U-tI_n)\det(U+tI_n)\,,
\end{equation}
where $\psi_U(t)=\det(U-tI_n)$ is the characteristic polynomial of $U$. In particular, the lowest term of $\mathrm{EDpoly}_{X^\vee,U}$ is the square of $\det(U)$. The classic notions of eigenvalue and eigenvector of a symmetric matrix have been extended by Lek-Heng Lim and Liqun Qi in \cite{L,Q} to any value of $d$ (and, more in general, for any $n$-dimensional tensor of order $d$ non necessarily symmetric). In particular, they introduced the {\em E-eigenvectors} and {\em E-eigenvalues} of a symmetric tensor. We follow the notation used in \cite[Definition 1.1]{Sod}.

A striking interpretation of the critical rank one symmetric tensors for $f\in V_\R$ is contained in the following result.

\begin{theorem}[\cite{L}, variational principle]\label{thm: Lim}
Given $f\in V_\R$, the critical rank one symmetric tensors for $f$ are exactly of the form $x^d$, where $x=(x_1,\ldots,x_n)$ is an eigenvector of $f$.
\end{theorem}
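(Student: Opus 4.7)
The plan is to characterize the critical points of the squared distance function $d_f$ on the cone $X_{\mathrm{sm}}=\{x^d:x\in W\setminus\{0\}\}$ via a Lagrange multiplier computation, and to recognize the resulting equation as Lim and Qi's E-eigenvector equation on $W$. Concretely, since $v_{n,d}\colon W\to V$ is a polynomial map with differential $y\mapsto d\,x^{d-1}y$ at $x\neq 0$, the tangent space to $X$ at $x^d$ is
\[
T_{x^d}X=\bigl\{\,x^{d-1}y : y\in W\,\bigr\}\subset\mathrm{Sym}^d W,
\]
and the critical condition $f-x^d\perp_q T_{x^d}X$ for $d_f$ becomes $q(f-x^d,\,x^{d-1}y)=0$ for every $y\in W$.

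The key input is the structural property of the $SO(W)$-invariant form $q$ chosen in Example~\ref{ex: ED poly dual Veronese}: with the normalization by multinomial coefficients, $q$ is exactly the $d$-th symmetric power of $\tilde q$, so that on decomposable elements $q(x^d,y^d)=\tilde q(x,y)^d$. Polarizing once in the second argument yields the contraction identity
\[
q(g,\,x^{d-1}y)=\tilde q\bigl(g\cdot x^{d-1},\,y\bigr)\qquad\forall\,g\in\mathrm{Sym}^d W,\ x,y\in W,
\]
where $g\cdot x^{d-1}\in W$ denotes the vector obtained by contracting $g$ with $d-1$ copies of $x$. Specializing to $g=x^d$ gives $q(x^d,x^{d-1}y)=\tilde q(x)^{d-1}\tilde q(x,y)$, so the critical condition rewrites as $\tilde q\bigl(f\cdot x^{d-1}-\tilde q(x)^{d-1}x,\,y\bigr)=0$ for all $y\in W$, and non-degeneracy of $\tilde q$ collapses this to the eigenequation
\[
f\cdot x^{d-1}=\tilde q(x)^{d-1}\,x,
\]
which is precisely Lim's E-eigenequation for $f$ with E-eigenvalue $\lambda=\tilde q(x)^{d-1}$.

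For the converse, given any E-eigenvector $x_0$ with $f\cdot x_0^{d-1}=\lambda_0 x_0$, the rescaling $x=tx_0$ satisfies the critical condition if and only if $t^d=\lambda_0/\tilde q(x_0)^{d-1}$; since $x^d$ depends on $t$ only through $t^d$, a unique rank-one critical tensor is associated to the E-eigenvector $x_0$. The main obstacle is establishing the contraction identity above: it is the standard compatibility between the $d$-th symmetric power inner product and apolar contraction, but it depends crucially on the specific normalization of $q$ used in Example~\ref{ex: ED poly dual Veronese}. Once that is in hand, the rest is a direct Lagrange-multiplier calculation together with bookkeeping of the $\mathbb{C}^*$-action on the cone $X$.
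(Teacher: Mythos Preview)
The paper does not supply its own proof of this theorem: it is quoted as Lim's variational principle with a bare citation to \cite{L}, and the text immediately passes to consequences. Your argument is correct and is precisely the natural one --- a Lagrange-multiplier computation on the cone $X=\{x^d:x\in W\setminus\{0\}\}$, using that the $SO(W)$-invariant form $q$ on $\mathrm{Sym}^d W$ is the $d$-th symmetric power of $\tilde q$, so that the contraction identity $q(g,x^{d-1}y)=\tilde q(g\cdot x^{d-1},y)$ holds. This reduces criticality of $x^d$ for $d_f$ to the E-eigenequation $f\cdot x^{d-1}=\tilde q(x)^{d-1}x$, which is exactly how the result is obtained in \cite{L}.

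One small point worth making explicit in your write-up: your rescaling equation $t^d=\lambda_0/\tilde q(x_0)^{d-1}$ shows that an E-eigenvector with eigenvalue $\lambda_0=0$ yields $t=0$ and hence no critical rank-one tensor in $X_{\mathrm{sm}}$. So the correspondence is really between critical rank-one tensors and E-eigenpairs with nonzero E-eigenvalue; depending on the convention adopted for E-eigenvectors (the paper defers to \cite[Definition~1.1]{Sod}) this may or may not already be built in, but it is harmless to state it.
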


A consequence of this result is that every symmetric tensor $f\in V$ has at most $\mathrm{EDdegree}(X)$ (see formula (\ref{eq: ED degree Veronese variety, good quadratic form})) distinct E-eigenvalues when $d$ is even, and at most $\mathrm{EDdegree}(X)$ pairs $(\lambda,-\lambda)$ of distinct E-eigenvalues when $d$ is odd. This bound is attained for general symmetric tensors (see \cite{CS}).

The characterization of critical rank one symmetric tensors given in Theorem \ref{thm: Lim} is used by Draisma, Ottaviani and Tocino in \cite{DOT}, where they deal more in general with the \emph{best rank $k$ approximation problem} for tensors. In particular, it is shown in \cite{DOT} that the critical rank one tensors (more generally, the critical rank $k$ tensors) are contained in a subspace. This fact is false for general varieties, because looking at Example \ref{ex: ED polynomial affine conic}, one can verify immediately that the four critical points of an ellipse are not aligned.

As the eigenvalues of a symmetric matrix are the roots of its characteristic polynomial, the E-eigenvalues of a symmetric tensor $f\in V$ can be computed via its {\em E-characteristic polynomial} $\psi_f$ (see \cite[Definition 1.2]{Sod}). When $d$ is even, the E-characteristic polynomial of $f=f(x)\in V_\R$ is, up to a scalar,
\begin{equation}\label{eq: characteristic polynomial discr even case}
\psi_f(t)\coloneqq\Delta_d\left(f(x)-t\tilde{q}(x)^{d/2}\right)\,,
\end{equation}
where we recall that $\tilde{q}$ is the quadratic form of $W_\R$. On the other hand, a relation equivalent to (\ref{eq: characteristic polynomial discr even case}) is no longer possible for $d$ odd.

It is shown in \cite{Sod} that, for any value of $d$, the lowest term of $\mathrm{EDpoly}_{X^\vee,f}(t^2)$ is the square of $\Delta_d(f)$, namely the polynomial defining the hypersurface $X^\vee$. Therefore the integer $c_{n,d}$ introduced previously is exactly the degree of the leading coefficient of $\mathrm{EDpoly}_{X^\vee,f}(t^2)$. Hence the distance function $d_f$ on $X^\vee$ is not integral for all $n\geq 1$ and $d>2$. In particular, the hypersurface cut out by the leading coefficient of $\mathrm{EDpoly}_{X^\vee,f}(t^2)$ is the dual of the reduced variety associated to $X\cap Q$, that is the dual of the embedding in $V_\R$ of the isotropic quadric of $W_\R$. These facts combined together lead to a closed formula (see \cite[Main Theorem]{Sod}) for the product of the E-eigenvalues of a symmetric tensor, which generalizes to the class of symmetric tensors the known fact that the determinant of a symmetric matrix is the product of its eigenvalues.

In particular, the ED polynomial of $X^\vee$ has a nice factorization which generalizes the formula (\ref{eq: EDpoly d=2}) to an arbitrary even degree:
\[
\mathrm{EDpoly}_{X^\vee,f}(t^2)=\psi_f(t)\psi_f(-t)=\Delta_d\left(f(x)-t\tilde{q}(x)^{d/2}\right)\Delta_d\left(f(x)+t\tilde{q}(x)^{d/2}\right)\,.
\]
Properties like the above-mentioned one are studied in a paper in preparation by the second author in the more general setting of non-symmetric tensors and in particular in the case of boundary format tensors.
\end{example}

\section{The case of matrices}
Consider the vector space $M_{m,n}$ of $m\times n$ matrices with $m\leq n$, with natural bilinear form defined by $q(U,T)\coloneqq\mathrm{tr}(UT^\mathsmaller{T})$ for all $U,T\in M_{m,n}$. This form induces the Frobenius norm $q(U)^2=\sum_{i,j}u_{ij}^2$. For any data point $U\in M_{m,n}$, we can consider its {\em singular value decomposition}
\begin{equation}\label{eq: SVD of U}
U=T_1\cdot\Sigma\cdot T_2\,,
\end{equation}
where $\Sigma=\mathrm{diag}(\sigma_1,\ldots,\sigma_m)$ and $\sigma_1>\ldots>\sigma_m$ are the singular values of $U$, while $T_1, T_2$ are orthogonal matrices of format $m\times m$ and $n\times n$ respectively. For any $r\in\{1,\ldots,m\}$, let $X_r\subset M_{m,n}$ be the variety of $m\times n$ matrices of rank $\leq r$. The Eckart-Joung Theorem states that the critical points of the distance function $d_U$ from $X_r$ are of the form
\[
U=T_1\cdot(\Sigma_{i_1}+\cdots+\Sigma_{i_r})\cdot T_2\,,
\]
where $\Sigma_j=\mathrm{diag}(0,\ldots,0,\sigma_j,0,\ldots,0)$ for all $j\in\{1,\ldots,m\}$, and $I=\{i_1<\cdots<i_r\}$ runs over all $r$-element subsets of $\{1,\ldots,m\}$. In particular we have that $\mathrm{EDdegree}(X_r)=\binom{m}{r}$. With a bit of computation, applying Proposition \ref{pro:roots charpoly} and using equation (\ref{eq: SVD of U}), the ED polynomial of $X_r$ can be written (up to a constant factor) as (see also \cite[Example 2.9]{HW})
\begin{align*}
\mathrm{EDpoly}_{X_r,U}(t^2) & = \prod_{1\leq i_1<\cdots<i_r\leq m}\left(t^2-\mathrm{tr}\{[\Sigma-(\Sigma_{i_1}+\cdots+\Sigma_{i_r})]^2\}\right)\\
& = \prod_{1\leq i_1<\cdots<i_r\leq m}\left[t^2-(\sigma_1^2+\cdots+\widehat{\sigma_{i_1}^2}+\cdots+\widehat{\sigma_{i_r}^2}+\cdots+\sigma_m^2)\right].
\end{align*}
In particular, for $r=m-1$ the last formula simplifies as in the following result. We recall that the dual variety of $X_r$ is $(X_r)^\vee=X_{m-r}$ for all $r\in\{1,\ldots,m-1\}$.
\begin{theorem}\label{thm: EDpoly matrices}
Let $X=X_1\subset M_{m,n}$ be the cone of rank one matrices. Then
\begin{align*}
\mathrm{EDpoly}_{X^\vee,U}(t^2) & =\det\left(UU^\mathsmaller{T}-t^2 I_m\right)\\
\mathrm{EDpoly}_{X,U}(t^2) & =(-1)^m\det\left[t^2I_m+UU^\mathsmaller{T}-\mathrm{tr}(UU^\mathsmaller{T})I_m\right].
\end{align*}
\end{theorem}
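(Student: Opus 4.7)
The plan is to specialize the explicit formula for $\mathrm{EDpoly}_{X_r,U}(t^2)$ proved in the paragraph immediately before the theorem, namely
\[
\mathrm{EDpoly}_{X_r,U}(t^2)\;=\;\prod_{1\le i_1<\cdots<i_r\le m}\Bigl(t^2-\sum_{j\notin\{i_1,\ldots,i_r\}}\sigma_j^2\Bigr)
\]
(valid up to a nonzero scalar, via Proposition \ref{pro:roots charpoly} together with the Eckart--Young description of the critical points of $d_U$ on $X_r$), to the two extreme cases $r=m-1$ and $r=1$. Recall that $X^\vee=X_{m-1}$ and that $\sigma_1^2,\ldots,\sigma_m^2$ are precisely the eigenvalues of the symmetric matrix $UU^\mathsmaller{T}$, so any product of the form $\prod_k(t^2-\lambda_k)$ with $\{\lambda_k\}$ an explicit polynomial expression in these eigenvalues will be recognized as a determinant in $UU^\mathsmaller{T}$.

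For the first identity I would set $r=m-1$: each index set $\{i_1,\ldots,i_{m-1}\}$ omits exactly one $k\in\{1,\ldots,m\}$, so the inner sum collapses to $\sigma_k^2$, and the product becomes the characteristic polynomial of $UU^\mathsmaller{T}$ evaluated at $t^2$:
\[
\prod_{k=1}^m(t^2-\sigma_k^2)\;=\;\det(t^2 I_m-UU^\mathsmaller{T})\;=\;(-1)^m\det(UU^\mathsmaller{T}-t^2 I_m),
\]
which matches the stated formula up to the scalar $(-1)^m$. For the second identity I would set $r=1$, introduce $s\coloneqq\mathrm{tr}(UU^\mathsmaller{T})=\sum_j\sigma_j^2$, and observe that each root is $s-\sigma_i^2$. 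Since the numbers $t^2+\sigma_i^2-s=t^2-(s-\sigma_i^2)$ are exactly the eigenvalues of $t^2 I_m+UU^\mathsmaller{T}-sI_m$, their product equals the determinant of that matrix, giving $\mathrm{EDpoly}_{X,U}(t^2)=\det(t^2 I_m+UU^\mathsmaller{T}-\mathrm{tr}(UU^\mathsmaller{T})I_m)$; inserting the admissible overall scalar $(-1)^m$ yields the stated form.

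The only delicate point is bookkeeping: since by Definition \ref{def:psi} the ED polynomial is defined only up to a nonzero scalar, I should be explicit about absorbing the factor $(-1)^m$ into that ambiguity rather than claiming a literal equality, and I should check that the Eckart--Young list of critical points is complete on $(X_r)_{\mathrm{sm}}$, so that Proposition \ref{pro:roots charpoly} really accounts for all $\binom{m}{r}=\mathrm{EDdegree}(X_r)$ roots with the correct multiplicity and no spurious factors enter the product. Once this is settled, no further input is required: both identities follow by pure linear algebra (recognizing a product of shifted eigenvalues as a determinant).
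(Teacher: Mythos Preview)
Your proposal is correct and follows essentially the same route as the paper: the theorem is stated immediately after the general product formula for $\mathrm{EDpoly}_{X_r,U}(t^2)$ and is introduced with ``In particular, for $r=m-1$ the last formula simplifies as in the following result,'' together with the reminder that $(X_r)^\vee=X_{m-r}$. The paper therefore expects exactly the specialization you carry out, namely recognizing $\prod_k(t^2-\sigma_k^2)$ and $\prod_i(t^2-(s-\sigma_i^2))$ as determinants in $UU^\mathsmaller{T}$; your handling of the scalar $(-1)^m$ via the up-to-scalar convention of Definition~\ref{def:psi} is also the right move (and one checks that with the stated sign both formulas have the same leading coefficient $(-1)^m t^{2m}$, consistent with Theorem~\ref{thm:EDduality}).
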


\begin{example}\label{exa:ortoinvariant} Let ${\mathcal M}\subset{\mathbb R}^{m\times n}$ be an orthogonally invariant matrix variety as in \cite{DLOT} and let $S=\{x\mid\mathrm{Diag}(x)\in {\mathcal M}\}$ its diagonal restriction.
Without loss of generality, suppose $m\leq n$.
Let $\sigma(U)=\left(\sigma_1(U),\ldots \sigma_m(U)\right)$ be the set of singular values of a matrix $U\in{\mathbb R}^{m\times n}$.

Then the Theorem 4.11 of \cite{DLOT} may be reformulated as follows.
$$\mathrm{EDpoly}_{{\mathcal M},U}(t^2)=\mathrm{EDpoly}_{S,\sigma(U)}(t^2).$$
Note that this correspondence preserves duality, that is
$$\mathrm{EDpoly}_{{\mathcal M}^\vee,U}(t^2)=\mathrm{EDpoly}_{S^\vee,\sigma(U)}(t^2).$$

For example, the ED polynomial of the essential variety ${\mathcal E}\subset{\mathbb C}^{3\times 3}$ in computer vision (see
\cite[Example 2.7]{DLOT}) is
\[
\mathrm{EDpoly}_{{\mathcal E},U}(t^2)=\prod_{(ijk)\textrm{\ cyclic perm. of\ }\{1,2,3\}}\left[t^2-(\sigma_i-\sigma_j)^2/2-\sigma_k^2\right]\left[t^2-(\sigma_i+\sigma_j)^2/2-\sigma_k^2\right]\,,
\]
where $\sigma_1$, $\sigma_2$, $\sigma_3$ are the singular values of the $3\times 3$ matrix $U$. Note that neither ${\mathcal E}$ nor ${\mathcal E}^\vee$ is a hypersurface, while the varieties $\left({\mathcal E}^\vee\cap Q\right)^\vee$ and $\left({\mathcal E}\cap Q\right)^\vee$ are both reduced. Therefore $\left({\mathcal E}^\vee\cap Q\right)^\vee$ and $\left({\mathcal E}\cap Q\right)^\vee$ are both hypersurfaces by Corollary \ref{cor:constterm for cones}.

The lowest term $\mathrm{EDpoly}_{{\mathcal E},U}(0)$,
which is the equation of $\left({\mathcal E}^\vee\cap Q\right)^\vee$, is equal to
\[
4a_1^6-12a_1^4a_2-15a_1^2a_2^2+144a_1^3a_3-4a_2^3-90a_1a_2a_3-27a_3^2\,,
\]
where $a_1, a_2, a_3$ are defined by the equation
$\det(UU^\mathsmaller{T}+\lambda I)=\lambda^3+\lambda^2a_1+\lambda a_2+a_3$.
The lowest term of the ED polynomial for the dual variety, namely $\mathrm{EDpoly}_{{\mathcal E}^\vee,U}(0)$, is equal to the discriminant
\[
a_1^2a_2^2-4a_1^3a_3-4a_2^3+18a_1a_2a_3-27a_3^2\,,
\]
which is the equation of $\left({\mathcal E}\cap Q\right)^\vee$.
\end{example}

\section*{Acknowledgements}
We thank Paolo Aluffi who showed us how Theorem \ref{thm:2EDdegree}, that we proved originally in the smooth case, can be extended to any variety using Chern-Mather classes.
The first author is indebted to J.~Draisma, E.~Horobe\c{t}, B.~Sturmfels, R.~Thomas, his coauthors of \cite{DHOST}, where the EDphilosophy was introduced first.
Both authors are members of INDAM-GNSAGA.

\end{document}